\numberwithin{equation}{section}
\newtheorem{theorem}{Theorem}[section]
\newtheorem{lemma}[theorem]{Lemma}
\newtheorem{corollary}[theorem]{Corollary}
\newtheorem{proposition}[theorem]{Proposition}
\newtheorem{remark}[theorem]{Remark}
\theoremstyle{definition}
\newtheorem{definition}[theorem]{Definition}
\newtheorem{assumption}[theorem]{Assumption}
\newtheorem{example}[theorem]{Example}
\begin{document}

\def\be{\begin{eqnarray}}
\def\ee{\end{eqnarray}}
\def\p{\partial}
\def\no{\nonumber}
\def\e{\epsilon}
\def\de{\delta}
\def\De{\Delta}
\def\om{\omega}
\def\Om{\Omega}
\def\f{\frac}
\def\th{\theta}
\def\la{\lambda}
\def\lab{\label}
\def\b{\bigg}
\def\var{\varphi}
\def\na{\nabla}
\def\ka{\kappa}
\def\al{\alpha}
\def\La{\Lambda}
\def\ga{\gamma}
\def\Ga{\Gamma}
\def\ti{\tilde}
\def\wti{\widetilde}
\def\wh{\widehat}
\def\ol{\overline}
\def\ul{\underline}
\def\Th{\Theta}
\def\si{\sigma}
\def\Si{\Sigma}
\def\oo{\infty}
\def\q{\quad}
\def\z{\zeta}
\def\co{\coloneqq}
\def\eqq{\eqqcolon}
\def\bt{\begin{theorem}}
\def\et{\end{theorem}}
\def\bc{\begin{corollary}}
\def\ec{\end{corollary}}
\def\bl{\begin{lemma}}
\def\el{\end{lemma}}
\def\bp{\begin{proposition}}
\def\ep{\end{proposition}}
\def\br{\begin{remark}}
\def\er{\end{remark}}
\def\bd{\begin{definition}}
\def\ed{\end{definition}}
\def\bpf{\begin{proof}}
\def\epf{\end{proof}}
\def\bex{\begin{example}}
\def\eex{\end{example}}
\def\bq{\begin{question}}
\def\eq{\end{question}}
\def\bas{\begin{assumption}}
\def\eas{\end{assumption}}
\def\ber{\begin{exercise}}
\def\eer{\end{exercise}}
\def\mb{\mathbb}
\def\mbR{\mb{R}}
\def\mbZ{\mb{Z}}
\def\mc{\mathcal}
\def\mcS{\mc{S}}
\def\ms{\mathscr}
\def\lan{\langle}
\def\ran{\rangle}
\def\lb{\llbracket}
\def\rb{\rrbracket}

\title{Smooth transonic flows with nonzero vorticity to a quasi two dimensional steady Euler flow model}

\author{Shangkun Weng\thanks{School of Mathematics and Statistics, Wuhan University, Wuhan, Hubei Province, China, 430072. Email: skweng@whu.edu.cn}\and Zhouping Xin\thanks{The Institute of Mathematical Sciences, The Chinese University of Hong Kong, Shatin, NT, Hong Kong. E-mail: zpxin@ims.cuhk.edu.hk}}

\date{}

\pagestyle{myheadings} \markboth{Smooth transonic flows to the quasi two dimensional model}{Smooth transonic flows to the quasi two dimensional model}\maketitle

\begin{abstract}
  This paper concerns studies on smooth transonic flows with nonzero vorticity in De Laval nozzles for a quasi two dimensional steady Euler flow model which is a generalization of the classical quasi one dimensional model. First, the existence and uniqueness of smooth transonic flows to the quasi one-dimensional model, which start from a subsonic state at the entrance and accelerate to reach a sonic state at the throat and then become supersonic are proved by a reduction of degeneracy of the velocity near the sonic point and the implicit function theorem. These flows can have positive or zero acceleration at their sonic points and the degeneracy types near the sonic point are classified precisely. We then establish the structural stability of the smooth one dimensional transonic flow with positive acceleration at the sonic point for the quasi two dimensional steady Euler flow model under small perturbations of suitable boundary conditions, which yields the existence and uniqueness of a class of smooth transonic flows with nonzero vorticity and positive acceleration to the quasi two dimensional model. The positive acceleration of the one dimensional transonic solutions plays an important role in searching for an appropriate multiplier for the linearized second order mixed type equations. A deformation-curl decomposition for the quasi two dimensional model is utilized to deal with the transonic flows with nonzero vorticity.
\end{abstract}

\begin{center}
\begin{minipage}{5.5in}
Mathematics Subject Classifications 2020: 76H05, 35M12,76N10, 76N15, 35L67.\\
Key words: smooth transonic flow, positive acceleration, the quasi two dimensional steady Euler flow model, vorticity, singular perturbation, deformation-curl decomposition.
\end{minipage}
\end{center}

\section{Introduction and the main results}
\subsection{The motivations}\noindent

This paper concerns transonic flows occurring in inviscid compressible fluids, where the flow contains both subsonic and supersonic region. A general transonic flow may contain a shock, and an upstream supersonic flow immediately turns to subsonic after crossing the shock surface. Morawetz \cite{Morawetz1956} proved the nonexistence of a smooth solution to the perturbation for flow with a local supersonic region over a solid airfoil and is unstable even it exists. Here we focus on the smooth transonic flows for inviscid compressible fluids in a nozzle. There are two types of smooth transonic flows called Taylor and Meyer types in a De Laval nozzle whose cross section converges first and then diverges. For a Taylor type transonic flow, there are supersonic enclosures attached to the nozzle wall, and it was shown that such a smooth transonic flow does not exist in general and is unstable under small perturbations of the shape of the nozzle even it exists (see \cite{Bers1958}). For Meyer type transonic flows, the flow accelerates from subsonic and smoothly pass through the sonic surface and finally becomes supersonic, the sonic surface is observed in experiment to be located near the throat of the nozzle.

Recently, Wang and Xin \cite{WX2013,WX2016,WX2019,WX2020} have made progresses on Meyer type transonic flows and established the existence and uniqueness of such kind of transonic flows satisfying some physical boundary conditions on the De Laval nozzle for two dimensional steady irrotational compressible Euler equations. They have shown that the sonic points can locate only at the throat of the nozzle and the points on the nozzle wall with positive curvature, under the assumption that the nozzle walls  are required to be suitably flat at the throat in \cite{WX2019,WX2020}. Such a flatness condition near the throat is almost necessary as was shown in \cite{WX2019}. Moreover, the velocity constructed in \cite{WX2019} is along the $x_1$-axis and the acceleration must be zero at the throat where the flow becomes sonic. However, the methods developed in \cite{WX2019,WX2020} depend crucially on that the flow is irrotational and the nozzle is suitably flat at the throat and it seems quite difficult to extend the approach to the steady Euler flows and general De Laval nozzles. One natural question arises:

{\bf Problem.} \textit{Do there exist smooth transonic flows of Meyer type in De Laval nozzles with nonzero vorticity and positive acceleration at the set of sonic points near the throat of the nozzle?}

\subsection{A quasi two dimensional Euler flow model}\label{model}\noindent

To answer the {\bf Problem}, we propose to study the corresponding problem for a quasi two dimensional steady Euler flow model which is derived as follows. Consider the steady three dimensional isentropic Euler equations in a slowly varying nozzle $\mathbb{D}$:
\be\label{3deuler}\begin{cases}
\p_{y_1}(\varrho v_1) + \p_{y_2}(\varrho v_2) +\p_{y_3}(\varrho v_3)=0, \\
\varrho v_1 \p_{y_1} v_1 + \varrho v_2 \p_{y_2} v_1 + \varrho v_3 \p_{y_3} v_1 + \p_{y_1} P(\varrho)=0,\\
\varrho v_1 \p_{y_1} v_2 + \varrho v_2 \p_{y_2} v_2 + \varrho v_3 \p_{y_3} v_2 + \p_{y_2} P(\varrho)=0,\\
\varrho v_1 \p_{y_1} v_3 + \varrho v_2 \p_{y_2} v_3 + \varrho v_3 \p_{y_3} v_3 + \p_{y_3} P(\varrho)=0,
\end{cases}\ee
where ${\bf v}=(v_1, v_2,v_3)$, $\varrho$ and $P$ stand for the velocity, density and pressure respectively. For polytropic gases, the equation of state takes the form
\begin{equation}\label{eqstate}
P(\varrho)=\varrho^{\gamma}\quad \ \ \ \
\end{equation}
where $\gamma> 1$ is the adiabatic exponent. The domain $\mathbb{D}$ is given by
\be\no
\mathbb{D}=\{(y_1,y_2,y_3): L_0<y_1<L_1, -1<y_2<1,\ 0<y_3<a(y_1)\},
\ee
where the positive function $a(y_1)\in C^{\infty}([L_0,L_1])$ with $L_0<0<L_1$ satisfies
\be\label{qu1}\begin{cases}
a'(y_1)<0,\ \  \ \forall y_1\in [L_0,0),\\
a'(0)=0,\\
a'(y_1)>0,\ \  \ \forall y_1\in (0, L_1].
\end{cases}\ee
Here $a(y_1)$ is assumed to belong to $C^{\infty}([L_0,L_1])$ just for simplicity and is not optimal.

Perform a change of variables:
\be\no
x_1= y_1, \ \ x_2=y_2,\ \ \ x_3=\frac{y_3}{a(y_1)},
\ee
and define new unknowns as
\be\no
(\rho, u_1, u_2 ,u_3)(x_1,x_2,x_3)= (\varrho, v_1,v_2,v_3)(x_1,x_2, a(x_1)x_3).
\ee
Then it holds that on $\{(x_1,x_2,x_3):L_0<x_1<L_1, |x_2|<1, 0<x_3<1\}$:
\be\label{3d1}\begin{cases}
\p_{x_1}(a(x_1) \rho u_1)+ \p_{x_2}(a(x_1)\rho u_2)- a'(x_1) \rho u_1- a'(x_1) x_3\p_{x_3} (\rho u_1) + \p_{x_3}(\rho u_3)=0,\\
\rho u_1(\p_{x_1} u_1-\frac{a'(x_1)}{a(x_1)}x_3 \p_{x_3} u_1)+ \rho u_2 \p_{x_2} u_1 + \frac{\rho u_3}{a(x_1)}\p_{x_3} u_1 + \p_{x_1} P(\rho)- \frac{a'(x_1)}{a(x_1)} x_3 \p_{x_3} P(\rho)=0,\\
\rho u_1(\p_{x_1} u_2-\frac{a'(x_1)}{a(x_1)}x_3 \p_{x_3} u_2)+ \rho u_2 \p_{x_2} u_2 + \frac{\rho u_3}{a(x_1)}\p_{x_3} u_2+ \p_{x_2} P(\rho)=0,\\
\rho u_1(\p_{x_1} u_3-\frac{a'(x_1)}{a(x_1)}x_3 \p_{x_3} u_3)+ \rho u_2 \p_{x_2} u_3 + \frac{\rho u_3}{a(x_1)}\p_{x_3} u_3+ \frac{1}{a(x_1)}\p_{x_3} P(\rho)=0.
\end{cases}\ee

\begin{figure}
  \centering
  \includegraphics[width=11cm,height=6cm]{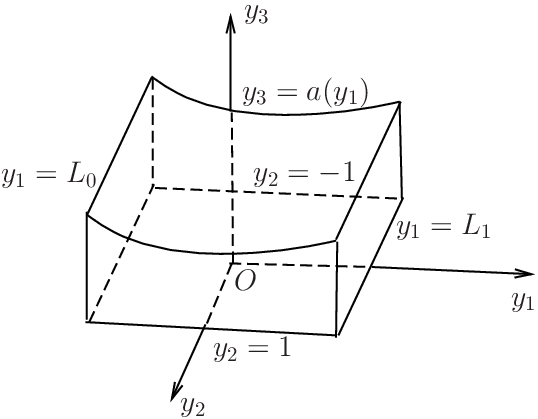}
  \caption{A three dimensional slowly varying nozzle}
\end{figure}

Assume that the nozzle is slowly varying, i.e. for some small positive constant $\epsilon$
\be\label{a1}
a(x_1)=a(0)+O(\epsilon),  a'(x_1)=O(\epsilon).
\ee
Then it is reasonable to assume that the variation of the velocity $u_3$ is also of order $O(\epsilon)$:
\be\label{a2}
u_3=O(\epsilon).
\ee
Thus neglecting the terms of order $O(\epsilon)$ and the last equation for $u_3$ in \eqref{3d1}, we obtain a quasi two dimensional steady Euler flow model for $(\rho, u_1, u_2)$ in $\Om=\{(x_1,x_2): L_0<x_1<L_1, -1<x_2<1\}$:
\be\label{q2deuler}\begin{cases}
\p_{x_1}(a(x_1) \rho u_1)+ \p_{x_2}(a(x_1)\rho u_2)=0,\\
\rho u_1\p_{x_1} u_1+ \rho u_2 \p_{x_2} u_1+ \p_{x_1} P(\rho)=0,\\
\rho u_1\p_{x_1} u_2+ \rho u_2 \p_{x_2} u_2+ \p_{x_2} P(\rho)=0.
\end{cases}\ee

Note that one can derive the classical quasi one dimensional model from the two dimensional isentropic Euler equations by same arguments as above. Indeed, suppose that the flow parameters do not depend on the variable $x_2$, then the system \eqref{q2deuler} reduces to the quasi one dimensional model
\be\label{quasi}\begin{cases}
(\rho u_1)'(x_1)+\frac{a'(x_1)}{a(x_1)}\rho u_1=0,\\
\rho u_1u_1'(x_1)+(P(\rho))'(x_1)=0.
\end{cases}\ee

The quasi one dimensional model \eqref{quasi} is usually derived under the approximation that the flow is one dimensional, i.e., the flow parameters across each section are uniform, one may refer to \cite[Chapter 2]{lr57} for a detailed derivation. The above argument provides a multi-scale derivation of \eqref{quasi}, which seems to be new. The model \eqref{quasi} had been used in aerodynamics and engineering to describe the gas flow in a duct of varying cross-section and been extensively studied by many mathematicians (see \cite{Courant1948,egm84,gmp84,L82a,RXY13} and the references therein). Steady transonic shock solutions were constructed in \cite{egm84,L82b} satisfying suitable boundary conditions. It was proved in \cite{L82a} that when $\frac{a'(x_1)}{a(x_1)}$ is suitably small, the flow along a contracting duct which contains a shock wave is dynamically unstable, while a flow with a standing shock wave along an expanding duct is dynamically stable. It was shown in \cite{RXY13} proved that a steady transonic shock wave in divergent quasi one dimensional nozzles is dynamically global stable without assuming either the smallness of the relative slope of the nozzle or the weakness of the shock wave.

Note that the second and third equations in \eqref{q2deuler} are just the momentum equations in the two dimensional steady Euler equations, and the Bernoulli's law also holds for this system:
\be\label{ber}
u_1\p_{x_1} B + u_2\p_{x_2} B=0,
\ee
where $B=\frac{1}{2}(u_1^2+u_2^2)+\frac{p'(\rho)}{\gamma-1}$. Thus the model \eqref{q2deuler} has its own interest and is expected to describe the variation of the flow parameters across each section in three dimensional slowly varying nozzles.

Concerning the {\bf Problem}, we will first prove the existence and uniqueness of smooth transonic flows of Meyer type to the quasi one-dimensional model \eqref{quasi}. These flows could have positive or zero acceleration at their sonic points and the degeneracy types near the sonic points will be classified precisely. By investigating the structural stability of these one dimensional transonic solutions to \eqref{quasi}, we will further establish the existence and uniqueness of a class of smooth transonic flows with nonzero vorticity and positive acceleration to the quasi two dimensional model \eqref{q2deuler}. These give a positive answer to the {\bf Problem} at least for the quasi one dimensional model \eqref{quasi} and the quasi two dimensional model \eqref{q2deuler}, which may shed light on the solvability of the {\bf Problem} for the steady compressible Euler equations.

\subsection{Smooth transonic flows to the quasi one dimensional model}\label{quasi1d}\noindent

To obtain an accelerating transonic flow $(\bar{\rho},\bar{u})$ to \eqref{quasi} on the interval $[L_0,L_1]$ with sonic state occurring at the point $x_1=0$, one rewrites \eqref{quasi} as
\begin{eqnarray}\label{qu1d}\begin{cases}
(a\bar{\rho} \bar{u})'(x_1)\equiv 0,\\
\bar{\rho} \bar{u} \bar{u}'(x_1)+ P'(\bar{\rho}) \bar{\rho}'(x_1)=0,\\
\bar{\rho}(L_0)=\rho_0>0,\ \ \bar{u}(L_0)= u_0>0,
\end{cases}\end{eqnarray}
where the initial state at $x_1=L_0$ is subsonic, i.e. $u_0^2<c^2(\rho_0)=\gamma \rho_0^{\gamma-1}$. Set $J=a(x_1) \bar{\rho} \bar{u}(x_1)=\rho_0 u_0 a(L_0)>0$. Then it follows from \eqref{qu1d} that
\begin{eqnarray}\label{qu1d2}
\frac{1}{2}(\bar{u}(x_1))^2+ \frac{\gamma}{\gamma-1}\bar{\rho}^{\gamma-1} \equiv B_0:=\frac{1}{2} u_0^2 + \frac{\gamma\rho_0^{\gamma-1}}{\gamma-1}.
\end{eqnarray}
and
\be\label{qu1d1}\begin{cases}
\bar{\rho}(x_1)=\frac{J}{a(x_1)\bar{u}(x_1)},\\
(a^{\gamma}\bar{u}^{\gamma+1}-\gamma J^{\gamma-1}a) \bar{u}'= \gamma J^{\gamma-1} a'(x_1)\bar{u}.
\end{cases}\ee
Define $\bar{M}^2(x_1)=\frac{\bar{u}^2}{c^2(\bar{\rho})}=\frac{\bar{u}^2}{\gamma \bar{\rho}^{\gamma-1}}$. Then
\be\label{qu1d101}
&&\bar{u}'(x_1)=-\frac{\bar{u}(x_1)}{1-\bar{M}^2}b(x_1)=\frac{\gamma J^{\gamma-1} \bar{u}}{a^{\gamma-1} \bar{u}^{\gamma+1}-\gamma J^{\gamma-1}}b(x_1),\\\label{qu1d102}
&&\bar{\rho}'(x_1)=\frac{\bar{\rho}\bar{M}^2 b(x_1)}{1-\bar{M}^2},\\\label{qu1d103}
&&\frac{d}{dx_1} \bar{M}^2=-\frac{\bar{M}^2(2+(\gamma-1)\bar{M}^2)}{1-\bar{M}^2}b(x_1),
\ee
where $b(x_1)=\frac{a'(x_1)}{a(x_1)}$.

Suppose that there exists an accelerating transonic flow to \eqref{qu1d} with the sonic state located at $x_1=0$, i.e. $\bar{u}^2(0)=c^2(\bar{\rho}(0))=\gamma (\bar{\rho}(0))^{\gamma-1}=\gamma (\frac{J}{a(0)\bar{u}(0)})^{\gamma-1}$. By \eqref{qu1d2}, there holds
\be\label{qu1d3}
\bar{u}(0)=\sqrt{\frac{2(\gamma-1)B_0}{\gamma+1}}=\left(\frac{\gamma J^{\gamma-1}}{(a(0))^{\gamma-1}}\right)^{\frac{1}{\gamma+1}}=:c_*.
\ee
Thus
\be\label{qu1d4}
\left(\frac{a(0)}{a(L_0)}\right)^{\gamma-1}=\frac{\gamma (\rho_0 u_0)^{\gamma-1}}{\left(\frac{2(\gamma-1)B_0}{\gamma+1}\right)^{\frac{\gamma+1}{2}}}<1.
\ee
We start with the existence of general accelerating transonic flows.

\begin{proposition}\label{gat}({\bf General accelerating transonic flows.}) Suppose that the initial data $(\rho_0,u_0)$ is subsonic and the function $a(x_1)$ satisfies \eqref{qu1} and \eqref{qu1d4}. Then there exists a unique accelerating transonic flow $(\bar{\rho}(x_1), \bar{u}(x_1))\in C([L_0,L_1])$ which is subsonic in $[L_0,0)$, supersonic in $(0, L_1]$ with a sonic state at $x_1=0$. Furthermore, $(\bar{\rho}(x_1), \bar{u}(x_1))$ is smooth and satisfies the equations \eqref{qu1d} on $[L_0,0)\cup (0,L_1]$.
\end{proposition}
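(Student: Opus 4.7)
The plan is to bypass the singular ODE \eqref{qu1d101} at the sonic point by reducing \eqref{qu1d} to a single algebraic relation via its two conserved quantities. First I would use the Bernoulli law \eqref{qu1d2} to solve $\bar{\rho}$ as a smooth function of $\bar{u}$ on $\{0<\bar{u}^2<2B_0\}$, namely
$$\bar{\rho}(\bar{u}) \;=\; \left[\frac{\gamma-1}{\gamma}\left(B_0-\tfrac{1}{2}\bar{u}^2\right)\right]^{1/(\gamma-1)},$$
and then substitute into the mass flux identity $a(x_1)\bar{\rho}\bar{u}=J$ to obtain the scalar algebraic equation $a(x_1)g(\bar{u})=J$, where $g(u):=u\,\bar{\rho}(u)$. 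A direct computation gives $g'(u)=\bar{\rho}(u)\bigl(1-M^2(u)\bigr)$ with $M^2(u)=u^2/(\gamma\bar{\rho}(u)^{\gamma-1})$, so $g$ is strictly increasing on $(0,c_*)$, attains a unique maximum at $u=c_*$ defined in \eqref{qu1d3}, and is strictly decreasing on $(c_*,\sqrt{2B_0})$. The compatibility condition \eqref{qu1d4} is exactly the statement that $g(c_*)=J/a(0)$.

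Since $a(x_1)\geq a(0)$ on $[L_0,L_1]$ by \eqref{qu1}, we have $J/a(x_1)\leq g(c_*)=\max g$, so for each $x_1\neq 0$ the equation $a(x_1)g(u)=J$ has exactly two roots, a subsonic root $u_{\mathrm{sub}}(x_1)\in(0,c_*)$ and a supersonic root $u_{\mathrm{sup}}(x_1)\in(c_*,\sqrt{2B_0})$, which coalesce to $c_*$ at $x_1=0$. I would then define the desired accelerating transonic flow by $\bar{u}(x_1)=u_{\mathrm{sub}}(x_1)$ for $x_1\in[L_0,0]$ and $\bar{u}(x_1)=u_{\mathrm{sup}}(x_1)$ for $x_1\in[0,L_1]$, with $\bar{\rho}(x_1)=\bar{\rho}(\bar{u}(x_1))$. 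Continuity at $x_1=0$ follows from the coalescence of the two branches, and smoothness on $[L_0,0)\cup(0,L_1]$ is immediate from the implicit function theorem applied to $a(x_1)g(u)=J$, since $g'(u)\neq 0$ off the sonic value. That $\bar{u}'>0$ on each subinterval follows from \eqref{qu1d101} upon observing that $b(x_1)$ and $1-\bar{M}^2(x_1)$ have opposite signs on $[L_0,0)$ and on $(0,L_1]$. Uniqueness is built into the construction: any accelerating transonic solution must track the subsonic branch before the sonic point and the supersonic branch after it, and these branches are uniquely determined by $a$ and $J$.

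The main obstacle is the simultaneous degeneracy at $x_1=0$, where both $a'(0)=0$ and the ODE coefficient $1-\bar{M}^2$ vanish, so a direct ODE argument would need to match two solutions through an irregular singular point. The algebraic reduction above sidesteps this issue entirely by eliminating the ODE and converting the problem into one of branch selection for a single smooth scalar function $g$; the finer question of the degeneracy type at $x_1=0$ and whether $\bar{u}$ can be extended smoothly across the sonic point is then left to the subsequent classification results.
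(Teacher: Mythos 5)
Your proof is correct and takes essentially the same route as the paper's: both reduce \eqref{qu1d} to a single scalar algebraic equation (your $a(x_1)g(u)=J$ with $g(u)=u\bar{\rho}(u)$ is just a rewriting of the paper's $F(x_1,t;J)=0$), show the relevant one-variable function has a unique extremum at the sonic speed $c_*$, select the subsonic root for $x_1<0$ and the supersonic root for $x_1>0$, obtain continuity at $x_1=0$ from coalescence of the two roots, and use the implicit function theorem for smoothness away from the throat. The identity $g'(u)=\bar{\rho}(u)(1-M^2(u))$ makes the sonic degeneracy especially transparent in your formulation, but it is the same argument.
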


\begin{proof}
For smooth solutions, the system \eqref{qu1d} is equivalent to
\be\label{q1}
F(x_1,\bar{u}(x_1);J)=0,\ \ \ \bar{\rho}(x_1)=\frac{J}{a(x_1)\bar{u}(x_1)},
\ee
where
\be\label{q2}
F(x_1,t;J)= \frac{1}{2} t^2 + \frac{\gamma J^{\gamma-1}}{(\gamma-1) (a(x_1))^{\gamma-1}}\frac{1}{t^{\gamma-1}}-B_0.
\ee
For fixed $x_1\in [L_0,L_1]$ and $J$, it is easy to see that on $(0,+\infty)$, $F(x_1,t;J)$ attains its minimum value at $t=t_*(x_1)=\left(\frac{\gamma J^{\gamma-1}}{(a(x_1))^{\gamma-1}}\right)^{\frac{1}{\gamma+1}}$. By \eqref{qu1} and \eqref{qu1d4}, for any $x_1\in [L_0,0)\cup (0,L_1]$ there holds
\be\no
F(x_1,t_*(x_1),J)&=&\frac{\gamma+1}{2(\gamma-1)}\left(\frac{\gamma J^{\gamma-1}}{(a(x_1))^{\gamma-1}}\right)^{\frac{2}{\gamma+1}}-B_0\\\no
&=&\frac{\gamma+1}{2(\gamma-1)}\left(\gamma J^{\gamma-1}\right)^{\frac{2}{\gamma+1}}\left((a(x_1))^{-\frac{2(\gamma-1)}{\gamma+1}}-(a(0))^{-\frac{2(\gamma-1)}{\gamma+1}}\right)<0.
\ee

Since $\gamma>1$, it is easy to see that $\displaystyle\lim_{t\to 0+} F(x_1,t;J)=\lim_{t\to +\infty} F(x_1,t;J)=+\infty$ and $F(x_1,t;J)$ is monotone decreasing on $(0,t_*(x_1)]$ and monotone increasing on $[t_*(x_1),+\infty)$. Thus for each $x_1\in [L_0,0)\cup (0,L_1]$, $F(x_1,t;J)=0$ has exactly two solutions $0<t_{sub}(x_1)<t_*(x_1)<t_{sup}(x_1)<+\infty$. For $x_1=0$, $F(0,t;J)=0$ has exactly one solution $c_*=t_*(0)$. Define the function $\bar{u}(x_1)$ as follows:
\be\no
\bar{u}(x_1)=\begin{cases}
t_{sub}(x_1), \ &\ \ \forall x_1\in [L_0,0),\\
t_*(0),\ &\ \ \ x_1=0,\\
t_{sup}(x_1),\ &\ \ \forall x_1\in (0,L_1].
\end{cases}
\ee
For any $\delta\in (0,1)$, there holds that
\be\no
&&F(x_1,(1\pm \delta)t_*(x_1);J)=(\gamma J^{\gamma-1})^{\frac{2}{\gamma+1}}\bigg\{\bigg[\frac{(1\pm \delta)^2}{2}+\frac{(1\pm \delta)^{1-\gamma}}{\gamma-1}\bigg](a(x_1))^{-\frac{2(\gamma-1)}{\gamma+1}}-\frac{\gamma+1}{2(\gamma-1)}(a(0))^{-\frac{2(\gamma-1)}{\gamma+1}}\bigg\}\\\no
&&=(\gamma J^{\gamma-1})^{\frac{2}{\gamma+1}}\bigg\{\bigg[\frac{\gamma+1}{2}\delta^2+ O(|\delta|^3)\bigg](a(x_1))^{-\frac{2(\gamma-1)}{\gamma+1}}+\frac{\gamma+1}{2(\gamma-1)}\bigg[(a(x_1))^{-\frac{2(\gamma-1)}{\gamma+1}}-
(a(0))^{-\frac{2(\gamma-1)}{\gamma+1}}\bigg]\bigg\}.
\ee
Fix $\delta\in (0,1)$, then for sufficiently small $x_1$, it is easy to see that $F(x_1,(1\pm \delta)t_*(x_1);J)>0$, which implies that for sufficiently small $x_1$
\be\no
(1-\delta) t_*(x_1)<t_{sub}(x_1)<t_*(x_1)<t_{sup}(x_1)<(1+\delta) t_*(x_1).
\ee
Thus one has $\displaystyle\lim_{x_1\to 0} t_{sub}(x_1)=\lim_{x_1\to 0} t_{sup}(x_1)=\lim_{x_1\to 0} t_{*}(x_1)=t_*(0)$, and $(\bar{u}(x_1),\bar{\rho}(x_1)=\frac{J}{a(x_1)\bar{u}(x_1)})$ belongs to $C([L_0,L_1]))^2$ and is subsonic in $[L_0,0)$, supersonic in $(0, L_1]$ with a sonic state at $x_1=0$. Furthermore, for each $x_1\in [L_0,0)\cup (0,L_1]$, one has
\be\no
\p_t F(x_1,t_{sub}(x_1);J)<0,\ \ \p_t F(x_1,t_{sup}(x_1);J)>0
\ee
By the implicit function theorem, $(\bar{\rho}(x_1), \bar{u}(x_1))$ is smooth and satisfies the equations \eqref{qu1d} on $[L_0,0)\cup (0,L_1]$. The proof of Proposition \ref{gat} is complete.

\end{proof}

%

Note that there is no information on the differentiability of the transonic solution at the sonic point $x_1=0$ in Proposition \ref{gat}. Thus we will consider the regularity and behavior of the transonic solution at the sonic point, which depend on the geometry of the nozzle wall near the throat. Suppose that the solution $(\bar{u}(x_1),\bar{\rho}(x_1))$ is smooth at $x_1=0$, then \eqref{qu1d} is also satisfied at $x_1=0$. Differentiating the second equation in \eqref{qu1d1} and evaluating at $x_1=0$ yield
\be\no
\gamma J^{\gamma-1} a''(0)= (\gamma+1) (a(0))^{\gamma} c_*^{\gamma-1} (\bar{u}'(0))^2\geq 0.
\ee

Consider first the case that the smooth transonic flow has a positive acceleration at the sonic point:
\be\label{qu1d40}
a''(0)>0.
\ee

\begin{proposition}\label{qpo}({\bf Smooth transonic flows with positive acceleration at the sonic point.}) Suppose that the initial data $(\rho_0,u_0)$ is subsonic and the function $a(x_1)$ satisfies \eqref{qu1}, \eqref{qu1d4} and \eqref{qu1d40}. Then there exists a unique smooth transonic flow $(\bar{\rho}(x_1), \bar{u}(x_1))\in C^{\infty}([L_0,L_1])$ to \eqref{qu1d} which is subsonic in $[L_0,0)$, supersonic in $(0, L_1]$ with a sonic state at $x_1=0$.
\end{proposition}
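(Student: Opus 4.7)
The plan is to apply the implicit function theorem in a neighborhood of the sonic point $(x_1, t) = (0, c_*)$ after a substitution that resolves the $0/0$ degeneracy of $F(x_1, t; J) = 0$ there. A direct application fails because $\partial_t F(0, c_*; J) = 0$ by the sonic condition; however, the assumption $a'(0) = 0$ in \eqref{qu1} gives also $\partial_{x_1} F(0, c_*; J) = 0$, so $(0, c_*)$ is a critical point of $F$. A direct computation from \eqref{q2}, using the identity $c_*^{\gamma+1} = \gamma J^{\gamma-1}/(a(0))^{\gamma-1}$, yields
\[
\partial_{tt}^2 F(0, c_*; J) = \gamma + 1, \qquad \partial_{x_1 t}^2 F(0, c_*; J) = 0, \qquad \partial_{x_1 x_1}^2 F(0, c_*; J) = -\frac{c_*^2 a''(0)}{a(0)},
\]
so by \eqref{qu1d40} the Hessian of $F$ at $(0, c_*)$ is nondegenerate with signature $(+, -)$. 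Taylor's theorem therefore writes
\[
F(x_1, t; J) = \frac{\gamma + 1}{2} (t - c_*)^2 - \frac{c_*^2 a''(0)}{2 a(0)} x_1^2 + R(x_1, t - c_*),
\]
with $R$ smooth in $(x_1, t)$ and $R(x_1, y) = O(|(x_1, y)|^3)$ near the origin.

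Next I would introduce the rescaled variable $w$ via $t = c_* + x_1 w$ and set
\[
G(x_1, w; J) := \frac{F(x_1, c_* + x_1 w; J)}{x_1^2}, \qquad x_1 \neq 0.
\]
Writing $R(x_1, y) = x_1^3 r_1(x_1, y) + y x_1^2 r_2(x_1, y) + y^2 x_1 r_3(x_1, y) + y^3 r_4(x_1, y)$ with smooth $r_i$ (Taylor remainder in factored form), the substitution $y = x_1 w$ produces an overall factor of $x_1^3$ in $R(x_1, x_1 w)$; consequently $G$ extends to a $C^\infty$ function of $(x_1, w)$ on a neighborhood of $(0, w_0)$ with
\[
G(0, w; J) = \frac{\gamma + 1}{2} w^2 - \frac{c_*^2 a''(0)}{2 a(0)}.
\]
The positive root of $G(0, \cdot; J)$ is $w_0 := c_* \sqrt{a''(0)/((\gamma + 1) a(0))}$, and $\partial_w G(0, w_0; J) = (\gamma + 1) w_0 > 0$. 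The implicit function theorem then produces a unique $C^\infty$ function $w(x_1)$ near $0$ with $w(0) = w_0$ and $G(x_1, w(x_1); J) \equiv 0$, so that $\tilde u(x_1) := c_* + x_1 w(x_1)$ is a $C^\infty$ solution of $F(x_1, \tilde u(x_1); J) = 0$ near $0$ satisfying $\tilde u(x_1) < c_*$ for $x_1 < 0$ and $\tilde u(x_1) > c_*$ for $x_1 > 0$.

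It remains to glue this local smooth branch to the transonic solution built in Proposition \ref{gat}. For $|x_1|$ small, the only solutions of $F(x_1, \cdot; J) = 0$ in a neighborhood of $c_*$ are $t_{sub}(x_1) < t_*(x_1) < t_{sup}(x_1)$; the sign condition on $\tilde u - c_*$ forces $\tilde u = t_{sub}$ on a left neighborhood of $0$ and $\tilde u = t_{sup}$ on a right neighborhood of $0$, which is precisely $\bar u(x_1)$ from Proposition \ref{gat}. Combined with the smoothness of $\bar u$ on $[L_0, 0) \cup (0, L_1]$ already established there, this upgrades $\bar u$ to $C^\infty([L_0, L_1])$, whence $\bar\rho = J/(a \bar u) \in C^\infty([L_0, L_1])$; uniqueness is inherited from that in Proposition \ref{gat}. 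The main obstacle is verifying that the cubic remainder $R/x_1^2$ extends smoothly in $(x_1, w)$ across $x_1 = 0$ under the anisotropic substitution $t - c_* = x_1 w$; this is standard but requires the factored form of Taylor's theorem above, and is precisely why the strict sign condition $a''(0) > 0$ (rather than $a''(0) \ge 0$) is needed to make $\partial_w G(0, w_0; J) \ne 0$.
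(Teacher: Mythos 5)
Your proof is correct and follows essentially the same route as the paper: reduce the degeneracy at the sonic point via the substitution $t-c_*=x_1 w$, apply the implicit function theorem to the rescaled equation, and glue the resulting local smooth branch to the continuous solution of Proposition \ref{gat} by uniqueness. The only cosmetic deviations are that you apply the implicit function theorem directly to $G(x_1,w)=F(x_1,c_*+x_1w;J)/x_1^2$ rather than to the square-rooted form $H(x_1,y)=y-\sqrt{\mu^2-\tfrac{2}{\gamma+1}G_1(x_1,y)}$ used in the paper, and that you make the smoothness of the rescaled remainder explicit via the factored (Hadamard) form of Taylor's theorem, a point the paper treats more briefly.
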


\begin{proof}

By Proposition \ref{gat}, there exists a unique accelerating transonic flow $(\bar{\rho}(x_1), \bar{u}(x_1))\in C([L_0,L_1])$ which is subsonic in $[L_0,0)$ and supersonic in $(0, L_1]$ with a sonic state at $x_1=0$. It remains to prove that the solution passes smoothly through the point $x_1=0$. Here we employ an argument to reduce the degeneracy of the solution near the sonic point so that the implicit function theorem can be applied to find a smooth solution to \eqref{qu1d} which coincides with $(\bar{\rho}(x_1), \bar{u}(x_1))$.

It is easy to see that $F(0,c_*;J)=\frac{\p F}{\p t}(0,c_*;J)=\frac{\p F}{\p x_1}(0,c_*; J)=0$ and
\begin{eqnarray}\label{qu1d7}
\frac{\p^2 F}{\p t^2}(0,c_*;J)=1+\gamma, \ \  \frac{\p^2 F}{\p t \p x_1}(0,c_*;J)= 0,\ \  \frac{\p^2 F}{\p x_1^2}(0,c_*;J)=- \frac{c_*^2}{a(0)}a''(0) <0.
\end{eqnarray}
Thus Taylor's expansion yields
\begin{eqnarray}\label{qu1d8}
F(x_1,t;J)= \frac{1}{2}(1+\gamma) (t-c_*)^2- \frac{1}{2} \frac{c_*^2}{a(0)}a''(0) x_1^2+ G(x_1,t-c_*),
\end{eqnarray}
where for some positive constants $C_1$ and $\sigma_1$
\be\no
|G(x_1,t-c_*)|\leq C_1 (|t-c_*|^3+ |x_1|^3),\ \  \text{for any } |t-c_*|+|x_1|\leq \sigma_1.
\ee
Set $t-c_*=x_1 y(x_1)$, where $y=y(x_1)$ is a positive function defined on a neighborhood of $x_1=0$ to be determined later. Then the equation $F(t,x_1;J)=0$ can be rewritten as
\be\label{qu1d9}
y^2-\frac{c_*^2 a''(0)}{(\gamma+1)a(0)}  + \frac{2}{\gamma+1} G_1(x_1, y)=0,
\ee
where
\be\label{quld10}
|G_1(x_1,y)|=\left|\frac{1}{x_1^2}G(x_1y, x_1)\right|\leq C_1(|x_1| |y|^3+ |x_1|),\ \ \text{for any }|x_1|\leq \sigma_1.
\ee
Thus
\be\label{quld11}
H(x_1,y):=y-\sqrt{\mu^2 - \frac{2}{\gamma+1} G_1(x_1, y)}=0,\ \text{where}\ \ \mu=c_*\sqrt{\frac{a''(0)}{(\gamma+1)a(0)}}.
\ee
By \eqref{quld10}, $H(0,\mu)=0$. Also $\p_{y} G_1(x_1,y)=\frac{1}{x_1}\p_{t} G(x_1, x_1y)$, where
\be\no
\p_t G(x_1, t-c_*)&=&t-\frac{\gamma J^{\gamma-1}}{(a(x_1))^{\gamma-1}} t^{-\gamma}-(1+\gamma)(t-c_*)
\\\no&=& -\gamma (t-c_*)+c_*^{-\gamma}\frac{\gamma J^{\gamma-1}}{(a(0))^{\gamma-1}}-\frac{\gamma J^{\gamma-1}}{(a(x_1))^{\gamma-1}}t^{-\gamma}.
\ee
Therefore
\be\no
\frac{\p H}{\partial y}(0,\mu)&=&1+\frac{1}{(\gamma+1)\mu}\lim_{x_1\to 0}\frac{1}{x_1}\p_t G(x_1, \mu x_1)\\\no
&=&1+\frac{1}{(\gamma+1)\mu}\left\{-\gamma \mu+\lim_{x_1\to 0}\frac{1}{x_1}\left(c_*^{-\gamma}\frac{\gamma J^{\gamma-1}}{(a(0))^{\gamma-1}}-\frac{\gamma J^{\gamma-1}}{(a(x_1))^{\gamma-1}}(c_*+\mu x_1)^{-\gamma}\right)\right\}=1
.
\ee
Thus by the implicit function theorem, there exists a unique smooth positive function $y=y(x_1)$ defined on the interval $[-\sigma_2, \sigma_2]$ for some $0<\sigma_2\leq \sigma_1$ such that \eqref{quld11} holds. Moreover, the function $\bar{u}_1(x_1):= c_*+ x_1 y(x_1)\in C^{\infty}([-\sigma_2, \sigma_2])$ solves the equation \eqref{q1} on the interval $[-\sigma_2, \sigma_2]$, and $(\bar{u}_1(x_1),\frac{J}{a(x_1)\bar{u}_1(x_1)})$ is subsonic in $[-\sigma_2, 0)$ and supersonic in $(0,\sigma_2]$. Thanks to the uniqueness of an accelerating transonic flow to \eqref{q1}, one has $(\bar{u},\bar{\rho})\equiv (\bar{u}_1(x_1),\frac{J}{a(x_1)\bar{u}_1(x_1)})$ on $[-\sigma_2,\sigma_2]$.

Thus, we have obtained the desired smooth accelerating transonic flow $(\bar{u}(x_1),\frac{J}{a(x_1)\bar{u}(x_1)})$ to \eqref{qu1d} on the interval $[L_0,L_1]$ with a sonic point located at $x_1=0$. The derivative of $\bar{u}$ at $x_1=0$ exists and equals to $\bar{u}'(0)=y(0)=\mu=c_*\sqrt{\frac{a''(0)}{(\gamma+1) a(0)}}$.

\end{proof}

Next, we turn to the case that the smooth transonic flow may have zero acceleration at the sonic point (i.e. $a''(0)=0$). Suppose that there exists a smooth transonic flow near $x_1=0$ in this case, then $\bar{u}'(0)=0$. Note that $\bar{u}'(x_1)>0$ for any $x_1\in [L_0,0)\cup (0, L_1]$, this further implies $\bar{u}''(0)=0$. Rewrite the second equation in \eqref{qu1d1} as
\be\label{qu1d41}
\gamma J^{\gamma-1} a'(x_1)\bar{u}(x_1)= D(x_1) \bar{u}'(x_1),\ \text{where}\ \ D(x_1)=a^{\gamma}\bar{u}^{\gamma+1}-\gamma J^{\gamma-1} a(x_1).
\ee
Simple calculations show that $D(0)=D'(0)=D''(0)=0$ and $D^{(3)}(0)= (\gamma+1) c_*^{\gamma}(a(0))^{\gamma} \bar{u}^{(3)}(0)$. By \eqref{qu1d41}, further computations yield that $a^{(3)}(0)=a^{(4)}(0)=\cdots=a^{(5)}(0)=0$ and
\be\label{qu1d42}
a^{(6)}(0)=\frac{10(\gamma+1)}{\gamma J^{\gamma-1}}c_*^{\gamma-1}(a(0))^{\gamma} (\bar{u}^{(3)}(0))^2\geq 0.
\ee

If $a^{(6)}(0)>0$, one could prove that there exists a unique smooth accelerating transonic flow $(\bar{u},\bar{\rho})$ to \eqref{qu1d} with \eqref{qu1d1}, \eqref{qu1d4} and $a''(0)=\cdots=a^{(5)}(0)=0$. Indeed, we have the following general existence theorem.

\begin{proposition}\label{qzero1}({\bf Smooth transonic flows with zero acceleration at the sonic point: case 1.}) Suppose that the initial data $(\rho_0,u_0)$ is subsonic, the function $a(x_1)$ satisfies \eqref{qu1}, \eqref{qu1d4}, and for some nonnegative integer $m\geq 1$, it holds that
\be\label{qu1d200}
a''(0)=a^{(3)}(0)=\cdots =a^{(4m+1)}(0)=0,\ \ \ a^{(4m+2)}(0)>0.
\ee
Then there exists a unique smooth accelerating transonic solution $(\bar{\rho}(x_1), \bar{u}(x_1))\in C^{\infty}([L_0,L_1])$ to \eqref{qu1d} such that the solution is subsonic in $[L_0,0)$, supersonic in $(0, L_1]$ with a sonic state at $x_1=0$. The velocity can be represented as $\bar{u}(x_1)=c_*+ x_1^{2m+1} y(x_1)$ with a positive smooth function $y\in C^{\infty}([L_0,L_1])$ and
\be\no
&&\bar{u}'(0)=\bar{u}''(0)=\cdots= \bar{u}^{(2m)}(0)=0,\\\no
&&\bar{u}^{(2m+1)}(0)=(2m+1)! y(0)=(2m+1)!\sqrt{\frac{2}{(\gamma+1)}\frac{1}{(4m+2)!}\frac{c_*^2 a^{(4m+2)}(0)}{a(0)}}>0.
\ee
\end{proposition}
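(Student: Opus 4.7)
\medskip\noindent\textbf{Proof proposal.} The plan is to mimic the reduction-of-degeneracy argument of Proposition \ref{qpo}, but with an ansatz tuned to the higher-order flatness of $a$ at the throat. Proposition \ref{gat} already delivers a unique continuous transonic solution $(\bar{\rho},\bar{u})$ which is smooth away from $x_1=0$, so only the regularity at the sonic point remains. For smooth solutions, \eqref{qu1d} is equivalent to $F(x_1,\bar{u}(x_1);J)=0$ with $F$ defined in \eqref{q2}, and the task reduces to producing a smooth root through the critical point $(0,c_*)$.

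Start from the splitting
\[
F(x_1,t;J)=F(0,t;J)+\frac{\gamma J^{\gamma-1}}{\gamma-1}\,t^{1-\gamma}\bigl[(a(x_1))^{-(\gamma-1)}-(a(0))^{-(\gamma-1)}\bigr].
\]
The hypotheses $a'(0)=0$ and $a''(0)=\cdots=a^{(4m+1)}(0)=0$ yield, via Hadamard's lemma, the factorization
\[
(a(x_1))^{-(\gamma-1)}-(a(0))^{-(\gamma-1)}=x_1^{4m+2}g(x_1),\qquad g(0)=-\frac{(\gamma-1)\,a^{(4m+2)}(0)}{(4m+2)!\,(a(0))^{\gamma}}<0,
\]
with $g$ smooth. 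In parallel, $F(0,\cdot;J)$ has a nondegenerate quadratic minimum $F(0,t;J)=\tfrac{\gamma+1}{2}(t-c_*)^{2}+O((t-c_*)^{3})$. Balancing the two leading contributions forces the ansatz $t-c_*=x_1^{2m+1}y$, which is exactly what makes $(t-c_*)^{2}$ and the forcing $x_1^{4m+2}$ appear at the same order $4m+2$.

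Introduce
\[
H(x_1,y):=\frac{1}{x_1^{4m+2}}\,F\bigl(x_1,\,c_*+x_1^{2m+1}y;\,J\bigr).
\]
The first and main step is to verify that $H$ extends to a smooth function on a neighborhood of $(0,\mu)$ in $\mathbb{R}^2$, where $\mu=c_*\sqrt{\tfrac{2\,a^{(4m+2)}(0)}{(\gamma+1)(4m+2)!\,a(0)}}$. This is a bookkeeping exercise: each Taylor monomial $x_1^{j}(t-c_*)^{k}$ becomes $x_1^{\,j+(2m+1)k}y^{k}$ after the substitution. Contributions from $F(0,\cdot;J)$ with $k\geq 3$ carry $(2m+1)k\geq 6m+3>4m+2$, while contributions from the $x_1$-dependent piece start at $j=4m+2$ and only gain further nonnegative powers of $x_1$ from the Taylor remainders of $g(x_1)$ and $t^{1-\gamma}$. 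After dividing by $x_1^{4m+2}$ each monomial retains a nonnegative integer power of $x_1$, so $H$ is genuinely smooth, and passing to $x_1=0$ one reads off
\[
H(0,y)=\frac{\gamma+1}{2}\,y^{2}-\frac{c_*^{2}\,a^{(4m+2)}(0)}{(4m+2)!\,a(0)}.
\]
Thus $H(0,\mu)=0$ and $\partial_{y}H(0,\mu)=(\gamma+1)\mu>0$, and the implicit function theorem produces a unique smooth positive solution $y=y(x_1)$ near $x_1=0$ with $y(0)=\mu$. Setting $\bar{u}_{1}(x_1):=c_*+x_1^{2m+1}y(x_1)$ gives a smooth accelerating transonic solution on a neighborhood of the throat; uniqueness in Proposition \ref{gat} forces $\bar{u}_{1}\equiv\bar{u}$, so $\bar{u}\in C^{\infty}([L_0,L_1])$, and differentiating $c_*+x_1^{2m+1}y(x_1)$ at $x_1=0$ directly yields $\bar{u}^{(j)}(0)=0$ for $1\leq j\leq 2m$ and $\bar{u}^{(2m+1)}(0)=(2m+1)!\,\mu$ as claimed.

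The principal obstacle is the smoothness of $H$ across $x_1=0$: the flatness hypothesis \eqref{qu1d200} must simultaneously absorb the quadratic scale of the minimum of $F(0,\cdot;J)$ and every Taylor remainder in a single even power $4m+2=2(2m+1)$, which is precisely why the allowed order of flatness has to be even. If $a^{(k)}(0)$ were the first nonvanishing derivative at an odd $k\geq 3$, the analogous ansatz would require a fractional exponent and the constant term of $H(0,\cdot)$ would have the wrong sign to admit a real positive $\mu$, explaining why the hypothesis is phrased in terms of $4m+2$ rather than a general integer.
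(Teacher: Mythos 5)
Your proposal is correct and follows essentially the same route as the paper: the reduction-of-degeneracy ansatz $\bar{u}(x_1)=c_*+x_1^{2m+1}y(x_1)$ followed by the implicit function theorem for $y$, with the same constant $\mu$. The only cosmetic difference is that you apply the IFT directly to $H(x_1,y)=x_1^{-(4m+2)}F(x_1,c_*+x_1^{2m+1}y;J)$ (checking $\partial_y H(0,\mu)=(\gamma+1)\mu>0$), whereas the paper, mirroring its Proposition \ref{qpo} argument, rewrites the reduced equation in the square-root form $y=\sqrt{\mu^2-\tfrac{2}{\gamma+1}H_m}$; your formulation is a little more streamlined but not a genuinely different method.
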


\begin{proof}

It follows from \eqref{qu1d200} and Taylor's expansion that
\begin{eqnarray}\no
F(x_1,t;c_*)= \frac{1}{2}(1+\gamma) (t-c_*)^2-\frac{a^{(4m+2)}(0)}{(4m+2)!} \frac{c_*^2}{a(0)} x_1^{4m+2} + G_m(x_1,t-c_*),
\end{eqnarray}
where for some positive constants $C_2$ and $\sigma_2$
\be\no
|G_m(x_1,t-c_*)|\leq C_2 (|t-c_*|^3+|t-c_*||x_1|^{4m+2}+|x_1|^{4m+3}),\ \  \text{for any } |t-c_*|+|x_1|\leq \sigma_2.
\ee
We would like to find the solution $\bar{u}(x_1)$ to \eqref{qu1d} with the form $\bar{u}(x_1)=c_*+x_1^{2m+1} y(x_1)$, where $y$ is a positive smooth function on $[-\sigma_2, \sigma_2]$, then the equation $F(x_1,\bar{u}(x_1);c_*)=0$ can be rewritten as
\be\no
y^2(x_1)-\frac{2}{(\gamma+1)}\frac{a^{(4m+2)}(0)}{(4m+2)!} \frac{c_*^2}{a(0)} + \frac{2}{\gamma+1} H_m(x_1, y(x_1))=0,
\ee
where
\be\no
|H_m(x_1,y)|=\left|\frac{1}{x_1^{2(2m+1)}}G_m(x_1,x_1y)\right|\leq C_2(|x_1|^{2m+1} |y|+ |x_1|),\ \ \text{for any }|x_1|\leq \sigma_2.
\ee
Since $y(x_1)>0$ for any $x_1\in [-\sigma_2,\sigma_2]$, thus
\be\no
y(x_1)=\sqrt{\frac{2}{(\gamma+1)}\frac{a^{(4m+2)}(0)}{(4m+2)!}\frac{c_*^2}{a(0)} - \frac{2}{\gamma+1} H_m(x_1, y(x_1))}.
\ee
The rest of the arguments are similar to the previous ones in the proof of Proposition \ref{qpo} so omitted.

\end{proof}

In the case $a''(0)=0$, there is another possibility that the solution $\bar{u}$ is only one order differentiable at $x_1=0$, that is $\bar{u}''(x_1)$ has a discontinuity at $x_1=0$. Then one can not conclude that $a^{(3)}(0)=\cdots =a^{(5)}(0)=0$ as above. Yet, the following existence result holds.

\begin{proposition}\label{qzero2}({\bf Smooth transonic flows with zero acceleration at the sonic point: case 2.}) Under the same assumptions as in Proposition \ref{qzero1} except \eqref{qu1d200}, which is replaced by
\be\label{qu1d201}
a''(0)=a^{(3)}(0)=\cdots =a^{(4m-1)}(0)=0,\ \ \ a^{(4m)}(0)>0,\ \ \text{for some integer $m\geq 1$},
\ee
then there exists a unique $C^{2m-1,1}$ smooth accelerating transonic solution $(\bar{\rho}(x_1), \bar{u}(x_1))\in C^{2m-1,1}([L_0,L_1])$ to \eqref{qu1d} such that the solution is subsonic in $[L_0,0)$, supersonic in $(0, L_1]$ with a sonic state at $x_1=0$. The velocity can be represented as $\bar{u}(x_1)=c_*+ x_1^{2m} y(x_1)$, where the function $y$ is defined on $[L_0,L_1]$ with a discontinuity at $x_1=0$:
\be\no
y(x_1)=\begin{cases}
y_-(x_1)<0,\ \ x_1\in [L_0, 0),\\
y_+(x_1)>0,\ \ x_1\in (0, L_1],
\end{cases}\ee
with $y_-\in C^{\infty}([L_0,0])$ and $y_+\in C^{\infty}([0,L_1])$.

Furthermore,
\be\no
&&\bar{u}'(0)=\bar{u}''(0)=\cdots= \bar{u}^{(2m-1)}(0)=0,\\\no
&&\bar{u}^{(2m)}(0-)= (2m)! y_1(0)<0,\ \ \bar{u}^{(2m)}(0+)=(2m)! y_2(0)>0.
\ee
\end{proposition}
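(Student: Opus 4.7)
The plan is to mimic the strategy of Propositions \ref{qpo} and \ref{qzero1}: start from the continuous transonic flow of Proposition \ref{gat}, reduce the degeneracy at $x_1=0$ by a suitable ansatz, and apply the implicit function theorem on each side of the sonic point. The crucial structural difference from Proposition \ref{qzero1} is that the natural reduction ansatz here uses the \emph{even} exponent $x_1^{2m}$ rather than the odd exponent $x_1^{2m+1}$, so the reduced unknown $y$ must change sign across $0$, producing the two branches $y_\pm$ with a jump at the sonic point.

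First, I would note that $F(x_1,t;J)=\tilde F(a(x_1),t;J)$ for a smooth $\tilde F$. Using \eqref{qu1d201} and Taylor-expanding $\tilde F$ in $a$ about $a(0)$ and in $t$ about $c_*$ yields
\be\no
F(x_1,t;J)=\tfrac{1+\gamma}{2}(t-c_*)^2-\tfrac{a^{(4m)}(0)}{(4m)!}\tfrac{c_*^2}{a(0)}\,x_1^{4m}+G_m(x_1,t-c_*),
\ee
with a smooth remainder satisfying
\be\no
|G_m(x_1,t-c_*)|\le C(|t-c_*|^3+|x_1|^{4m}|t-c_*|+|x_1|^{4m+1})
\ee
for $|x_1|+|t-c_*|$ small. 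Substituting the ansatz $\bar u(x_1)=c_*+x_1^{2m}y(x_1)$ into $F(x_1,\bar u;J)=0$ and dividing by $x_1^{4m}$ then produces
\be\no
y^2-\mu^2+\tfrac{2}{\gamma+1}H_m(x_1,y)=0,\qquad \mu:=c_*\sqrt{\tfrac{2\,a^{(4m)}(0)}{(\gamma+1)(4m)!\,a(0)}},
\ee
where $H_m(x_1,y):=x_1^{-4m}G_m(x_1,x_1^{2m}y)$ extends by continuity to a jointly $C^\infty$ function on a neighborhood of $(0,\pm\mu)$ with $H_m(0,\cdot)\equiv 0$.

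Next, observing that monotonicity of the accelerating transonic flow forces $y<0$ on the subsonic side and $y>0$ on the supersonic side, I would apply the implicit function theorem separately to
\be\no
H_\pm(x_1,y):=y\mp\sqrt{\mu^2-\tfrac{2}{\gamma+1}H_m(x_1,y)}
\ee
at the points $(0,\pm\mu)$. Since $H_m(0,\pm\mu)=0$ and $\partial_y H_m(0,\pm\mu)=0$ (because $H_m(0,\cdot)\equiv 0$), one computes $H_\pm(0,\pm\mu)=0$ and $\partial_y H_\pm(0,\pm\mu)=1$. The implicit function theorem thus produces unique smooth solutions $y_\pm$ near $0$ with $y_\pm(0)=\pm\mu$. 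By the uniqueness in Proposition \ref{gat}, one has $\bar u=c_*+x_1^{2m}y_-$ on $[-\sigma,0]$ and $\bar u=c_*+x_1^{2m}y_+$ on $[0,\sigma]$; combining with the smoothness of $(\bar\rho,\bar u)$ away from $0$ yields the announced solution on $[L_0,L_1]$, with $y_-\in C^\infty([L_0,0])$ and $y_+\in C^\infty([0,L_1])$.

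Finally, the Leibniz rule applied to $\bar u-c_*=x_1^{2m}y_\pm$ gives $\bar u^{(k)}(0\pm)=0$ for $1\le k\le 2m-1$ and $\bar u^{(2m)}(0\pm)=(2m)!\,y_\pm(0)=\pm(2m)!\,\mu$, so $\bar u$ is $C^{2m-1}$ at $0$ with Lipschitz $(2m-1)$-th derivative, giving $\bar u\in C^{2m-1,1}([L_0,L_1])$. The main technical obstacle is to verify that $H_m$ extends to a jointly $C^\infty$ function across $x_1=0$ and that $\partial_y H_\pm(0,\pm\mu)\neq 0$; both follow from the composition structure $F=\tilde F\circ(a(\cdot),t)$ together with the vanishing $a''(0)=\cdots=a^{(4m-1)}(0)=0$, which ensures that every $x_1$-dependent coefficient in the Taylor expansion of $F$ about $(0,c_*)$ carries at least $4m$ factors of $x_1$, so that division by $x_1^{4m}$ yields a genuinely smooth function at $x_1=0$.
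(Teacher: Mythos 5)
Your proposal is correct and follows essentially the same route as the paper's proof: the Taylor expansion of $F$ around $(0,c_*)$ using \eqref{qu1d201}, the even-power ansatz $\bar u=c_*+x_1^{2m}y(x_1)$, division by $x_1^{4m}$, and separate applications of the implicit function theorem at $(0,\pm\mu)$ to produce the two branches $y_\pm$. The paper omits the verification that $\partial_y H_\pm(0,\pm\mu)=1$ and the Leibniz computation giving $\bar u^{(k)}(0\pm)=0$ for $1\le k\le 2m-1$ and $\bar u^{(2m)}(0\pm)=\pm(2m)!\,\mu$, referring back to Proposition \ref{qpo}; your write-up simply fills in these details, and they check out.
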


\begin{proof}

It follows from \eqref{qu1d201} and Taylor's expansion that
\begin{eqnarray}\label{qu1d202}
F(x_1,t;c_*)= \frac{1}{2}(1+\gamma) (t-c_*)^2-\frac{1}{(4m)!} \frac{c_*^2 a^{(4m)}(0)}{a(0)}x_1^{4m} + G_m(x_1,t-c_*),
\end{eqnarray}
where for some positive constants $C_3$ and $\sigma_3$
\be\no
|G_m(x_1,t-c_*)|\leq C_3 (|t-c_*|^3+|t-c_*||x_1|^{4m}+|x_1|^{4m+1}),\ \  \text{for any } |t-c_*|+|x_1|\leq \sigma_3.
\ee
We will find the solution $\bar{u}(x_1)$ to \eqref{qu1d} with the form $\bar{u}(x_1)=c_*+x_1^{2m} y(x_1)$, where $y$ is defined on $[L_0,L_1]$ with a discontinuity at $x_1=0$:
\be\no
y(x_1)=\begin{cases}
y_-(x_1)<0,\ \ x_1\in [L_0, 0),\\
y_+(x_1)>0,\ \ x_1\in (0, L_1].
\end{cases}\ee
This, together with \eqref{qu1d202}, implies that the equation $F(\bar{u}(x_1),x_1;c_*)=0$ becomes
\be\no
y^2(x_1)-\frac{2}{(\gamma+1)}\frac{a^{(4m)}(0)}{(4m)!} \frac{c_*^2}{a(0)}+ \frac{2}{\gamma+1} H_m(x_1, y(x_1))=0,
\ee
where
\be\no
|H_m(x_1,y)|=\left|\frac{1}{x_1^{4m}}G_m(x_1, x_1y)\right|\leq C_2(|x_1|^{2m} |y|+ |x_1|),\ \ \text{for any }|x_1|\leq \sigma_3.
\ee
Since $y_-(x_1)<0$ and $y_+(x_1)>0$ for $x_1\in [-\sigma_3,0]$ and $x_1\in [0,\sigma_3]$ respectively, thus
\be\no
&&y_-(x_1)=-\sqrt{\frac{2}{(\gamma+1)}\frac{a^{(4m)}(0)}{(4m)!} \frac{c_*^2}{a(0)}  - \frac{2}{\gamma+1} H_m(x_1, y_1(x_1))},\ \ \forall x_1\in [-\sigma_3,0],\\\no
&&y_+(x_1)=\sqrt{\frac{2}{(\gamma+1)}\frac{a^{(4m)}(0)}{(4m)!} \frac{c_*^2}{a(0)} - \frac{2}{\gamma+1} H_m(x_1, y_2(x_1))},\ \ \forall x_1\in [0,\sigma_3].
\ee
The existence and uniqueness of $y_-$ and $y_+$ on $x_1\in [-\sigma_3,0)$ and $x_1\in(0,\sigma_3]$ respectively can also be obtained as in the proof of Proposition \ref{qpo}.

\end{proof}

\begin{remark}
{\it Let $m=1$ in Proposition \ref{qzero2}. Then under the assumption that $a'(0)=a''(0)=a^{(3)}(0)=0, a^{(4)}(0)>0$, there exists a $C^{1,1}$ smooth transonic flow on $[L_0,L_1]$ with zero acceleration at $x_1=0$ where the flow becomes sonic. This corresponds with the result obtained in \cite{WX2019,WX2020}, where regular transonic potential flows with zero acceleration at the sonic points are obtained under the following flatness condition near the throat $a''_{\pm}(x_1)=o(x_1^2)$ as $x_1\to 0$ where $a_{+}(x_1)$ and $a_-(x_1)$ represent the upper and lower walls of the nozzle, respectively.
}\end{remark}

Finally, if $a'(x_1)$ or higher odd order derivatives of $a(x_1)$ has a discontinuity at $x_1=0$, then the following existence result holds.

\begin{proposition}\label{qzero3} Suppose that the initial data $(\rho_0,u_0)$ is subsonic and the function $a(x_1)$ satisfies \eqref{qu1} and \eqref{qu1d4}, where the second condition in \eqref{qu1} is replaced by
\be\label{qu1d301}\begin{cases}
\displaystyle a''(0)=a^{(3)}(0)=\cdots =a^{(2m)}(0)=0, \ \ a^{(2m+1)}(0+)=\lim_{x_1\to 0+}a^{(2m+1)}(x_1)>0,\\
\displaystyle a^{(2m+1)}(0-)=\lim_{x_1\to 0-}a^{(2m+1)}(x_1)<0\ \ \text{for some integer $m\geq 0$}.
\end{cases}\ee
Then there exists a unique $C^{m,\frac12}$ smooth accelerating transonic solution $(\bar{\rho}(x_1), \bar{u}(x_1))\in C^{m,\f12}([L_0,L_1])$ to \eqref{qu1d} such that the solution is subsonic in $[L_0,0)$, supersonic in $(0, L_1]$ with a sonic state at $x_1=0$. The velocity can be represented as $\bar{u}(x_1)=c_*+ |x_1|^{m+\frac12} y(x_1)$, where the function $y$ is defined on $[L_0,L_1]$ with a discontinuity at $x_1=0$:
\be\no
y(x_1)=\begin{cases}
y_-(x_1)<0,\ \ x_1\in [L_0, 0),\\
y_+(x_1)>0,\ \ x_1\in (0, L_1],
\end{cases}\ee
with $y_-\in C^{\infty}([L_0,0])$ and $y_+\in C^{\infty}([0,L_1])$. And $\bar{u}'(0)=\bar{u}''(0)=\cdots= \bar{u}^{(m)}(0)=0$.
\end{proposition}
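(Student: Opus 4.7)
The strategy closely parallels the proofs of Propositions \ref{qpo}, \ref{qzero1} and \ref{qzero2}: use Proposition \ref{gat} to produce a continuous transonic flow smooth on $[L_0,0)\cup(0,L_1]$, then analyze the behavior at the sonic point by recasting $F(x_1,\bar u(x_1);J)=0$ through a suitable substitution and invoking the implicit function theorem on each side of $0$ separately. The new feature is that $a$ is only $C^{2m}$ at $x_1=0$, so the Taylor expansion of $a$ has \emph{different} leading coefficients on the two sides.

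First, since $a$ is smooth on each of $[L_0,0]$ and $[0,L_1]$, the one-sided Taylor expansion under \eqref{qu1d301} yields, for $\pm x_1>0$,
\be\no
a(x_1)=a(0)+\frac{a^{(2m+1)}(0\pm)}{(2m+1)!}\,x_1^{2m+1}+O(|x_1|^{2m+2}).
\ee
Observe that $a^{(2m+1)}(0\pm)\,x_1^{2m+1}=|a^{(2m+1)}(0\pm)|\,|x_1|^{2m+1}$ in both cases, which is consistent with $a$ attaining its minimum at $0$. Plugging into the definition \eqref{q2} of $F$ and using $c_*^{\gamma+1}=\gamma J^{\gamma-1}/(a(0))^{\gamma-1}$, the analogue of \eqref{qu1d202} becomes the one-sided expansion
\be\no
F(x_1,t;J)=\frac{1+\gamma}{2}(t-c_*)^2-\frac{c_*^2|a^{(2m+1)}(0\pm)|}{a(0)\,(2m+1)!}|x_1|^{2m+1}+G_\pm(x_1,t-c_*),
\ee
where, for some constants $C,\sigma>0$,
\be\no
|G_\pm(x_1,t-c_*)|\le C\bigl(|t-c_*|^3+|t-c_*|\,|x_1|^{2m+2}+|x_1|^{2m+2}\bigr),\q |t-c_*|+|x_1|\le\sigma.
\ee

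Second, motivated by the balance $(t-c_*)^2\sim|x_1|^{2m+1}$, I make the ansatz $\bar u(x_1)=c_*+|x_1|^{m+1/2}y(x_1)$. Substituting and dividing by $|x_1|^{2m+1}$ turns $F(x_1,\bar u;J)=0$ into
\be\no
y^2-\mu_\pm^2+\frac{2}{\gamma+1}H_m^\pm(x_1,y)=0,\q \mu_\pm\co c_*\sqrt{\frac{2|a^{(2m+1)}(0\pm)|}{(\gamma+1)\,a(0)\,(2m+1)!}},
\ee
with the pointwise bound $|H_m^\pm(x_1,y)|\le C(|x_1|^{m+1/2}|y|^3+|x_1|^{m+3/2}|y|+|x_1|)$, so $H_m^\pm(0,y)=0$ and $\p_y H_m^\pm(0,\pm\mu_\pm)=0$. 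Seeking $y_+>0$ on $[0,\sigma]$ (supersonic side) and $y_-<0$ on $[-\sigma,0]$ (subsonic side), this is equivalent to
\be\no
y_\pm(x_1)=\pm\sqrt{\mu_\pm^2-\tfrac{2}{\gamma+1}H_m^\pm(x_1,y_\pm(x_1))},
\ee
to which the implicit function theorem applies at $(0,\pm\mu_\pm)$ separately, as in the proofs of Propositions \ref{qpo} and \ref{qzero2}, producing smooth positive $y_+\in C^\infty([0,\sigma])$ and smooth negative $y_-\in C^\infty([-\sigma,0])$. Uniqueness of the accelerating transonic flow from Proposition \ref{gat} then identifies $c_*+|x_1|^{m+1/2}y(x_1)$ with $\bar u(x_1)$ on $[-\sigma,\sigma]$, and since $\bar u$ was already smooth away from $0$, this produces the stated global representation. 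The regularity $\bar u\in C^{m,1/2}([L_0,L_1])$ and the vanishing of $\bar u',\ldots,\bar u^{(m)}$ at $0$ then follow directly from the factor $|x_1|^{m+1/2}$ multiplied by smooth functions on each side.

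The principal difficulty is that, unlike Propositions \ref{qzero1}--\ref{qzero2}, one cannot find a single smooth $y$ extending across $x_1=0$: the two leading coefficients $|a^{(2m+1)}(0+)|$ and $|a^{(2m+1)}(0-)|$ need not agree, so $\mu_+\ne\mu_-$ in general, and $\bar u^{(m+1)}$ already fails to exist in the classical sense at $0$. One must therefore carry out the implicit function argument on each half-interval independently and verify that the matching at $0$ delivers exactly $C^{m,1/2}$ regularity (no more, no less). A secondary bookkeeping point is to check that, because $a$ itself is only $C^{2m}$ at $0$, the remainders $G_\pm$ and $H_m^\pm$ defined on the two half-neighborhoods are $C^\infty$ up to the boundary $x_1=0$ on their respective sides, which is what makes the half-sided implicit function theorem applicable; this follows from the one-sided smoothness of $a$ already postulated in \eqref{qu1d301}.
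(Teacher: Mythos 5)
Your proposal follows the same route as the paper: a one-sided Taylor expansion of $F$ under \eqref{qu1d301}, the ansatz $\bar u=c_*+|x_1|^{m+1/2}y(x_1)$ with the sign observation $a^{(2m+1)}(0\pm)x_1^{2m+1}=|a^{(2m+1)}(0\pm)|\,|x_1|^{2m+1}$, a one-sided implicit function argument for $y_\pm$ around $\pm\mu_\pm$ as in Proposition \ref{qpo}, and identification with the unique accelerating flow of Proposition \ref{gat}. The only slip is a minor exponent in the remainder bound (the cross term is $O(|t-c_*|\,|x_1|^{2m+1})$, coming from $[\phi(x_1)-\phi(0)]\psi'(c_*)(t-c_*)$, not $O(|t-c_*|\,|x_1|^{2m+2})$), which does not affect the argument since $H_m^\pm(0,y)=0$ still holds.
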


\begin{proof}

It follows from \eqref{qu1d301} and Taylor's expansion that
\begin{eqnarray}\label{qu1d302}
F(x_1,t;c_*)=\begin{cases}
\frac{1}{2}(1+\gamma) (t-c_*)^2-\frac{1}{(2m+1)!} \frac{\gamma J^{\gamma+1}}{c_*^{\gamma-1}}\frac{a^{(2m+1)}(0-)}{(a(0))^{\gamma}}x_1^{2m+1} + G_m^-(x_1,t-c_*), x_1\in [L_0,0)\\
\frac{1}{2}(1+\gamma) (t-c_*)^2-\frac{1}{(2m+1)!} \frac{\gamma J^{\gamma+1}}{c_*^{\gamma-1}}\frac{a^{(2m+1)}(0+)}{(a(0))^{\gamma}}x_1^{2m+1} + G_m(x_1,t-c_*), x_1\in (0,L_1]
\end{cases}\end{eqnarray}
where for some positive constants $C_3$ and $\sigma_3$
\be\no
|G_m^{\pm}(x_1,t-c_*)|\leq C_3 (|t-c_*|^3+|t-c_*||x_1|^{2m+1}+|x_1|^{2m+2}),\ \  \text{for any } |t-c_*|+|x_1|\leq \sigma_3.
\ee
We will find the solution $\bar{u}(x_1)$ to \eqref{qu1d} with the form $\bar{u}(x_1)=c_*+|x_1|^{m+\f12} y(x_1)$, where $y$ is defined on $[L_0,L_1]$ with a discontinuity at $x_1=0$:
\be\no
y(x_1)=\begin{cases}
y_-(x_1)<0,\ \ x_1\in [L_0, 0),\\
y_+(x_1)>0,\ \ x_1\in (0, L_1].
\end{cases}\ee
This, together with \eqref{qu1d202}, implies that the equation $F(\bar{u}(x_1),x_1;c_*)=0$ becomes
\be\no\begin{cases}
y_-^2(x_1)+\frac{2}{(\gamma+1)}\frac{a^{(2m+1)}(0-)}{(2m+1)!} \frac{c_*^2}{a(0)}+ \frac{2}{\gamma+1} H_m^-(x_1, y(x_1))=0,\ \ x_1\in [-\si_3,0],\\
y_+^2(x_1)-\frac{2}{(\gamma+1)}\frac{a^{(2m+1)}(0+)}{(2m+1)!} \frac{c_*^2}{a(0)}+ \frac{2}{\gamma+1} H_m^+(x_1, y(x_1))=0,\ \ x_1\in [0,\si_3],
\end{cases}\ee
where
\be\no
|H_m^{\pm}(x_1,y)|=\left|\frac{1}{x_1^{2m+1}}G_m^{\pm}(x_1,x_1y)\right|\leq C_2(|x_1|^{m+\f12} |y|+ |x_1|),\ \ \text{for any }|x_1|\leq \sigma_3.
\ee
Since $y_-(x_1)<0$ and $y_+(x_1)>0$ for $x_1\in [-\sigma_3,0]$ and $x_1\in [0,\sigma_3]$ respectively, thus
\be\no
&&y_-(x_1)=-\sqrt{\frac{2}{(\gamma+1)}\frac{-a^{(2m+1)}(0-)}{(2m+1)!} \frac{c_*^2}{a(0)} - \frac{2}{\gamma+1} H_m^-(x_1, y_1(x_1))},\ \ \forall x_1\in [-\sigma_3,0],\\\no
&&y_+(x_1)=\sqrt{\frac{2}{(\gamma+1)}\frac{a^{(2m+1)}(0+)}{(2m+1)!} \frac{c_*^2}{a(0)} - \frac{2}{\gamma+1} H_m^+(x_1, y_2(x_1))},\ \ \forall x_1\in [0,\sigma_3].
\ee
The existence and uniqueness of $y_-$ and $y_+$ on $x_1\in [-\sigma_3,0)$ and $x_1\in(0,\sigma_3]$ respectively can also be obtained as in the proof of Proposition \ref{qpo}.

\end{proof}

\begin{remark}
{\it Let $m=0$ in Proposition \ref{qzero3}. Then under the assumption that $a'(0-)<0, a'(0+)>0$, there exists a unique $C^{0,\frac12}$ H\"{o}lder continuous transonic flow on $[L_0,L_1]$ whose acceleration blows up at the sonic point $x_1=0$. This corresponds with the result obtained in \cite{WX2013}, where $C^{0,\frac12}$ H\"{o}lder continuous transonic potential flows with infinity acceleration at the sonic points was constructed in symmetric converging nozzles with straight wall. Propositions \ref{qzero1}, \ref{qzero2} and \ref{qzero3} further indicates the close relation between the degeneracy rate of the velocity field near the sonic points and the degree of the flatness for the nozzle wall near the throat.
}\end{remark}

\begin{remark}
{\it As in \cite{WX2019,WX2020}, for the smooth transonic flows obtained in Propositions \ref{qpo}, \ref{qzero1},\ref{qzero2} and \ref{qzero3}, all the sonic points are exceptional and characteristic degenerate from subsonic region. These are quite different from the smooth transonic spiral flows constructed in \cite{WXY21a,WXY21b} in an annulus, where all the sonic points  are nonexceptional and noncharacteristically degenerate.}
\end{remark}

As an application of Proposition \ref{gat}, one can establish the existence and uniqueness of the transonic shock flow patterns in de Laval nozzles as described in Courant and Friedrichs \cite[Section 147]{Courant1948}: if an
upcoming flow starting from a subsonic state at the entrance, will accelerate due to the converging effect of the nozzle and attain the sonic state at the throat of the nozzle, and become supersonic in the divergent part of the nozzle, to match the prescribed appropriately large pressure at the exit, a shock front must intervene at some place in the divergent part of the nozzle and the gas is compressed and slowed down to subsonic speed.

The mathematical formulation of such a transonic shock phenomena is as follows. One looks for piecewise smooth functions $(\bar{\rho}^{\pm}, \bar{u}^{\pm})$ defined on $I^-=(L_0, L_s)$, $I^+=(L_{s}, L_1)$ respectively, which solve \eqref{quasi} on $I^{\pm}$ with a shock $x_1=L_{s}\in (0, L_1)$ satisfying the physical entropy condition $[ p({\bar\rho}(L_{s}))]=\bar{p}^+(L_s)-\bar{p}^-(L_s)>0$ and the Rankine-Hugoniot conditions
\be\label{rh}\begin{cases}
[ {\bar \rho} {\bar u}](L_s)=0,\\
[{\bar\rho} {\bar u}^2+P({\bar\rho})] (L_s)=0.
\end{cases}\ee
and also the boundary conditions
\begin{eqnarray}\label{bd1}
&&\rho(L_0)=\rho_0,\ u(L_0)=u_0>0,\\\label{bd2}
&&p(L_1)= p_{e}.
\end{eqnarray}

Then we have the following existence and uniqueness theorem for the transonic shock phenomena described by the quasi one-dimensional model \eqref{quasi}.
\begin{proposition}\label{qu1dshock} Suppose that the initial state $(\rho_0,u_0)$ at $x_1=L_0>0$ is subsonic and the function $a(x_1)$ satisfying \eqref{qu1} and \eqref{qu1d4}. Then there exist two positive constants $0<p_{min}<p_{max}$ such that for the end pressure $p_e\in (p_{min}, p_{max})$, the transonic shock problem described as above has a unique solution $(\bar{\rho}^{\pm},\bar{u}^{\pm})$ in the sense that $(\bar{\rho}^-,\bar{u}^-)$ and $(\bar{\rho}^+,\bar{u}^{+})$ are smooth and defined on $[L_0,0)\cup (0,L_s]$ and $I^+=[L_s, L_1]$ respectively, with a shock located at $x_1=L_{s}\in (0,L_1)$, which satisfy the equations \eqref{quasi}, the Rankine-Hugoniot jump condition \eqref{rh}, and the boundary conditions \eqref{bd1}-\eqref{bd2}.

Moreover, the flow $(\bar{\rho}^-,\bar{u}^-)\in C([L_0,L_s])$ is subsonic on $[L_0, 0)$ and becomes sonic at $x_1=0$ and then accelerates to be supersonic on $(0,L_s]$. The flow $(\bar{\rho}^+,\bar{u}^+)$ is subsonic on $[L_s, L_1]$. In addition, the shock position $x_1=L_s$ increases as the exit pressure $p_{e}$ decreases. Furthermore, the shock position $L_s$ approaches to $L_1$ if $p_{e}$ goes to $p_{min}$ and $L_s$ tends to $0$ if $p_{e}$ goes to $p_{max}$.
\end{proposition}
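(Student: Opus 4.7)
The plan is to reduce the free-boundary shock problem to a one-parameter problem in the shock location $L_s\in(0,L_1)$ and then invert the resulting exit-pressure map by the intermediate value theorem. The upstream flow is taken to be the accelerating transonic flow on $[L_0,L_1]$ produced by Proposition~\ref{gat}, whose restriction to $[L_0,L_s]$ supplies $(\bar{\rho}^-,\bar{u}^-)$. For each candidate $L_s\in(0,L_1)$ the state $(\bar{\rho}^-,\bar{u}^-)(L_s)$ is strictly supersonic, so the Rankine--Hugoniot conditions \eqref{rh} together with the entropy condition $[P]>0$ determine a unique Lax-admissible subsonic state $(\bar{\rho}^+,\bar{u}^+)(L_s)$ by the standard polytropic shock computation, and the mass flux $J=\rho_0 u_0 a(L_0)$ is preserved across the jump. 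Integrating \eqref{quasi} forward from this subsonic datum to $x_1=L_1$ is a regular (non-singular) ODE problem; moreover, formula \eqref{qu1d103} read on the subsonic branch in the divergent part (where $1-\bar{M}^2>0$ and $b=a'/a>0$) yields $d(\bar{M}^+)^2/dx_1<0$, so the flow remains strictly subsonic on $[L_s,L_1]$. This produces a smooth subsonic profile $(\bar{\rho}^+,\bar{u}^+)\in C^\infty([L_s,L_1])$ for every $L_s\in(0,L_1)$.

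\textbf{Monotonicity of the exit pressure.} Define $\Phi(L_s):=P(\bar{\rho}^+(L_1;L_s))$. Since $J$ is fixed, the downstream flow obeys a shifted Bernoulli relation $\tfrac{1}{2}(\bar{u}^+)^2+\tfrac{\gamma}{\gamma-1}(\bar{\rho}^+)^{\gamma-1}\equiv B_+(L_s)$, and the endpoint value $\bar{\rho}^+(L_1;L_s)$ is the unique subsonic root of a single algebraic equation in $\rho$ involving only $a(L_1)$, $J$ and $B_+(L_s)$. On this subsonic branch the root, and hence $\Phi$, is a monotone increasing function of $B_+$, so the key reduction is to prove that $B_+(L_s)$ is strictly decreasing in $L_s$. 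Two facts yield this: (i) the upstream Mach number $\bar{M}^-(L_s)$ is strictly increasing on $(0,L_1]$, by \eqref{qu1d103} applied on the supersonic branch (where $M^2>1$ and $b>0$ force $dM^2/dx_1>0$); and (ii) for a compressive polytropic shock with fixed incoming Bernoulli constant $B_0$ and mass flux $J$, the Bernoulli drop $B_0-B_+$ is a strictly increasing function of the upstream Mach number, which follows from direct algebraic manipulation of $[\rho u]=0$ and $[\rho u^2+P]=0$ and the relation $M_-^2=s(s^\gamma-1)/(\gamma(s-1))$ (where $s=\bar{\rho}^+/\bar{\rho}^->1$ is the density ratio). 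Combined with the smooth dependence of every quantity on $L_s$, these facts give that $\Phi$ is continuous and strictly decreasing on $(0,L_1)$. I expect (ii), the quantitative monotonicity of the Bernoulli jump in the upstream Mach number, to be the main technical step requiring real work; everything else is bookkeeping.

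\textbf{Limits and conclusion via the intermediate value theorem.} Set $p_{\max}:=\lim_{L_s\to 0^+}\Phi(L_s)$ and $p_{\min}:=\lim_{L_s\to L_1^-}\Phi(L_s)$. As $L_s\to 0^+$ the upstream Mach tends to $1$, the jump degenerates, $(\bar{\rho}^+,\bar{u}^+)(L_s)\to(\bar{\rho}^-(0),c_*)$, and integrating the subsonic branch from the sonic state at $x_1=0$ out to $x_1=L_1$ returns $p_{\max}$ (i.e. the ``missed'' subsonic branch through the throat). As $L_s\to L_1^-$ the downstream interval collapses to a point and $p_{\min}$ is simply the immediate post-shock pressure at $x_1=L_1$ with upstream state $(\bar{\rho}^-,\bar{u}^-)(L_1)$. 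Strict monotonicity forces $p_{\min}<p_{\max}$, and applying the intermediate value theorem to the strictly decreasing continuous map $\Phi:(0,L_1)\to(p_{\min},p_{\max})$ produces, for each $p_e\in(p_{\min},p_{\max})$, exactly one $L_s\in(0,L_1)$ with $\Phi(L_s)=p_e$. Reading off the construction furnishes the stated existence and uniqueness, the strictly decreasing dependence of $L_s$ on $p_e$, and the two boundary limits $L_s\to L_1$ as $p_e\to p_{\min}$ and $L_s\to 0$ as $p_e\to p_{\max}$.
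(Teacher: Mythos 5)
Your proposal is correct and follows the same shooting-plus-intermediate-value-theorem strategy that the paper invokes by pointing to Courant--Friedrichs \cite{Courant1948} and Xin--Yin \cite{xy08b}; since the paper supplies no details beyond those citations, your write-up actually fills in the argument. The one adaptation worth recording is that those references treat the full (non-isentropic) Euler system, where energy is a third Rankine--Hugoniot condition so the Bernoulli quantity is continuous across the shock and the entropy jump drives the shooting map, whereas in the isentropic model \eqref{quasi} only mass and momentum jump in \eqref{rh}, so the Bernoulli constant is the quantity that drops across the shock --- making your step (ii), the strict monotonicity of $B_0-B_+$ in the upstream Mach number, the correct replacement lemma, and your identification of it as the piece requiring a real computation (via $M_-^2=s(s^\gamma-1)/(\gamma(s-1))$ and the ratio $B_+/B_0$ as a function of the compression $s$) is on target.
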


The existence and uniqueness of a radially symmetric transonic shock solution to the steady Euler system in a divergence sector or circular cone had been proved in \cite{Courant1948,xy08b}. Since the existence and uniqueness of continuous accelerating transonic flows to \eqref{quasi} on $[L_1,L_s)$ has been proved in Proposition \ref{gat}, the existence of transonic shock downstream flow $(\bar{\rho}^{+},\bar{u}^{+})$ and the shock position $x_1=L_s$ can be proved in a similar way as in \cite{Courant1948,xy08b}, which leads to the proof of Proposition \ref{qu1dshock}. The structural stability of multidimensional transonic shocks in the absence of sonic state in flat or divergent nozzles were extensively studied in the past twenty years and have obtained many interesting and important progress. One may refer to \cite{cf03, xy05, xy08a} for the stability of transonic shocks using the potential flows with different kinds of boundary conditions, and refer to \cite{chen08,fx21,lxy09a,lxy09b,lxy13,wxx21} and the references therein for the stability analysis using the steady Euler equations with the exit pressure condition. In particular, in our recent paper \cite{wx23}, we established the existence and stability of cylindrical transonic shock solutions under three dimensional perturbations of the incoming flows and the exit pressure without any further restrictions on the background transonic shock solutions. The strength and position of the perturbed transonic shock are completely determined by the incoming flows and the exit pressure.

\subsection{Smooth transonic flows with nonzero vorticity for the quasi two dimensional steady Euler flow model}\label{formulation}\noindent

Note that the one dimensional smooth transonic flow patterns with positive acceleration at the sonic point $x_1=0$ in Proposition \ref{qpo} are also special solutions to the quasi two dimensional model \eqref{q2deuler}. In this section, we further establish the structural stability of such transonic flow patterns under suitable two dimensional perturbations of the boundary conditions at the entrance and exit of the nozzle for the quasi two dimensional model \eqref{q2deuler}.

The boundary conditions we should prescribe take the form
\be\label{qbcs}\begin{cases}
B(L_0,x_2)=B_0+ \epsilon B_{in}(x_2),\ \ \ &\forall x_2\in [-1,1],\\
u_2(L_0,x_2)= \epsilon h_1(x_2), \ \ &\forall x_2\in [-1,1],\\
u_2(x_1, \pm 1)=0, \ \ &\forall x_1\in [L_0,L_1],
\end{cases}\ee
where $B_{in}\in C^{4,\alpha}([-1,1])$, $h_1\in C^{3,\alpha}([-1,1])$ satisfying the compatibility conditions
\be\label{cp1}
h_1(\pm 1)=h_1''(\pm 1)=0,\ \ B_{in}'(\pm 1)= B_{in}^{(3)}(\pm 1)=0.
\ee
Note here that due to the Bernoulli's law \eqref{ber}, it is natural to prescribe the Bernoulli's quantity at the entrance, and the third condition in \eqref{qbcs} is just the slip boundary condition on the walls. We prescribe some restrictions on the flow angle (i.e. the second equation in \eqref{qbcs}) at the entrance, which is physically acceptable and experimentally controllable. The last two boundary conditions in \eqref{qbcs} are also admissible for the linearized mixed type potential equation from the mathematical point of view (see Lemma \ref{qH1estimate}), and are helpful to yield the important basic energy estimates. There is no need to prescribe any boundary conditions at the exit of the nozzle.

The following theorem states structural stability of the quasi one dimensional transonic flow pattern, which also yields the existence and uniqueness of smooth transonic flows with nonzero vorticity and positive acceleration to the quasi two dimensional model \eqref{q2deuler}.
\begin{theorem}\label{2dmain}
{\it Let $(\bar{\rho},\bar{u})$ be a smooth transonic flow with positive acceleration at the sonic $x_1=0$ given in Proposition \ref{qpo}. Assume that $\gamma>1$, $h_1\in C^{3,\alpha}([-1,1])$ and $B_{in}\in C^{4,\alpha}([-1,1])$ for some $\alpha\in (0,1)$ satisfy \eqref{cp1}. Then there exists a small
constant $\epsilon_0$ depending on the background flow and the boundary datum $h_1, B_{in}$, such that for any $0<\epsilon<\epsilon_0$, the problem \eqref{q2deuler} with \eqref{qbcs} has a unique smooth transonic solution with nonzero vorticity $(u_1,u_2, B)\in (H^3(\Omega))^2\times H^4(\Omega)$, which satisfies the estimate
\be\label{q2d10}
\|u_1-\bar{u}\|_{H^3(\Omega)}+\|u_2\|_{H^3(\Omega)}+\|B-B_0\|_{H^4(\Omega)}\leq C\epsilon,
\ee
for some constant $C$ depending only on the background flow and the boundary datum.

Moreover, all the sonic points form a $C^1$ smooth curve given by $x_1=\xi(x_2)\in C^{1}([-1,1])$. The sonic curve is closed to the background sonic line $x_1=0$ in the sense that
\be\label{qsonic}
\|\xi(x_2)\|_{C^1([-1,1])}\leq C\epsilon.
\ee
}\end{theorem}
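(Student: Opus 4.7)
The plan is to apply a deformation-curl decomposition to the quasi two dimensional system \eqref{q2deuler}: the Bernoulli quantity $B$ and the vorticity $\omega=\partial_{x_1}u_2-\partial_{x_2}u_1$ are determined by a transport equation and an algebraic (Crocco-type) relation, while the remaining mass equation, after eliminating $\rho$ through the Bernoulli law, reduces to a scalar second order mixed type equation for the velocity. A contraction iteration around the one dimensional background $(\bar\rho,\bar u,0,B_0)$ then produces the full nonlinear solution. More precisely, since $B$ is conserved along streamlines by \eqref{ber} and the perturbed $u_1$ remains strictly positive, every streamline enters from $\{x_1=L_0\}$; hence $B$ is recovered from $B_0+\epsilon B_{in}(x_2)$ by the characteristic method, which furnishes the $H^4$ control in \eqref{q2d10} once the flow map of $(u_1,u_2)$ is under control. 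Combining the two momentum equations in \eqref{q2deuler} with the Bernoulli relation produces an algebraic formula expressing $\omega$ in terms of $\partial_{x_2}B$ and the velocity, so $\omega$ is also controlled by $B$. Substituting $\rho=\rho(u_1^2+u_2^2,B)$ into the continuity equation and coupling it with the curl identity $\partial_{x_1}u_2-\partial_{x_2}u_1=\omega(u,B)$ closes the problem for $(u_1,u_2)$ as a single second order equation with lower order source terms.

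I would then linearize around $(\bar u,0)$. The principal operator for $w_1=u_1-\bar u$, $w_2=u_2$ is of Tricomi/Keldysh type, elliptic for $x_1<0$, hyperbolic for $x_1>0$, and characteristically degenerate at the background sonic line $x_1=0$. The boundary conditions \eqref{qbcs}, prescribing $B$ and the flow angle at the entrance and the slip condition on the walls with a free exit, are exactly the well-posed data for this mixed type operator. The key analytic step is to derive an $H^1$ energy estimate by testing with a multiplier of the form $(\alpha(x_1)\partial_{x_1}+\beta(x_1))w$; the assumption $a''(0)>0$ from Proposition \ref{qpo}, equivalently $\bar u'(0)=\mu>0$, makes the coefficient $c^2(\bar\rho)-\bar u^2$ behave like $-c_1 x_1$ near the sonic point with $c_1>0$, which is precisely what allows $\alpha,\beta$ to be chosen so that the resulting quadratic form is coercive simultaneously on the subsonic and supersonic sides and compatible with the prescribed data. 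Higher order ($H^3$) estimates follow by commuting the tangential derivative $\partial_{x_2}$ with the equation and using the compatibility conditions \eqref{cp1}. The nonlinear solution is then constructed by iteration: given $(u_1^n,u_2^n,B^n)$ in a ball of radius $C\epsilon$, first update $B^{n+1}$ and $\omega^{n+1}$ by transport, then solve the linearized mixed type problem for $(u_1^{n+1},u_2^{n+1})$; the estimates above give uniform boundedness and contraction in a lower Sobolev norm yields the unique fixed point satisfying \eqref{q2d10}.

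For the sonic curve, set $\Phi(x_1,x_2)=u_1^2+u_2^2-c^2(\rho)$. At the background one computes $\partial_{x_1}\Phi|_{(\bar u,0,\,x_1=0)}=(\gamma+1)c_*\bar u'(0)>0$, and by \eqref{q2d10} this derivative remains bounded away from zero for the perturbed solution; the implicit function theorem then yields a $C^1$ graph $x_1=\xi(x_2)$ with $\|\xi\|_{C^1}\lesssim\epsilon$, proving \eqref{qsonic}. The main obstacle is precisely the linear energy estimate for the mixed type operator: one must construct a single multiplier that is positive across the sonic line, compatible with all three boundary conditions in \eqref{qbcs}, and controls the full $H^1$ norm on both sides of $\{x_1=0\}$. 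This is exactly where the positive acceleration is indispensable, for without it $c^2(\bar\rho)-\bar u^2$ fails to have a definite linear sign change at the sonic line and the multiplier degenerates, preventing closure of the estimate. A secondary technical point is to preserve enough regularity through the vorticity-velocity coupling at each iteration step so that the source terms stay in the correct Sobolev class, and to enforce the corner compatibility at $\{L_0\}\times\{\pm 1\}$ inherited from \eqref{cp1}.
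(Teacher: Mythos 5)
Your overall architecture matches the paper's: deformation-curl decomposition, Bernoulli quantity transported from the entrance, a Keldysh-type linear mixed equation for the velocity perturbation, a multiplier of the form $d(x_1)\partial_{x_1}$ whose coercivity rests on the positive acceleration $\bar u'(0)>0$, and an implicit function theorem argument for the sonic graph. However, there are three places where the proposal leaves genuine gaps that would prevent the argument from closing.

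First, you only sketch a priori estimates for the linearized mixed-type equation; you never address \emph{existence} of solutions. For a Keldysh-type operator whose leading coefficient $\bar k_{11}=1-\bar M^2$ vanishes on the sonic line, existence does not follow from an energy inequality alone. The paper has to add a vanishing third-order dissipation $\sigma\partial_{x_1}^3$ with two extra boundary conditions $\partial_{x_1}^2\psi^\sigma(L_0,\cdot)=\partial_{x_1}^2\psi^\sigma(L_1,\cdot)=0$ chosen so the boundary layers are weak enough to yield a $\sigma$-uniform $H^2$ bound, and then passes to a Galerkin/Fourier approximation. This singular-perturbation construction is absent from your plan.

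Second, your claim that the higher-order estimate follows by ``commuting the tangential derivative $\partial_{x_2}$'' is not enough. Commuting $\partial_{x_2}$ controls $\partial_{x_2}$-derivatives, but for $H^4$ you also need $\nabla\partial_{x_1}^2\psi$ and $\nabla\partial_{x_1}^3\psi$; differentiating in $x_1$ changes the lower-order coefficients and, crucially, there is no prescribed boundary condition at $x_1=L_1$ to close the boundary terms produced by the multiplier. The paper handles this by extending the problem to a larger interval $(L_0,L_2)$ on which the modified coefficients $\bar a_{11},\bar a_1$ make the equation uniformly elliptic near the new exit, applying cutoff functions to localize the multiplier identities, and matching overlapping regions; and it then still has to reduce the constant to depend only on $H^3$-norms of the coefficients. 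None of this appears in your sketch, and $\partial_{x_2}$-commutation alone will not produce \eqref{q2d10}.

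Third, the iteration you describe (``update $B^{n+1}$ and $\omega^{n+1}$ by transport, then solve the linear problem'') runs into the derivative-loss problem you flag only as a secondary point, but which is in fact the main obstruction to the rotational case. The vorticity source $F_2=-\partial_{x_2}B/u_1$ is one derivative rougher than $B$, so $(u_1,u_2)\in H^3$ forces $B\in H^4$. Recovering $B$ from the data by the flow map of an $H^3$ velocity, or even directly by $B=\epsilon B_{in}(\phi_{L_0}^{-1}(\phi))$ with $\phi\in H^4(\Omega)$, loses roughly half a derivative because the trace $\phi(L_0,\cdot)$ and its inverse are only Hölder of a lower class. The paper closes this by showing that the first-order system is uniformly elliptic in the subsonic strip near $x_1=L_0$, where $(u_1,u_2)$ is bootstrapped to $C^{3,\alpha}$ and hence $\phi(L_0,\cdot)\in C^{4,\alpha}$; this requires the full $C^{3,\alpha}/C^{4,\alpha}$ regularity hypotheses on $h_1$ and $B_{in}$, and motivates the two-layer iteration with the mixed function space $\Sigma_2=H^4(\Omega)\cap C^{3,\alpha}(\overline{\Omega_{1/3}})\cap C^{4,\alpha}(\overline{\Omega_{1/2}})$. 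Without spelling this out, your fixed-point scheme does not self-map the advertised ball.

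The sonic-curve part of the proposal is essentially correct and matches the paper's argument.
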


\br\label{qsub-reg}
{\it In fact, in the uniformly subsonic flow region $\Omega_{us}$, the regularity of the transonic flows can be improved to be $(u_1,u_2, B)\in
(C^{3,\alpha}(\overline{\Omega_{us}}))^2\times C^{4,\alpha}(\overline{\Omega_{us}})$, where $\Omega_{us}=\{(x_1,x_2): L_0< x_1< -\eta_0, x_2\in [-1,1]\}$ for any $0<\eta_0<|L_0|$.}
\er

\br\label{nonzerovorticity}
{\it
Compared with the existence results of continuous subsonic-sonic or smooth transonic flows obtained in \cite{WX2013,WX2019,WX2020}, the flow constructed in Theorem
\ref{2dmain} has nonzero vorticity and positive acceleration and its sonic curve may not be corresponding to the throat of the nozzle.
}\er

The development of the mathematical theory of transonic flows is closely related to the studies on the boundary value problem for the mixed type partial differential equations. Tricomi \cite{tricomi23} initiated the investigation of the well-posedness of the boundary value problem to the famous Tricomi equation $x_2 \p_{x_1}^2 u+ \p_{x_2}^2 u=0$, which is a mixed elliptic-hyperbolic type PDE. Frankl \cite{frankl45} first revealed the closed connection between this theory and the transonic flow dynamics and attracted much attention of many mathematicians since then. Friedrichs \cite{Friedrichs1958} developed a
general and powerful theory for positive symmetric systems of first order and there have been many important further progress and applications to boundary value problems for equations of mixed type. Kuzmin \cite{Kuzmin2002} had investigated the nonlinear perturbation problem of an accelerating smooth transonic
irrotational basic flow with some artificial boundary conditions in the potential and stream function plane. However, the existence of such a basic flow to the Chaplygin equation was not shown and the physical meaning of the boundary conditions was also not clear. Utilizing the compensated compactness, the subsonic-sonic limit to the 2-D or three dimensional axisymmetric steady irrotational flows were proved in \cite{cdsw07,xx07} and later on these results were extended to the multidimensional potential flows and steady Euler flows cases in \cite{chw16,hww11}. Subsonic and subsonic-sonic spiral flows outside a porous body were obtained recently in \cite{wz21}. However, the solutions obtained by the subsonic-sonic limit only satisfy the equations in the sense of distribution and there is no information about the regularity and degeneracy properties near sonic points and their distribution in flow region.

The authors in \cite{WX2013,WX2016,WX2019,WX2020} have established the existence and uniqueness of regular subsonic-sonic flows and smooth transonic flows of Meyer type in De Laval nozzles with a detailed description of sonic curve for irrotational steady two dimensional Euler equations. Courant and Friedrichs \cite[Section 104]{Courant1948} had used the hodograph method to find a class of spiral flows which may change smoothly from subsonic to supersonic or vice verse and these can take place only outside a limiting circular cylinder where the Jacobian of the hodograph transformation is zero. In \cite{WXY21a}, the authors have further examined this class of radially symmetric transonic flows with nonzero angular velocity in an annulus and analyzed their special properties, whose structural stability with respect to the perturbations of suitable boundary conditions was investigated in \cite{WXY21b}, and the existence and uniqueness of smooth transonic flows with nonzero vorticity were established by the multiplier method and the deformation-curl decomposition to the steady Euler equations. There is also an interesting work on the stability analysis for one dimensional smooth transonic accelerating flows to the steady Euler-Poisson system \cite{bdx}.


We now discuss some key ingredients in our analysis for Theorem \ref{2dmain}. We will combine the approach initiated in \cite{Kuzmin2002} with the technique developed in \cite{WXY21b} to construct a class of smooth transonic rotational flow adjacent to the background transonic flows in Proposition \ref{qpo}.

It is worthy to point out the main differences between the current case and the one in \cite{WXY21b}. In \cite{WXY21b}, we have proved some class of smooth transonic steady Euler flows in annulus and concentric cylinders with nonzero angular velocity and nonzero vorticity, where the key element of analysis is based on a linear mixed type second order equation of Tricomi type, which takes the form (after a coordinate transformation)
\be\label{mix-eq1}\begin{cases}
\p_{y_1}^2 \phi + k_{b22}(y_1) \p_{y_2}^2 \phi + k_{b1}(y_1) \p_{y_1}\phi= F(y_1,y_2),\ (y_1,y_2)\in [r_0,r_1]\times \mathbb{T}_{2\pi}\\
r_0\partial_{y_1}\phi(r_0,y_2)+(r_0f'(r_0)-l_0)\partial_{y_2}\phi(r_0,y_2)=g_2(y_2),\ \forall y_2\in \mathbb{T}_{2\pi},\\
\partial_{y_2}\phi(r_1,y_2)=g_3(y_2),\ \ \forall y_2\in\mathbb{T}_{2\pi},\ \ \ \ \phi(r_1,0)=0,
\end{cases}\ee
where $k_{b22}(y_1)=\frac{1-|{\bf M}_b(y_1)|^2}{y_1^2(1-M_{b1}^2(y_1))^2}$ changes sign when crossing the sonic curve and $\mathbb{T}_{2\pi}$ is a 1-d torus with period $2\pi$. While in the present case, the basic linear mixed type second order equation is of Keldysh type, which reads as
\be\label{mix-eq2}\begin{cases}
\bar{k}_{11}(x_1)\p_{x_1}^2 \psi + \p_{x_2}^2 \psi + \bar{k}_1(x_1) \p_{x_1}\psi= F(x_1,x_2),\ \ \forall (x_1,x_2)\in (L_0,L_1)\times (-1,1),\\
\p_{x_2} \psi(L_0,x_2)=h_1(x_2),\ \ x_2\in [-1,1],\\
\partial_{x_2}\psi(x_1,\pm 1)=0,\ \ \ \ \ \psi(L_0,-1)=0,
\end{cases}\ee
where $\bar{k}_{11}(x_1)=1-\bar{M}^2(x_1)$ changes sign when crossing the sonic curve. These two different kinds of degeneracies cause several essential differences in the analysis:
\begin{enumerate}[(i)]
  \item Different boundary conditions are needed at the entrance and exit of the flow region for \eqref{mix-eq1} and \eqref{mix-eq2}. Some restrictions on the flow angles at the entrance and exit must be prescribed for \eqref{mix-eq1} and no boundary conditions at the exit is required in \eqref{mix-eq2}. The basic $H^1$ energy estimates to \eqref{mix-eq1} and \eqref{mix-eq2} follow from the same strategy by finding out an appropriate multiplier based on some special properties of the background transonic flows. The multiplier for \eqref{mix-eq2} is just a linear function (not the exponential function used in \cite{Kuzmin2002}), which provides stronger energy estimates and thus simplifies some arguments in \cite{Kuzmin2002}. It should be noted that the positive acceleration of the background transonic flow is crucial to obtain the basic $H^1(\Omega)$ estimate to \eqref{mix-eq2}. Indeed, for the corresponding mixed type equation obtained by linearizing at the smooth transonic flows with zero acceleration at the sonic point given in Propositions \ref{qzero1} and \ref{qzero2}, one can derive only a weighted $H^1$ energy estimate with a weight degenerating at the sonic point (See Remark \ref{zero-energy}), which is insufficient for the stability of the nonlinear problem.

  \item The construction of the approximate solutions to \eqref{mix-eq1} follows from the finite Fourier series approximation, the energy estimates and the Fredholm alternative theorem for second order elliptic equations. However, due to the degeneracy of $\bar{k}_{11}(x_1)$ at the sonic point $x_1=0$, to show the existence of a strong solution to \eqref{mix-eq2}, we will use the strategy of \cite{Kuzmin2002} by adding a third order dissipation term $\sigma \p_{x_1}^3\psi$ to \eqref{mix-eq2}. Yet, to gain better uniform regularity estimates, different from \cite{Kuzmin2002}, we supplement the approximate equations with two new boundary conditions $\p_{x_1}^2 \psi(L_0,\cdot)= \p_{x_1}^2 \psi(L_1,\cdot)=0$, not the one $\p_{x_1} \psi(L_0,\cdot)= \p_{x_1} \psi(L_1,\cdot)=0$ used in \cite{Kuzmin2002}, which yields weaker boundary layers than that of \cite{Kuzmin2002} and enables us to obtain a uniform $H^2$ energy estimate with respect to $\sigma>0$. This leads to the $H^2$ strong solution to \eqref{mix-eq2} by the weak convergence.

  \item The higher order $H^4$ energy estimates to the linear mixed type equation \eqref{mix-eq2} are much more involved than those for \eqref{mix-eq1}. We extend the problem \eqref{mix-eq2} to an auxiliary problem in a larger domain where the governing equation is elliptic near the exit of nozzle. The solution to the auxiliary problem coincides with that of the original problem in nozzles. A cut-off technique is employed to derive estimates for the higher order derivatives to the auxiliary problem on the transonic region. Finally, we improve the estimate so that the constant in the $H^4(\Omega)$ estimate obtained depends only on the $H^3(\Omega)$ norm of the coefficients in the linear mixed type equations.
\end{enumerate}

To extend the stability analysis of the smooth transonic irrotational flows to the transonic rotational flows, we use the deformation-curl decomposition to the quasi two dimensional model \eqref{q2deuler} to effectively decouple the hyperbolic and elliptic modes. The deformation-curl decomposition to the steady Euler equations is developed by the authors in \cite{WengXin19,weng2019}. The vorticity is resolved by an algebraic
equation for the Bernoulli's function and there is a loss of one derivative in the equation for the vorticity when dealing with transonic flows. Similar to our previous work \cite{WXY21b}, we design an elaborate two-layer iteration scheme by choosing some appropriate function spaces. We utilize the advantage of one order higher regularity of the stream function than the velocity in the whole flow region to represent the Bernoulli's function as a function of the stream function. However, this function involves the inverse function of the restriction of the stream function at the entrance. There is still a loss of $\frac{1}{2}$ derivatives if the stream function belongs to $H^4(\Omega)$ only. We further observe that the regularity of the flows in the subsonic region can be improved be $C^{3,\alpha}$ if the data at the entrance have better $C^{3,\alpha}$ regularity so that the regularity of the stream function near the entrance are improved to be $C^{4,\alpha}$. Thus we finally recover the loss of the derivative.

The rest of this paper will be arranged as follows. In Section \ref{irrotational}, we establish the basic and higher order energy estimates to the linearized mixed potential
equations and construct approximated solutions by a Galerkin method. In Section \ref{rotation} we employ the deformation-curl decomposition for the quasi two dimensional steady Euler
flow model \eqref{q2deuler} and design a two-layer iteration to demonstrate the existence of smooth transonic rotational flows.

\section{The stability analysis within the irrotational flows}\label{irrotational}\noindent

In this section, we first consider the structural stability of the background transonic flows within the irrotational flows. Thus, consider a smooth flow with zero vorticity $\omega= \p_1 u_2-\p_2 u_1\equiv 0$. The quasi two dimensional model \eqref{q2deuler} and the boundary conditions \eqref{qbcs} can be reduced to
\begin{eqnarray}\label{q2dpo}
\begin{cases}
\p_{x_1} (a(x_1) \rho u_1)+ \p_{x_2}(a(x_1)\rho u_2)=0,\\
\p_{x_1} u_2- \p_{x_2} u_1\equiv 0,\\
B_0=\frac{1}{2} |{\bf u}|^2 + h(\rho)
\end{cases}
\end{eqnarray}
with
\be\label{qpbcs}\begin{cases}
u_2(L_0,x_2)= \epsilon h_1(x_2),\\
u_2(x_1, \pm 1)=0,
\end{cases}\ee
where $h_1\in H^3([-1,1])$ satisfying the compatibility conditions
\be\label{cp2}
h_1(\pm 1)=h_1''(\pm 1)=0.
\ee
The main result in this section can be stated as follows.
\begin{theorem}\label{q-irro}
{\it Let $(\bar{\rho},\bar{u})$ be the smooth transonic flow with positive acceleration at the sonic $x_1=0$ given in Proposition \ref{qpo}. Assume that $\gamma>1$ and $h_1\in H^3([-1,1])$ satisfies \eqref{cp2}. Then there exists a small
constant $\epsilon_0$ depending on the background flow and the boundary datum $h_1$, such that for any $0<\epsilon<\epsilon_0$, the problem \eqref{q2dpo} with \eqref{qpbcs} has a unique smooth transonic irrotational solution $(u_1,u_2)\in (H^3(\Omega))^2$ with the estimate
\be\label{qir1}
\|u_1-\bar{u}\|_{H^3(\Omega)}+\|u_2\|_{H^3(\Omega)}\leq C\epsilon,
\ee
for some constant $C$ depending only on the background flow and the boundary datum.

Moreover, all the sonic points form a $C^1$ smooth curve given by $x_1=\xi(x_2)\in C^{1}([-1,1])$. The sonic curve is closed to the background sonic line $x_1=0$ in the sense that
\be\label{qso1}
\|\xi(x_2)\|_{C^1([-1,1])}\leq C\epsilon.
\ee
}\end{theorem}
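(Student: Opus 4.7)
The plan is to reduce \eqref{q2dpo} to a single scalar mixed-type PDE for a potential function and then carry out a perturbation analysis around the one-dimensional background. Since $\Omega$ is simply connected and $\partial_{x_1} u_2 - \partial_{x_2} u_1 = 0$, I would introduce a potential $\psi$ with $u_1=\partial_{x_1}\psi$ and $u_2=\partial_{x_2}\psi$; by Bernoulli's law, $\rho$ becomes a smooth function $\rho=\varrho(|\nabla\psi|^2)$ of the gradient. Substituting into mass conservation yields
\[
(c^2-u_1^2)\partial_{x_1}^2\psi+(c^2-u_2^2)\partial_{x_2}^2\psi-2u_1u_2\,\partial_{x_1}\partial_{x_2}\psi+c^2\,\frac{a'(x_1)}{a(x_1)}\,\partial_{x_1}\psi=0,
\]
a quasilinear equation of Keldysh type. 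The boundary data \eqref{qpbcs} translate into $\partial_{x_2}\psi(L_0,x_2)=\epsilon h_1(x_2)$ on the inlet and $\partial_{x_2}\psi(x_1,\pm 1)=0$ on the walls, with the free additive constant fixed by $\psi(L_0,-1)=0$; no condition is imposed at the supersonic exit $x_1=L_1$.

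Writing $\psi=\bar{\psi}+\epsilon\phi$ with $\bar{\psi}(x_1)=\int_{L_0}^{x_1}\bar{u}(s)\,ds$, the linearization at the background is precisely the mixed equation \eqref{mix-eq2}, with $\bar{k}_{11}(x_1)=1-\bar{M}^2(x_1)$ vanishing to first order at $x_1=0$ and changing sign. The heart of the argument is a solvability theory for this linear Keldysh problem in $H^4(\Omega)$. First, I would derive the basic $H^1$ energy estimate by testing against an affine multiplier of the form $(\alpha x_1+\beta)\partial_{x_1}\psi$, tuning $\alpha,\beta$ so that the quadratic form in $(\partial_{x_1}\psi)^2$ and $(\partial_{x_2}\psi)^2$ produced by integration by parts is uniformly positive across the sonic line; the positive acceleration $\bar{u}'(0)=\mu>0$ from Proposition \ref{qpo} is exactly what makes this coercivity possible. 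Second, since the equation degenerates in the $x_1$-direction, I would regularize by adding $\sigma\,\partial_{x_1}^3\psi$ and supplementing with $\partial_{x_1}^2\psi(L_i,\cdot)=0$ at $i=0,1$, following the strategy sketched for \eqref{mix-eq2} in the introduction, and pass to the weak limit $\sigma\to 0$ once a $\sigma$-uniform $H^2$ bound is in hand. Third, to reach $H^4$, I would extend the problem to an enlarged rectangle on which the coefficient of $\partial_{x_1}^2$ is made positive past $x_1=L_1$ so the extended equation is elliptic there, differentiate freely in the tangential direction $x_2$, and then use the equation itself to convert $x_2$-regularity into $x_1$-regularity on the strictly subsonic and strictly supersonic subregions via a cut-off localization; the final bound should depend only on $\|F\|_{H^2}$ and the $H^3$-norm of the variable coefficients.

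With the solution operator $\mathcal{T}\colon F\mapsto\phi$ in hand, the nonlinear problem is solved by a contraction mapping in the ball $\{\|\phi\|_{H^4(\Omega)}\le K\}$ for a suitable $K$ and small enough $\epsilon_0$: the nonlinear source is quadratic in $(\epsilon\phi,\epsilon\nabla\phi,\epsilon\nabla^2\phi)$, so the Sobolev embedding $H^4(\Omega)\hookrightarrow C^{2,1/2}(\overline{\Omega})$ delivers Lipschitz estimates and the usual shrinking argument closes. Uniqueness on this scale follows by applying the linear basic $H^1$ estimate to the difference of two solutions. The claimed bound \eqref{qir1} for $(u_1,u_2)=\nabla\psi$ is then immediate. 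For the sonic set, define $N(x_1,x_2):=c^2(\varrho(|\nabla\psi|^2))-|\nabla\psi|^2$; at the background, $N(0,x_2)=0$ and $\partial_{x_1}N(0,x_2)=-(\gamma+1)c_*\mu<0$, while $\|N-\bar{N}\|_{C^1(\overline{\Omega})}\le C\epsilon$ by Sobolev embedding, so the implicit function theorem applied $x_2$-pointwise yields a unique $C^1$ curve $x_1=\xi(x_2)$ with $\|\xi\|_{C^1([-1,1])}\le C\epsilon$.

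The chief obstacle is the basic energy estimate for the Keldysh-type linear operator in \eqref{mix-eq2}. The sign change of $\bar{k}_{11}$ at $x_1=0$ rules out any direct positivity argument, and engineering an affine multiplier whose integrated effect produces a coercive quadratic form across the entire strip requires precise algebraic control over the lower-order coefficient $\bar{k}_1$ and over boundary contributions on $\{x_1=L_0,L_1\}$ and $\{x_2=\pm 1\}$. It is here that the positivity of the sonic-point acceleration is indispensable; for the zero-acceleration solutions of Propositions \ref{qzero1}--\ref{qzero2}, only a weighted $H^1$ estimate with a weight degenerating at the sonic line would be available, which is too weak to close the subsequent nonlinear iteration at the $H^4$ level.
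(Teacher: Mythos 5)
Your proposal follows essentially the same approach as the paper: potential function reduction to a Keldysh-type quasilinear PDE, linear $H^1$ estimate via an affine multiplier $d(x_1)\partial_{x_1}\psi$ exploiting positive acceleration at the sonic line, Galerkin construction with $\sigma\partial_{x_1}^3$ regularization and the boundary conditions $\partial_{x_1}^2\psi(L_i,\cdot)=0$, extension to an enlarged domain to make the exit elliptic for the $H^4$ estimate, a Banach contraction for the nonlinear problem, and the implicit function theorem for the sonic curve. You have correctly identified every key lemma and the central role of $\bar{u}'(0)>0$; the sketch is a faithful outline of the paper's proof.
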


\begin{remark}
{\it For irrotational flows, the regularity requiremtn of the boundary data $h_1$ is weaken to be $h_1\in H^{3}([-1,1])$ in Theorem \ref{q-irro}.
}\end{remark}

In this section and section \ref{rotation}, a background flow always refers to the smooth transonic flows with positive acceleration at the sonic $x_1=0$ given in Proposition \ref{qpo} unless specified otherwise.

We start to prove Theorem \ref{q-irro}. It follows from the second equation in \eqref{q2dpo} that there exists a potential function $\varphi=\varphi(x_1,x_2)$ such that $u_i=\p_{x_i}\varphi$ for $i=1,2$. Then the density can be represented as a function of $|\nabla \varphi|^2$:
\be\label{q13}
\rho=\rho(|\nabla\varphi|^2)=\left(\frac{\gamma-1}{\gamma}\right)^{\frac{1}{\gamma-1}}\left(B_0-\frac12 |\nabla \varphi|^2\right)^{\frac{1}{\gamma-1}}.
\ee

Substituting \eqref{q13} into the continuity equation leads to
\be\label{q15}
(c^2(\rho)-(\p_{x_1}\varphi)^2)\p_{x_1}^2\varphi- 2 \p_{x_1} \varphi \p_{x_2} \varphi \p_{x_1x_2}^2 \varphi + (c^2(\rho)-(\p_{x_2}\varphi)^2)\p_{x_2}^2\varphi + b(x_1)c^2(\rho)\p_{x_1} \varphi=0,
\ee
where $c^2(\rho)=(\gamma-1)(B_0-\frac{1}{2}|\nabla \varphi|^2)$. For the 1-D background solution, $\bar{\varphi}=\bar{\varphi}(x_1)=\int_{L_0}^{x_1} \bar{u}(s) ds$ solves
\be\label{q16}
(c^2(\bar{\rho})-(\p_{x_1} \bar{\varphi}))\p_{x_1}^2 \bar{\varphi} + b(x_1)c^2(\bar{\rho})\p_{x_1} \bar{\varphi}=0.
\ee

Denote $\psi_1= \varphi-\bar{\varphi}$. Then $\psi$ satisfies
\be\label{q18}\begin{cases}
\displaystyle\sum_{i,j=1}^2 k_{ij}(\nabla\psi_1) \p_{x_ix_j}^2 \psi_1 + k_1(\nabla\psi)\p_{x_1} \psi_1= G(\nabla\psi_1),\\
\p_{x_2}\psi_1(L_0,x_2)= \epsilon h_1(x_2),\  \ \forall x_2\in (-1,1), \ \psi_1(L_0,-1)=0,\\
\p_{x_2}\psi_1(x_1,-1)=\p_2\psi(x_1,1)=0,\ \ \ \forall x_1\in (L_0,L_1).
\end{cases}\ee
where
\be\label{q171}\begin{cases}
k_{11}(\nabla\psi_1)= \frac{c^2(\rho)-(\bar{u}+ \p_{x_1} \psi_1)^2}{c^2(\rho)-(\p_{x_2}\psi_1)^2},\ \ \ k_{12}(\nabla\psi_1)=k_{21}(\nabla\psi_1)=-\frac{(\bar{u}+ \p_{x_1} \psi_1)\p_{x_2}\psi_1}{c^2(\rho)-(\p_{x_2}\psi_1)^2}, \\
k_{22}(\nabla\psi_1)\equiv 1,\ \ k_{1}(\nabla\psi_1)=\frac{b(x_1) (c^2(\rho)-(\gamma-1)\bar{u}^2)- (\gamma+1) \bar{u}(x_1)\bar{u}'(x_1)}{c^2(\rho)-(\p_{x_2}\psi_1)^2},\\
G(\nabla\psi_1)=\frac{\bar{u}'(x_1)\left((\gamma+1)(\p_{x_1}\psi_1)^2+ (\gamma-1)(\p_{x_2}\psi_1)^2\right)}{2 [c^2(\rho)-(\p_{x_2}\psi_1)^2]}+\frac{(\gamma-1)b(x_1) \bar{u}(x_1)|\nabla \psi_1|^2}{2[c^2(\rho)-(\p_{x_2}\psi_1)^2]},\\
c^2(\rho)=(\gamma-1)[B_0-\frac{1}{2}((\bar{u}+\p_{x_1} \psi_1)^2+(\p_{x_2}\psi_1)^2)].
\end{cases}\ee

Define a monotonic decreasing cut-off function $\eta_0\in C^{\infty}([L_0,L_1])$ such that $0\leq \eta_0(x_1)\leq 1$ for all $x_1\in [L_0,L_1]$ and
\be\label{eta}
\eta_0(x_1)=\begin{cases}
1,\ \ \ & L_0\leq x_1\leq \frac{15L_0}{16},\\
0,\ \ \ & \frac{7L_0}{8}\leq x_1\leq L_1.
\end{cases}\ee
Set $\psi(x_1,x_2)=\psi_1(x_1,x_2)- \epsilon \psi_0$, where $\psi_0=\eta_0(x_1)\int_{-1}^{x_2} h_1(s) ds$. Then
\begin{eqnarray}\label{qlinearized1}\begin{cases}
\mathcal{L}\tilde{\psi}:=\displaystyle\sum_{i,j=1}^2k_{ij}(\nabla \psi+ \epsilon \nabla \psi_0) \p_{ij}^2 \psi +  k_1(\nabla \psi+ \epsilon \nabla \psi_0)\p_{x_1} \psi= G_0(\nabla \psi),\ \ & \text{in } \Omega,\\
\psi(L_0,x_2)= 0,\ \ \ & x_2\in (-1,1),\\
\p_{x_2}\psi(x_1,-1)=\p_{x_2}\psi(x_1,1)=0,\ \ \ \ &\forall x_1\in (L_0,L_1),
\end{cases}\end{eqnarray}
where
\be\label{g0}
G_0(\nabla \psi)=G(\nabla \psi+\epsilon \nabla \psi_0)-\epsilon\bigg(\sum_{i,j=1}^2 k_{ij}(\nabla \psi+ \epsilon \nabla \psi_0)\p_{x_ix_j}^2 \psi_0 + k_1(\nabla \psi+ \epsilon \nabla \psi_0)\p_{x_1}\psi_0\bigg).
\ee

Define
\begin{eqnarray}\nonumber
\Sigma_{\delta_0}=\left\{\psi\in H^4(\Omega): \|\psi\|_{H^4(\Omega)}\leq \delta_0, \p_{x_2}\psi(x_1,\pm 1)=\p_{x_2}^3\psi(x_1,\pm 1)=0\right\},
\end{eqnarray}
where $\delta_0>0$ will be specified later. Note that for $\psi\in\Sigma_{\delta_0}$, $\p_{x_2}^3\psi\in H^1(\Omega)$, and $\p_{x_2}^3\psi(x_1,\pm 1)=0$ hold in the sense of trace. For any given $\hat{\psi}\in \Sigma_{\delta_0}$, we define a mapping $\mathcal{T}$ from $\Sigma_{\delta_0}$ to itself by solving the following boundary value problem for a linearized mixed type second order equations
\begin{eqnarray}\label{qlinearized2}\begin{cases}
\displaystyle\sum_{i,j=1}^2 k_{ij}(\nabla\hat{\psi}+\epsilon \nabla \psi_0) \p_{x_ix_j}^2 \psi + k_1(\nabla\hat{\psi}+\epsilon \nabla \psi_0)\p_{x_1} \psi= G_0(\nabla\hat{\psi}),\\
\p_{x_2}\psi(L_0,x_2)= \epsilon h_1(x_2),\ \ \forall x_2\in (-1,1), \ \psi(L_0,-1)=0,\\
\p_{x_2}\psi(x_1,-1)=\p_2\psi(x_1,1)=0,\ \ \ \forall x_1\in (L_0,L_1).
\end{cases}\end{eqnarray}

Since $\hat{\psi}\in \Sigma_{\delta_0}$, the coefficients $k_{1i}(\nabla \hat{\psi}+\epsilon \nabla \psi_0), i=1,2$ and $k_1(\nabla \hat{\psi}+\epsilon \nabla \psi_0)$ satisfy
\begin{eqnarray}\label{qcoe}
\begin{cases}
\|k_{11}(\nabla \hat{\psi}+\epsilon \nabla \psi_0)-\bar{k}_{11}\|_{H^3(\Omega)}+\|k_{12}(\nabla \hat{\psi})\|_{H^3(\Omega)}\leq C_0(\epsilon+\delta_0),\ \\
\|k_{1}(\nabla \hat{\psi}+\epsilon \nabla \psi_0)-\bar{k}_1\|_{H^3(\Omega)} \leq C_0(\epsilon+\delta_0),\ \ \|G(\nabla \hat{\psi})\|_{H^3(\Omega)}\leq C_0(\epsilon+\delta_0^2),\ \ \\
\{k_{12}(\nabla \hat{\psi}+\epsilon \nabla \psi_0)\}(x_1,\pm 1)=\p_{x_2}^2\{k_{12}(\nabla \hat{\psi}+\epsilon \nabla \psi_0)\}(x_1,\pm 1)=0,\ \ \ \forall x_1\in [L_0,L_1],\\
\p_{x_2}\{k_{11}(\nabla \hat{\psi}+\epsilon \nabla \psi_0)\}(x_1,\pm 1)=\p_{x_2}\{ k_1(\nabla\hat{\psi}+\epsilon \nabla \psi_0)\}(x_1,\pm 1))=0,\\
\p_{x_2} G(\nabla \hat{\psi})(x_1,\pm 1)=0.
\end{cases}
\end{eqnarray}
where
\be\label{q19}\begin{cases}
\bar{k}_{11}(x_1)=1-\bar{M}_1^2(x_1),\\
\bar{k}_{1}(x_1)=\frac{b(x_1)(c^2(\bar{\rho})-(\gamma-1)\bar{u}^2)- (\gamma+1) \bar{u}(x_1) \bar{u}'(x_1)}{c^2(\bar{\rho}(x_1))}=\frac{1+ \bar{M}^2+(\gamma-1)\bar{M}^4}{1-\bar{M}^2}b(x_1).
\end{cases}\ee

Since $\bar{k}_{11}(x_1)\geq 2\kappa_*>0$ for any $x_1\in [L_0,\frac{L_0}{8}]$ with some positive constant $\kappa_*$, there exists a constant $\delta_*$ such that if $0<\delta_0\leq \delta_*$ in \eqref{qcoe}, then
\be\label{elliptic}
k_{11}(x_1,x_2)\geq \kappa_*>0, \ \ \text{for any }\ \ (x_1,x_2)\in [L_0,\frac{L_0}{8}]\times [-1,1].
\ee

The following properties for the background transonic flows are of great importance in our following stability analysis.
\begin{lemma}
{\it For any given smooth transonic flows constructed in Proposition \ref{qpo} and $\bar{k}_1, \bar{k}_{11}$ defined in \eqref{q19}, there exists a positive number $k_*>0$, such that for any $x_1\in [L_0,L_1]$,
\be\label{apos1}
2\bar{k}_1(x_1)+(2j-1)\bar{k}_{11}'(x_1)\leq -\kappa_*, j=0,1,2,3,
\ee
Thus there exists another positive number $d_0>0$, such that $d(x_1)= 6(x_1-d_0)<0$ and
\be\label{apos2}
(\bar{k}_1+j \bar{k}_{11}') d -\frac{1}{2}(\bar{k}_{11} d)'\geq 3,\ \ \ j=0,1,2,3,
\ee
for any $x_1\in [L_0,L_1]$.
}
\end{lemma}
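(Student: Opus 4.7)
The plan is to reduce both inequalities to explicit algebraic expressions in the background Mach number $\bar{M}^2$ and the quotient $\bar{u}'/\bar{u}$, using the structural ODEs \eqref{qu1d101}--\eqref{qu1d103} characterizing the accelerating transonic flow. The decisive point is that the strict positivity of $\bar{u}'$ across the whole interval $[L_0,L_1]$, including the sonic point $x_1=0$, is exactly what Proposition \ref{qpo} provides, and it is this property that powers the multiplier argument.

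First I would differentiate $\bar{k}_{11}=1-\bar{M}^2$ and insert \eqref{qu1d103} to obtain
\[
\bar{k}_{11}'(x_1)=\frac{\bar{M}^2(2+(\gamma-1)\bar{M}^2)}{1-\bar{M}^2}\,b(x_1).
\]
Combined with the formula for $\bar{k}_1$ in \eqref{q19}, a short algebraic simplification collapses the linear combination into
\[
2\bar{k}_1(x_1)+(2j-1)\bar{k}_{11}'(x_1)=\frac{2+4j\bar{M}^2+(2j+1)(\gamma-1)\bar{M}^4}{1-\bar{M}^2}\,b(x_1),\qquad j=0,1,2,3.
\]
The apparent $0/0$ at the sonic point is resolved by \eqref{qu1d101}, which yields $b(x_1)/(1-\bar{M}^2)=-\bar{u}'(x_1)/\bar{u}(x_1)$, and therefore
\[
2\bar{k}_1(x_1)+(2j-1)\bar{k}_{11}'(x_1)=-\bigl[2+4j\bar{M}^2+(2j+1)(\gamma-1)\bar{M}^4\bigr]\,\frac{\bar{u}'(x_1)}{\bar{u}(x_1)}.
\]
The bracket is bounded below by $2$ for all $j\geq 0$; Proposition \ref{qpo} supplies $\bar{u}\in C^\infty([L_0,L_1])$ with $\bar{u}(x_1)>0$ and $\bar{u}'(x_1)>0$ everywhere, including $\bar{u}'(0)=c_*\sqrt{a''(0)/((\gamma+1)a(0))}>0$, so $\bar{u}'/\bar{u}$ attains a strictly positive infimum on the compact interval. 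Setting $\kappa_*:=2\inf_{[L_0,L_1]}\bar{u}'/\bar{u}$ then delivers \eqref{apos1} uniformly in $j\in\{0,1,2,3\}$.

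For \eqref{apos2}, expanding with $d(x_1)=6(x_1-d_0)$ and $d'(x_1)=6$ yields the identity
\[
(\bar{k}_1+j\bar{k}_{11}')\,d-\tfrac12(\bar{k}_{11}d)'=\tfrac12\bigl(2\bar{k}_1+(2j-1)\bar{k}_{11}'\bigr)\,d-3\bar{k}_{11}.
\]
Since $d<0$ on $[L_0,L_1]$ and the first inequality is already in hand, the first term on the right is bounded below by $\tfrac{\kappa_*}{2}|d(x_1)|$. Writing $K:=\max_{[L_0,L_1]}|\bar{k}_{11}|$ and choosing any $d_0>L_1$ large enough that $6(d_0-L_1)\geq 6(1+K)/\kappa_*$ makes $|d(x_1)|\geq 6(1+K)/\kappa_*$ throughout, which gives $\tfrac{\kappa_*}{2}|d|-3\bar{k}_{11}\geq 3(1+K)-3\bar{k}_{11}\geq 3$ for every $j\in\{0,1,2,3\}$ and every $x_1\in[L_0,L_1]$.

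The only genuine subtlety is the non-degeneracy of $\bar{u}'$ at the sonic point: direct inspection of $\bar{u}'=-\bar{u}b/(1-\bar{M}^2)$ gives a $0/0$ form there, and it is exactly the positive-acceleration hypothesis $a''(0)>0$ encoded in Proposition \ref{qpo} that rescues strict positivity of $\bar{u}'(0)$. Without it, the expression $2\bar{k}_1+(2j-1)\bar{k}_{11}'$ would vanish at $x_1=0$ and only a weighted (sonic-degenerate) variant of the estimate would survive, which is precisely why the analogous multiplier argument fails for the zero-acceleration profiles of Propositions \ref{qzero1}--\ref{qzero2}.
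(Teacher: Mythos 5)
Your proof is correct and follows essentially the same route as the paper: both reduce the linear combination $2\bar{k}_1+(2j-1)\bar{k}_{11}'$ to an explicit factor times the common sonic-degenerate quotient, then appeal to the positive acceleration at the sonic point to conclude strict negativity. The only cosmetic difference is that you resolve the $0/0$ form at $x_1=0$ by rewriting $b/(1-\bar{M}^2)=-\bar{u}'/\bar{u}$ and citing smoothness and positivity of $\bar{u}'$ from Proposition \ref{qpo}, whereas the paper computes the limit $\lim_{x_1\to 0}a'(x_1)/(1-\bar{M}^2)=-\sqrt{a(0)a''(0)/(\gamma+1)}$ directly; these are equivalent.
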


\begin{proof}

Note that for any $x_1\in [L_0,0)\cup (0,L_1]$
\be\label{q23}
2\bar{k}_1(x_1)+(2j-1)\bar{k}_{11}'(x_1)=\frac{2 +(\gamma-1)\bar{M}^4+ 2j \bar{M}^2(2+(\gamma-1)\bar{M}^2)}{a(x_1)}\frac{a'(x_1)}{1-\bar{M}^2}<0,
\ee
and
\be\no
\displaystyle\lim_{x_1\to 0} \frac{a'(x_1)}{1-\bar{M}^2}&=&-\sqrt{\frac{a(0)a''(0)}{\gamma+1}}<0.
\ee
Thus there exists a positive constant $\kappa_*>0$ depending only on the background flow such that for $x_1\in [L_0,L_1]$
\be\no
2\bar{k}_1(x_1)+(2j-1)\bar{k}_{11}'(x_1)=\frac{2 +(\gamma-1)\bar{M}^4+ 2j \bar{M}^2(2+(\gamma-1)\bar{M}^2)}{a(x_1)}\frac{a'(x_1)}{1-\bar{M}^2}\leq -\kappa_*,\ \ \ j=0,1,2,3.
\ee

Let $d(x_1)= -6 d_0 + 6 x_1$, where $d_0>L_1$ is a constant large enough such that $d(x_1)<0$ for every $x_1\in [L_0,L_1]$ and
\be\label{q221}
&&(\bar{k}_1+j\bar{k}_{11}') d -\frac{1}{2}(\bar{k}_{11} d)'= \frac{1}{2} (2\bar{k}_1+(2j-1)\bar{k}_{11}') d -\frac{1}{2}\bar{k}_{11} d'\\\nonumber
&&= 3(2\bar{k}_1+(2j-1)\bar{k}_{11}') (x_1-d_0)- 3\bar{k}_{11}(x_1)\\\nonumber
&&\geq 3(d_0-x_1) \kappa_* - 3\bar{k}_{11}(x_1)\geq 3, \forall j=0,1,2,3,
\ee
if $d_0>0$ is suitable large.

\end{proof}

The functions $k_{11}, k_{12}, k_1$ and $G_0$ in \eqref{qlinearized2} belonging to $H^3(\Omega)$ can be approximated by a sequence of $C^3(\overline{\Omega})$ smooth functions in $H^3(\Omega)$ which also satisfy the compatibility conditions listed in \eqref{qcoe}. In the following subsections $\S$\ref{se31},$\S$\ref{se32}, and $\S$\ref{se33}, we assume that the coefficients $k_{11}, k_{12}, k_1$ belong to $C^{3}(\overline{\Omega})$ satisfying the properties \eqref{qcoe}.

\subsection{The $H^1$ energy estimates of the linear mixed type second order equation \eqref{qlinearized2}}\label{se31}\noindent

We first derive the following $H^1$ energy estimate for \eqref{qlinearized2}.

\begin{lemma}\label{qH1estimate}
{\it There exists a constant $\delta_*>0$ depending only on the background flow, such that if $0<\delta_0\leq \delta_*$ in \eqref{qcoe}, the classical solution to
\eqref{qlinearized2} satisfies the following energy estimate
\be\label{qH1}
\iint_{\Omega}|\psi(x)|^2+|\nabla\psi(x)|^2 dx  + \int_{-1}^1 (\p_{x_1}\psi(L_0,x_2))^2+\sum_{j=1}^2(\p_{x_j}\psi(L_1,x_2))^2 dx_2\leq C_* \iint_{\Omega} G_0^2 dx,
\ee
where the constant $C_*$ depends only on the $H^3(\Omega)$ norms of the coefficients $k_{11}, k_{12}$ and $k_1$.
}\end{lemma}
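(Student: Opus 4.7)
My plan is to apply the multiplier method, tailored to the Keldysh-type degeneracy of the linear equation in \eqref{qlinearized2}. Because $k_{11}$ changes sign across the sonic curve inside $\Omega$, no zero-order multiplier $\alpha(x_1,x_2)\psi$ will give uniform coercivity; instead, I would use the first-order multiplier $d(x_1)\,\partial_{x_1}\psi$, where $d(x_1)=6(x_1-d_0)<0$ is the linear function constructed just before the lemma. The design is forced by \eqref{apos2}: multiplying by $d\psi_{x_1}$ converts each second-order product into a perfect $x_i$-derivative of $\psi_{x_1}^2$ or $\psi_{x_2}^2$ (after integrating $\psi_{x_2x_2}\cdot d\psi_{x_1}$ by parts once in $x_2$, where the $x_2$-boundary vanishes thanks to $\psi_{x_2}(x_1,\pm 1)=0$), and the $x_2$-boundary from the mixed term $2k_{12}\psi_{x_1x_2}\cdot d\psi_{x_1}=dk_{12}\,\partial_{x_2}(\psi_{x_1}^2)$ vanishes because $k_{12}(x_1,\pm 1)=0$ by the compatibility in \eqref{qcoe}.

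After these manipulations and one more IBP in $x_1$, the bulk integrand takes the form
\[
\Bigl(dk_1-\tfrac12(dk_{11})_{x_1}-d(k_{12})_{x_2}\Bigr)\psi_{x_1}^2+\tfrac12 d_{x_1}\psi_{x_2}^2.
\]
By \eqref{qcoe} combined with the 2D embedding $H^3\hookrightarrow L^\infty$, the first coefficient differs from its background value $\bar k_1 d-\tfrac12(\bar k_{11}d)'$ by at most $O(\epsilon+\delta_0)$, hence is at least $3-C(\epsilon+\delta_0)\geq 2$ by \eqref{apos2} with $j=0$, while the second equals $\tfrac12 d_{x_1}=3$; this gives bulk coercivity $\gtrsim\iint|\nabla\psi|^2\,dx$. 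For the boundary traces, at $x_1=L_1$ the contribution is $\tfrac12 d(L_1)k_{11}(L_1,\cdot)\psi_{x_1}^2-\tfrac12 d(L_1)\psi_{x_2}^2$, both pieces positive since $d<0$ and $k_{11}(L_1,\cdot)<0$ (supersonic); at $x_1=L_0$ it is $-\tfrac12 d(L_0)k_{11}(L_0,\cdot)\psi_{x_1}^2+\tfrac12 d(L_0)\psi_{x_2}^2$, whose first part is positive since $k_{11}(L_0,\cdot)>0$ (subsonic), while the $\psi_{x_2}(L_0,\cdot)$ piece either vanishes in the homogenized formulation $\psi(L_0,\cdot)\equiv 0$ or is absorbed into $G_0$ via the lifting $\psi_0$. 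Cauchy--Schwarz on $\iint d\psi_{x_1}G_0\,dx$ with absorption of a small $\psi_{x_1}^2$ term then gives the gradient and trace bounds, and the $L^2$ bound on $\psi$ itself follows from $\psi(L_0,-1)=0$ via the fundamental theorem of calculus in both variables.

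\textbf{Main obstacle.} The decisive difficulty is that \eqref{qlinearized2} is genuinely of mixed type, with ellipticity failing on the sonic curve interior to $\Omega$, so no purely elliptic or purely hyperbolic machinery applies. The whole estimate rests on \eqref{apos2}, which is available only thanks to the positive-acceleration hypothesis $a''(0)>0$ of Proposition \ref{qpo}; for the zero-acceleration flows of Propositions \ref{qzero1} and \ref{qzero2}, one would obtain only a weighted $H^1$ estimate degenerating at the sonic point, which would be insufficient to close the nonlinear iteration. A secondary subtlety is that $dk_{11}$ must have the correct sign at \emph{both} ends $x_1=L_0$ and $x_1=L_1$ so that the boundary traces reinforce rather than oppose the bulk estimate; this is automatic because $k_{11}$ flips sign exactly across the interior sonic curve, and it is also what explains why no boundary condition needs to be prescribed at the supersonic exit $x_1=L_1$.
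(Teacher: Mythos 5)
Your proposal follows the paper's proof essentially verbatim: the same multiplier $d(x_1)\partial_{x_1}\psi$, the same integration by parts, the same identification of the bulk coercivity via \eqref{apos2}, the same vanishing of the $x_2$-boundary integrals from $k_{12}(x_1,\pm 1)=0$ and $\p_{x_2}\psi(x_1,\pm 1)=0$, and the same sign analysis of the $x_1$-boundary traces at $L_0$ (subsonic) and $L_1$ (supersonic). You also correctly flag the one subtle point at $x_1=L_0$: after the lifting by $\epsilon\psi_0$ the boundary condition there is homogeneous, $\psi(L_0,\cdot)\equiv 0$, so the $\p_{x_2}\psi(L_0,\cdot)$ trace drops out, which is exactly how the paper's argument closes.
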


\begin{proof}
We use an old but powerful idea, which could be traced back to the positive operator theory developed by Friedrichs \cite{Friedrichs1958}, to find a multiplier
and identify a class of admissible boundary conditions at the entrance and exit, where some key properties of the background flow play a crucial role. Let $d(x_1)=6(x_1-d_0)<0$ for $x_1\in[L_0,L_1]$. Integration by parts leads to
\be\no
&&\iint_{\Omega} d(x_1)\p_{x_1} \psi G_0 dx_1 dx_2=  \iint_{\Omega} d(x_1)\p_{x_1} \psi \mathcal{L}\psi dx_1 dx_2\\\label{13}
&&=\iint_{\Omega}\left(k_1 d-\frac{1}{2}\p_{x_1}(k_{11}d)-\p_{x_2} k_{12} d\right)(\p_{x_1} \psi)^2  + \frac{1}{2} d'(x_1) (\p_{x_2}\psi)^2  dx_1 dx_2\\\no
&&\quad+\frac{1}{2}\int_{-1}^1 \left(k_{11} d(\p_{x_1} \psi)^2 -  d (\p_{x_2} \psi)^2\right)\b|_{x_1=L_0}^{L_1} dx_2 + \int_{L_0}^{L_1} \left(k_{12} d (\p_{x_1} \psi)^2 + d \p_{x_1} \psi \p_{x_2} \psi\right)\b|_{x_2=-1}^1 dx_1.
\ee
Since $k_{12}(x_1,\pm 1)\equiv 0$ and $\p_{x_2}\psi(x_1,\pm 1)=0$ for every $x_2\in [-1,1]$, the last boundary integral vanishes.

Using \eqref{apos1}-\eqref{apos2}, there exists a constant $\delta_*>0$ such that if $0<\delta_0\leq \delta_*$ in \eqref{qcoe}, there holds
\begin{eqnarray}\nonumber
&&k_1 d-\frac12 \p_{x_1}(k_{11}d)- d\partial_{x_2} k_{12}=\bar{k}_1 d -\frac{1}{2}(\bar{k}_{11} d)' + (k_1-\bar{k}_{1})d-\frac{1}{2}\p_{x_1}((k_{11}-\bar{k}_{11})d)- d\partial_{x_2} k_{12}\\\no
&&\geq 3- \|d(k_1-\bar{k}_{1})\|_{L^{\infty}}-\frac{1}{2}\|\p_{x_1}((k_{11}-\bar{k}_{11})d)\|_{L^{\infty}}- \|d\partial_{x_2} k_{12}\|_{L^{\infty}}\geq 2>0, \ \ \forall
(x_1,x_2)\in \Omega,\\\no
&&\frac{1}{2}d'(x_1)\equiv 3,\forall (x_1,x_2)\in \Omega.
\end{eqnarray}
due to the Sobolev embedding $H^3(\Omega)\subset C^{1,\alpha}(\overline{\Omega})$ with $\alpha\in(0,1)$. Note also $d(L_0)<0, d(L_1)<0$, thus it follows from \eqref{13} that
\be\no
\iint_{\Omega} d(x_1)\p_{x_1}\psi G_0 dx_1 dx_2\geq 2\iint_{\Omega} |\nabla \psi|^2 dx_1 dx_2 + \int_{-1}^1 (\p_{x_1}\psi(L_0,x_2))^2+\sum_{j=1}^2(\p_{x_j}\psi(L_1,x_2))^2 dx_2.
\ee
Thus the estimate \eqref{qH1} is obtained.

\end{proof}

\begin{remark}\label{zero-energy}
{\it It should be noted that the positive acceleration of the background flow is crucial to establish the stability estimate \eqref{qH1}. Indeed, if the background flow has zero acceleration as in Propositions \ref{qzero1} and \ref{qzero2}, then the corresponding linearized problem takes the following form
\be\label{deg1}\begin{cases}
\mathcal{L}\psi= \bar{k}_{11}(x_1) \p_{x_1}^2 \psi + \p_{x_2}^2 \psi + \bar{k}_1\p_{x_1} \psi=x_1^{2m} G_0(x_1,x_2), \ \text{in }\  \Omega,\\
\psi(L_0,x_2) = 0,\ \ \forall x_2\in [-1,1],\\
\p_{x_2}\psi(x_1,\pm 1)=0,\ \ \ \forall x_1\in [L_0,L_1],
\end{cases}\ee
where we rewrite the source term as $x_1^{2m} G_0(x_1,x_2)$ which comes from the definition of $G(\nabla\psi)$ in \eqref{q171}.

Same as in Lemma \ref{qH1estimate}, one can get from \eqref{deg1} that
\be\label{deg2}
&&\iint_{\Omega} x_1^m G_0 d(x_1)\p_{x_1}\psi dx_1 dx_2 = \iint_{\Omega} \mathcal{L} \psi \cdot d(x_1)\p_{x_1}\psi dx_1 dx_2\\\no
&&= \iint_{\Omega} \left(\bar{k}_1 d-\frac{1}{2} (\bar{k}_{11} d)'\right)(\p_{x_1}\psi)^2+ \frac12 d' (\p_{x_2} \psi)^2 dx+  \frac 12\int_{-1}^1 \bar{k}_{11} d(\p_{x_1}\psi)^2-d(\p_{x_2} \psi)^2\bigg|_{x_1=L_0}^{L_1} dx_2.
\ee
Note that
\be\no
2\bar{k}_1(x_1)- \bar{k}_{11}'(x_1)&=&\frac{(2+(\gamma-1)\bar{M}^4(x_1))}{1-\bar{M}^2}b(x_1)=\frac{(2+(\gamma-1)\bar{M}^4(x_1))c^2(\bar{\rho})}{c^2(\bar{\rho})-\bar{u}^2(x_1)}\frac{a'(x_1)}{a(x_1)}\\\no
&=&\frac{\gamma J^{\gamma-1}(a(x_1))^{1-\gamma}(2+(\gamma-1)\bar{M}^4(x_1))}{a(x_1)}\frac{a'(x_1)}{(a(x_1))^{1-\gamma}(a(0))^{\gamma-1}c_*^{\gamma+1}-(\bar{u}(x_1))^{\gamma+1}}.
\ee
Then it holds that away from $x_1=0$, $2\bar{k}_1(x_1)- \bar{k}_{11}'(x_1)<0$.

Let us take the one dimensional smooth transonic solution given in Proposition \ref{qzero1} for example. Since $a(x_1)$ satisfies the conditions \eqref{qu1} and \eqref{qu1d200} and $\bar{u}(x_1)=c_*+x_1^{2m+1} y(x_1)$, then
\be\no
&&(a(x_1))^{1-\gamma}(a(0))^{\gamma-1}c_*^{\gamma+1}-(\bar{u}(x_1))^{\gamma+1}\\\no
&&=-c_*^{\gamma+1}\frac{(a(x_1))^{\gamma-1}-(a(0))^{\gamma-1}}{(a(x_1))^{\gamma-1}}-(\gamma+1)\int_0^{x_1^{2m+1} y(x_1)} (c_*+s)^{\gamma} ds\\\no
&&=-x_1^{2m+1} Y(x_1),
\ee
where $Y(x_1)>0$ for every $x_1\in [L_0,L_1]$. By \eqref{qu1d200}, $a'(x_1)$ has a form $a'(x_1)=x_1^{4m+1} b_1(x_1)$ with $b_1(x_1)>0$ for every $x_1\in [L_0,L_1]$. Thus one derives that
\be\no
2\bar{k}_1(x_1)- \bar{k}_{11}'(x_1)=- x_1^{2m} Q_1(x_1),\ \ 1-\bar{M}^2(x_1)=- x_1^{2m+1} Q_2(x_1),
\ee
for some smooth positive functions $Q_1(x_1)$ and $Q_2(x_1)$ for $\forall x_1\in [L_0,L_1]$. Hence for $d(x_1)=6(x_1-d_0)$, one may select a
large enough constant $d_0>0$ such that
\be\no
&&\bar{k}_1 d-\frac{1}{2}(\bar{k}_{11}d)'=3(x_1-d_0)(2\bar{k}_1(x_1)- \bar{k}_{11}'(x_1))-3(1-\bar{M}^2(x_1))
\\\no
&&=3(d_0-x_1)x_1^{2m} Q_1(x_1)+3x_1^{2m+1} Q_2(x_1)\geq 3 x_1^{2m}.
\ee
It follows from \eqref{deg2} that
\be\label{deg3}
&&\iint_{\Omega} x_1^{2m} (\p_{x_1}\psi)^2 + (\p_{x_2}\psi)^2 dx_1 dx_2 \leq C_*\iint_{\Omega} x_1^{2m} G_0^2(x_1,x_2) dx_1 dx_2.
\ee
In contrast to the uniform $H^1$ energy estimate \eqref{qH1}, \eqref{deg3} yields only a weighted energy estimate with a weight degenerating at the sonic point $x_1=0$. Furthermore, the solvability of the corresponding nonlinear problem cannot be on the estimate \eqref{deg3} for \eqref{deg1} since it seems to be difficult to derive a priori estimate similar to \eqref{deg3} for solutions to the mixed type equation obtained by linearizing any flow near the background flow in Proposition \ref{qzero1}, due to the unknown location of the sonic curve and the rate at which the linearized equation degenerates in general. While for the smooth one dimensional transonic flow with positive acceleration, the inequality \eqref{apos2} is stable under small perturbations of $k_{11}, k_{12}$ and $k_1$ satisfying \eqref{qcoe}, the above two issues do not cause any essential difficulties in this case.
}
\end{remark}

\subsection{The construction of the approximated solutions}\label{se32}\noindent

To prove the existence of strong solutions to the problem \eqref{qlinearized2}, we use the Galerkin method with Fourier series approximation. Thanks to the degeneracy of the coefficient $k_{11}$ near the sonic curve, following the idea introduced in \cite{Kuzmin2002}, we first consider the following singular perturbation problem to \eqref{qlinearized2} with an additional third order dissipation term and two additional boundary conditions:
\be\label{qap}\begin{cases}
\mathcal{L}^{\sigma} \psi^{\sigma}=\sigma \p_{x_1}^3 \psi^{\sigma} + \displaystyle\sum_{i,j=1}^2 k_{ij} \p_{x_ix_j}^2\psi^{\sigma} + k_1 \p_{x_1} \psi^{\sigma} = G_0(x_1,x_2),\\
\psi^{\sigma}(L_0,x_2)=0,\ \ \p_{x_1}^2\psi^{\sigma}(L_0,x_2)=0,\\
\p_{x_2}\psi^{\sigma}(x_1,\pm 1)=0,\\
\p_{x_1}^2\psi^{\sigma}(L_1,x_2)=0.
\end{cases}\ee
Note that though the third order perturbations of the equation in \eqref{qap} is same as in \cite{Kuzmin2002}, yet the additional boundary conditions $\p_{x_1}^2\psi^{\sigma}(L_0,x_2)=\p_{x_1}^2\psi^{\sigma}(L_1,x_2)=0$ are different from the ones in \cite{Kuzmin2002}. This choice of the boundary conditions leads to weaker boundary layers than those in \cite{Kuzmin2002}, and thus enables us to establish a $H^2$ energy estimate to \eqref{qap} which is uniform with respect to $\sigma$. Therefore a global $H^2(\Omega)$ estimate for the weak solution $\psi$ to \eqref{qlinearized2} can be derived directly.

\begin{lemma}\label{qH1-appro}
{\it There exists a constant $\delta_*>0$ depending only on the background flow, such that if $0<\delta_0\leq \delta_*$ in \eqref{qcoe}, the classical solution to
\eqref{qap} satisfies the following energy estimate
\be\label{qH1-appro0}
&&\sigma \iint_{\Omega} |\p_{x_1}^2\psi^{\sigma}|^2 dx_1 dx_2+\iint_{\Omega}|\psi^{\sigma}|^2+|\nabla\psi^{\sigma}|^2 dx  \\\no
&&\quad\quad+ \int_{-1}^1 (\p_{x_1}\psi^{\sigma}(L_0,x_2))^2+\sum_{j=1}^2(\p_{x_j}\psi^{\sigma}(L_1,x_2))^2 dx_2
\leq C_* \iint_{\Omega} G_0^2 dx,\\\label{qH2-appro}
&&\iint_{\Omega}|\nabla^2\psi^{\sigma}|^2 dx \leq C_* \iint_{\Omega} G_0^2+|\nabla G_0|^2 dx,
\ee
where the constant $C_*$ depends only on the $H^3(\Omega)$ norms of the coefficients $k_{11}, k_{12}$ and $k_1$.
}\end{lemma}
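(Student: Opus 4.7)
The plan is to extend the multiplier technique of Lemma \ref{qH1estimate} to accommodate the third-order regularization in $\sigma$, and then to iterate the procedure once in $x_2$.

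For \eqref{qH1-appro0}, I would test \eqref{qap} against the same multiplier $d(x_1)\partial_{x_1}\psi^\sigma$ with $d(x_1)=6(x_1-d_0)<0$ as in Lemma \ref{qH1estimate}. The second-order part reproduces the identity \eqref{13} and the coercivity bound \eqref{qH1} verbatim; the new contribution is
\[
\sigma\iint_\Omega d(x_1)(\partial_{x_1}^3\psi^\sigma)(\partial_{x_1}\psi^\sigma)\,dx.
\]
Two integrations by parts in $x_1$, using $\partial_{x_1}^2\psi^\sigma(L_0,\cdot)=\partial_{x_1}^2\psi^\sigma(L_1,\cdot)=0$ together with $d'\equiv 6$ and $d''\equiv 0$, convert this into
\[
\sigma\iint_\Omega |d|(\partial_{x_1}^2\psi^\sigma)^2\,dx + 3\sigma\int_{-1}^{1}\bigl[(\partial_{x_1}\psi^\sigma(L_0,x_2))^2-(\partial_{x_1}\psi^\sigma(L_1,x_2))^2\bigr]dx_2.
\]
The first integral supplies the extra $\sigma$-weighted control of $\partial_{x_1}^2\psi^\sigma$ asserted in \eqref{qH1-appro0}; the boundary contribution is dominated, for $\sigma$ small, by the favorably signed piece $\tfrac12\int k_{11}d(\partial_{x_1}\psi^\sigma)^2\big|_{L_0}^{L_1}dx_2$ produced by the second-order part, because $k_{11}(L_j)d(L_j)>0$ at both endpoints (subsonic at $L_0$ combined with $d(L_0)<0$, supersonic at $L_1$ combined with $d(L_1)<0$).

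For \eqref{qH2-appro}, differentiating \eqref{qap} in $x_2$ and setting $w=\partial_{x_2}\psi^\sigma$ gives $\mathcal{L}^\sigma w=\tilde G$ with
\[
\tilde G=\partial_{x_2}G_0-(\partial_{x_2}k_{11})\partial_{x_1}^2\psi^\sigma-2(\partial_{x_2}k_{12})\partial_{x_1x_2}^2\psi^\sigma-(\partial_{x_2}k_1)\partial_{x_1}\psi^\sigma.
\]
The boundary data for $w$ are $w(L_0,\cdot)=0$, $\partial_{x_1}^2 w(L_0,\cdot)=\partial_{x_1}^2 w(L_1,\cdot)=0$, together with $\partial_{x_1}w(x_1,\pm 1)=0$ obtained by tangentially differentiating $\partial_{x_2}\psi^\sigma(x_1,\pm 1)\equiv 0$. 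Combined with the compatibility conditions in \eqref{qcoe}, namely $k_{12}=\partial_{x_2}^2 k_{12}=0$ and $\partial_{x_2}k_{11}=\partial_{x_2}k_1=0$ on $x_2=\pm 1$, every boundary contribution arising in the $d(x_1)\partial_{x_1}w$ test of $\mathcal{L}^\sigma w=\tilde G$ along $x_2=\pm 1$ vanishes, so the analysis of the previous step applies verbatim to $w$ and yields
\[
\iint_\Omega|\nabla w|^2\,dx\le C_*\iint_\Omega\tilde G^2\,dx.
\]
Since $\bar k_{11},\bar k_1$ are $x_2$-independent and $k_{12}$ is itself a perturbation, the embedding $H^3(\Omega)\hookrightarrow C^1(\overline{\Omega})$ gives $\|\partial_{x_2}k_{11}\|_{L^\infty}+\|\partial_{x_2}k_{12}\|_{L^\infty}+\|\partial_{x_2}k_1\|_{L^\infty}=O(\epsilon+\delta_0)$. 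Hence the terms of $\|\tilde G\|_{L^2}$ involving $\partial_{x_1x_2}^2\psi^\sigma$ and $\partial_{x_1}\psi^\sigma$ are absorbed into the left-hand side and into \eqref{qH1-appro0} respectively, leaving only $\|\partial_{x_1}^2\psi^\sigma\|_{L^2}$ to be controlled. For the latter I would split $\Omega$ into the subregion $\{|k_{11}|\ge\kappa_*/2\}$, where the identity $k_{11}\partial_{x_1}^2\psi^\sigma=G_0-\partial_{x_2}^2\psi^\sigma-2k_{12}\partial_{x_1x_2}^2\psi^\sigma-k_1\partial_{x_1}\psi^\sigma-\sigma\partial_{x_1}^3\psi^\sigma$ can be inverted directly, and a thin layer around the sonic curve, where the smallness of $\partial_{x_2}k_{11}$ together with the already established bound on $\|\nabla w\|_{L^2}$ permit Young-type absorption after using \eqref{qH1-appro0} on the dissipative term.

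The hardest part is expected to be this last step: the $\sigma$-uniform control of $\|\partial_{x_1}^2\psi^\sigma\|_{L^2}$ across the sonic curve where $k_{11}$ degenerates. The approximate boundary conditions $\partial_{x_1}^2\psi^\sigma(L_j,\cdot)=0$, deliberately chosen different from the first-order conditions of \cite{Kuzmin2002}, are engineered precisely so that the $\sigma$-dissipation contributes a favorably signed bulk term without generating $\sigma$-dependent boundary defects, which is what permits the uniform-in-$\sigma$ $H^2$ estimate.
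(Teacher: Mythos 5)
Your derivation of \eqref{qH1-appro0} is correct and essentially the paper's: the multiplier $d(x_1)\p_{x_1}\psi^\sigma$, with the boundary conditions $\p_{x_1}^2\psi^\sigma(L_j,\cdot)=0$ annihilating the $\sigma$-boundary term and the sign $d<0$ making the dissipative bulk term favorable, is exactly what the paper does. (The paper handles the cross term $-6\sigma\iint\p_{x_1}^2\psi^\sigma\p_{x_1}\psi^\sigma$ by Young, you by a second integration by parts; cosmetic. One small slip: $k_{11}(L_0)d(L_0)$ is \emph{negative}, not positive as you wrote, since $k_{11}(L_0)>0$ and $d(L_0)<0$; the boundary contribution is nevertheless favorably signed because the evaluation $\big|_{L_0}^{L_1}$ introduces an extra minus at $L_0$.)

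The proof of \eqref{qH2-appro}, however, has a genuine gap at the step where you try to control $\|\p_{x_1}^2\psi^\sigma\|_{L^2}$. You propose to invert the equation in the region $\{k_{11}\ge\kappa_*/2\}$, writing $\p_{x_1}^2\psi^\sigma = k_{11}^{-1}\bigl(G_0 - \p_{x_2}^2\psi^\sigma - 2k_{12}\p_{x_1x_2}^2\psi^\sigma - k_1\p_{x_1}\psi^\sigma - \sigma\p_{x_1}^3\psi^\sigma\bigr)$, but the last term on the right is \emph{not} $\sigma$-uniformly bounded in $L^2$: \eqref{qH1-appro0} gives only $\sqrt{\sigma}\,\|\p_{x_1}^2\psi^\sigma\|_{L^2}\le C$, which says nothing about $\sigma\|\p_{x_1}^3\psi^\sigma\|_{L^2}$. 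The proposed ``Young-type absorption using \eqref{qH1-appro0} on the dissipative term'' in the transition layer faces the same obstruction, and you do not explain how it bypasses it. There is also a logical circularity in the ordering: your estimate for $w=\p_{x_2}\psi^\sigma$ needs $\iint(\p_{x_1}^2\psi^\sigma)^2$ on its right-hand side (through $\p_{x_2}k_{11}\,\p_{x_1}^2\psi^\sigma$ inside $\tilde G$), which you intend to establish only \emph{afterwards}.

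The paper does not invert the equation. Its $\sigma$-uniform bound on $\p_{x_1}^2\psi^\sigma$ and $\p_{x_1x_2}^2\psi^\sigma$ comes from two additional multiplier estimates that precede the $v_1=\p_{x_2}\psi^\sigma$ step: (i) testing $\mathcal{L}^\sigma\psi^\sigma$ against $\eta_1^2\p_{x_1}^2\psi^\sigma$ with a monotone decreasing cutoff supported near the entrance, where ellipticity of $k_{11}$ and the sign of $\eta_1\eta_1'$ make both the principal part and the $\sigma$-term coercive; and (ii) testing the $x_1$-differentiated equation for $w_1=\p_{x_1}\psi^\sigma$ against $\eta_2^2 d(x_1)\p_{x_1}w_1$, with $\eta_2$ supported away from $L_0$, where coercivity in the transonic region is supplied by the structural inequalities \eqref{apos1}--\eqref{apos2} with $j=1$ (the coefficient of $\p_{x_1}w_1$ becomes $k_3=k_1+\p_{x_1}k_{11}$, which is precisely why the lemma prepares those inequalities for $j=0,1,2,3$). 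The $\sigma$-boundary terms in step (ii) vanish because $\p_{x_1}w_1(L_j,\cdot)=\p_{x_1}^2\psi^\sigma(L_j,\cdot)=0$. Only then does the estimate for $\p_{x_2}\psi^\sigma$ close, since the already-established bound on $\iint(\p_{x_1}^2\psi^\sigma)^2$ is used to control the commutator on its right-hand side. You should replace your inversion step by this intermediate multiplier argument for $\p_{x_1}\psi^\sigma$.
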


\begin{proof}

We omit the superscript $\sigma$ in the following argument. Choosing the same multiplier as in Lemma \ref{qH1estimate} yields
\be\no
&&\iint_{\Omega} d(x_1)\p_{x_1} \psi G_0 dx_1 dx_2=  \iint_{\Omega} d(x_1)\p_{x_1} \psi \mathcal{L}^{\sigma}\psi dx_1 dx_2\\\no
&&=\iint_{\Omega}-\sigma d (\p_{x_1}^2\psi)^2- 6 \sigma \p_{x_1}^2 \psi \p_{x_1} \psi+ \left(k_1 d-\frac{1}{2}\p_{x_1}(k_{11}d)-\p_{x_2} k_{12} d\right)(\p_{x_1} \psi)^2+ \frac{1}{2}d'(x_1) (\p_{x_2}\psi)^2  dx\\\no
&&\quad+\int_{-1}^1\sigma d \p_{x_1}\psi \p_{x_1}^2 \psi \b|_{x_1=L_0}^{L_1}dx_2+\frac{1}{2}\int_{-1}^1 \left(k_{11} d(\p_{x_1} \psi)^2 - d (\p_{x_2}
\psi)^2\right)\b|_{x_1=L_0}^{L_1} dx_2.
\ee

Since $d(x_1)=6x_1-6 d_0<0$ for all $x_1\in [L_0,L_1]$, the first term is a positive term, the first boundary term vanishes due to \eqref{qap}, and
\be\no
6\sigma \iint_{\Omega} \p_{x_1} \psi\p_{x_1}^2\psi dx_1 dx_2\leq \frac{-\sigma}{4}\iint_{\Omega}d(x_1) (\p_{x_1}^2\psi)^2 dx_1 dx_2 + 36\sigma\iint_{\Omega} (\p_{x_1} \psi)^2 dx_1 dx_2.
\ee

As in Lemma \ref{qH1estimate}, for sufficiently small $\sigma>0$, there holds
\be\label{q2d22}
&&\sigma \iint_{\Omega} |\p_{x_1}^2\psi|^2 dx_1 dx_2 + \iint_{\Omega} |\nabla \psi|^2 dx_1 dx_2
\\\no
&&\quad\quad +\int_{-1}^1 (\p_{x_1}\psi(L_0,x_2))^2+\sum_{j=1}^2(\p_{x_j}\psi(L_1,x_2))^2 dx_2\leq C_*\iint_{\Omega} G_0^2 dx_1 dx_2.
\ee
Since $\psi(L_0,x_2)\equiv 0$, \eqref{qH1-appro0} follows.

Choose a monotonic decreasing cut-off function $\eta_1\in C^{\infty}([L_0,L_1])$ such that $0\leq \eta_1(x_1)\leq 1$ for all $x_1\in [L_0,L_1]$ and
\be\no
\eta_1(x_1)=\begin{cases}
1,\ \ \ & L_0\leq x_1\leq \frac{L_0}{2},\\
0,\ \ \ & \frac{L_0}{4}\leq x_1\leq L_1.
\end{cases}\ee

Multiplying the equation \eqref{qap} by $\eta_1^2 \p_{x_1}^2 \psi$ and integrating by parts give
\be\no
&&\iint_{\Omega} \eta_1^2 (k_{11}(\p_{x_1}^2 \psi)^2 +(\p_{x_1x_2}^2 \psi)^2)- \sigma \eta_1 \eta_1' (\p_{x_1}^2 \psi)^2 dx_1 dx_2 =-\iint_{\Omega} 2 \eta_1^2
k_{12} \p_{x_1x_2}^2 \psi\p_{x_1}^2 \psi \\\no
&& -\iint_{\Omega} 2 \eta_1 \eta_1'  \p_{x_1x_2}^2\psi \p_{x_2}\psi dx_1 dx_2 +\iint_{\Omega}\eta_1^2(G_0(x_1,x_2) - k_1\p_{x_1} \psi) \p_{x_1}^2 \psi dx_1 dx_2.
\ee
Since $\eta_1$ is monotonically decreasing, so $- \iint_{\Omega}\sigma \eta_1 \eta_1' (\p_{x_1}^2 \psi)^2$ is a positive term. Also by \eqref{elliptic}, $k_{11}(x_1,x_2)\geq \kappa_*$ for some $\kappa_*>0$ on $(x_1,x_2)\in [L_0,L_0/8]\times [-1,1]$. It follows from the H\"{o}der's inequality and \eqref{qH1} that
\be\nonumber
&\quad&\kappa_*\int_{L_0}^{\frac{L_0}{2}}\int_{-1}^1 |\p_{x_1}^2 \psi|^2 + |\p_{x_1x_2}^2 \psi|^2 dx_1 dx_2\leq \kappa_*\iint_{\Omega}\eta_1^2( |\p_{x_1}^2 \psi|^2 + |\p_{x_1x_2}^2 \psi|^2) dx_1 dx_2\\\no
&\leq& \frac{1}{2}\kappa_*\iint_{\Omega} \eta_1^2 (|\p_{x_1}^2 \psi|^2 + |\p_{x_1x_2}^2 \psi|^2) dx_1 dx_2+\frac{C}{\kappa_*}\iint_{\Omega}G_0^2(x_1,x_2)+(k_1^2+|\eta_1'|^2)|\nabla \psi|^2 dx_1 dx_2\\\label{qaH21}
&\leq&C_*\iint_{\Omega} G_0^2(x_1,x_2) dx_1 dx_2.
\ee

Choose another monotonic increasing cut-off function $\eta_2\in C^{\infty}([L_0,L_1])$ such that $0\leq \eta_2(x_1)\leq 1$ for all $x_1\in [L_0,L_1]$ and
\be\no
\eta_2(x_1)=\begin{cases}
0,\ \ \ & L_0\leq x_1\leq \frac{3L_0}{4},\\
1,\ \ \ & \frac{L_0}{2}\leq x_1\leq L_1.
\end{cases}\ee

Denote $w_1=\p_{x_1}\psi$. Then $w_1$ solves
\be\label{qappro-w1}\begin{cases}
\displaystyle\sigma \p_{x_1}^3 w_1 + \sum_{i,j=1}^2 k_{ij}\p_{x_i x_j}^2 w_1 + k_3 \p_{x_1} w_1 + k_4 \p_{x_2} w_1=G_1,\\
\p_{x_2} w_1(x_1,\pm 1)=0,\ \ \ \forall x_1\in [L_0,L_1],\\
\p_{x_1} w_1(L_0,x_2)= \p_{x_1} w_1(L_1,x_2)=0,\ \forall x_2\in [-1,1],
\end{cases}\ee
where
\be\no
&&k_3= k_1 + \p_{x_1} k_{11},\ \ \  \ k_4=2 \p_{x_1} k_{12},\ \ \ G_1= \p_{x_1} G_0-\p_{x_1} k_1\p_{x_1} \psi.
\ee

Multiplying the equation in \eqref{qappro-w1} by $\eta_2^2 d(x_1) \p_{x_1} w_1$ and integrating over $\Omega$, one gets after integration by parts that
\be\no
&&\iint_{\Omega}-\sigma\eta_2^2 d (\p_{x_1}^2 w_1)^2- \sigma\p_{x_1}(\eta_2^2 d)\p_{x_1}^2 w_1\p_{x_1} w_1dx_1 dx_2+ \sigma\int_{-1}^1 d \eta_2^2 \p_{x_1} w_1 \p_{x_1}^2 w_1\bigg|_{x_1=L_0}^{L_1} dx_2\\\no
&&+\iint_{\Omega} [\eta_2^2d k_3 -\frac{1}{2}\p_{x_1}(\eta_2^2d k_{11})-\eta_2^2d \p_{x_2} k_{12}](\p_{x_1} w_1)^2 dx_1 dx_2\\\no
&&+\iint_{\Omega} [\frac{\eta_2^2}{2}d'(x_1)+\eta_2\eta_2' d ](\p_{x_2} w_1)^2+ \eta_2^2 k_4d \p_{x_1} w_1 \p_{x_2} w_1  dx_1
dx_2\\\no
&& + \frac{1}{2}\int_{-1}^1\eta_2^2 d [k_{11} (\p_{x_1} w_1)^2- (\p_{x_2}
w_1)^2]\bigg|_{x_1=L_0}^{L_1} dx_2= \iint_{\Omega} \eta_2^2 d \p_{x_1} w_1 G_1
dx_1 dx_2.
\ee
Due to the boundary conditions, the first boundary integral vanishes. It follows from \eqref{apos1}-\eqref{apos2} and \eqref{qcoe} that there exists a constant $\delta_*>0$ such that if $0<\delta_0\leq \delta_*$, one has
\begin{eqnarray}\no
&&\eta_2^2 k_3 d-\frac12 \p_1(\eta_2^2 d k_{11})-\eta_2^2 d\partial_{x_2} k_{12}\\\no
&&=\frac{1}{2}\eta_2^2 d(2 k_1+\p_{x_1} k_{11}-2 \p_2 k_{12})- \frac{1}{2}k_{11}(\eta_2^2 d'+ 2 \eta_2 \eta_2' d)\\\no
&&\geq \frac12\eta_2^2 d(2\bar{k}_1 + \bar{k}_{11}' -\|k_1-\bar{k}_{1}\|_{L^{\infty}}-\|\p_1 k_{11}-\bar{k}_{11}'\|_{L^{\infty}}-\|\partial_{x_2} k_{12}\|_{L^{\infty}})\\\no
&&\quad\quad\quad-\frac{1}{2}k_{11}(\eta_2^2 d'+ 2 \eta_2 \eta_2' d)\geq 2\eta_2^2- k_{11}\eta_2 \eta_2' d, \ \ \forall (x_1,x_2)\in \Omega,\\\no
&&\frac{1}{2}\eta_2^2 d'(x_1)+\eta_2 \eta_2' d = 3\eta_2^2 + \eta_2 \eta_2' d,\ \forall (x_1,x_2)\in \Omega,\\\no
&&|\eta_2^2k_4d|\leq C_*\delta_0\eta_2^2\leq C_* \delta_*\eta_2^2,
\end{eqnarray}
due to the Sobolev embedding $H^3(\Omega)\subset C^{1,\alpha}(\overline{\Omega})$ with $\alpha\in(0,1)$.

 Then using \eqref{qaH21}, one could infer that
\be\no
&&\iint_{\Omega}\eta_2^2 |\nabla w_1|^2 dx_1 dx_2+\sigma\int_{\frac{1}{2}L_0}^{L_1}\int_{-1}^1 (\p_{x_1}^2 w_1)^2 dx_1 dx_2+ \int_{-1}^1 (\p_{x_2}
w_1)^2(L_1,x_2)dx_2\\\no
&&\leq C_*\int_{\frac{3}{4}L_0}^{\frac{1}{2}L_0}\int_{-1}^1 |\nabla w_1|^2+ C_*\iint_{\Omega} G_1^2 dx_1 dx_2.
\ee
This, together with \eqref{qaH21}, shows that
\be\label{qaH22}
\iint_{\Omega} |\p_{x_1}^2\psi|^2 + |\p_{x_1x_2}^2 \psi|^2 dx_1 dx_2 \leq \iint_{\Omega} |G_0|^2 +|\nabla G_0|^2 dx_1 dx_2.
\ee

It remains to estimate $\p_{x_2}^2\psi$. Define $v_1=\p_{x_2} \psi$. Then one has
\be\label{qappro-v1}\begin{cases}
\displaystyle\sigma \p_{x_1}^3 v_1 + \sum_{i,j=1}^2 k_{ij} \p_{x_i x_j}^2 v_1 + (k_1 + 2 \p_{x_2} k_{12})\p_{x_1} v_1=\p_{x_2}G_0-\p_{x_2} k_{11} \p_{x_1}^2 \psi- \p_{x_2} k_1 \p_{x_1}\psi,\\
v_1(L_0,x_2)=\p_{x_1}^2 v_1(L_0,x_2)=0,\ \ \ \forall x_2\in [-1, 1],\\
\p_{x_1}^2 v_1(L_1,x_2)=0,\ \ \ \forall x_2\in [-1, 1],\\
v_1(x_1,\pm 1)=0,\ \ \forall x_1\in [L_0,L_1].
\end{cases}\ee
Multiplying the equation in \eqref{qappro-v1} by $d(x_1)\p_{x_1} v_1$ and integrating over $\Omega$, one gets by integration by parts that
\be\no
&&\iint_{\Omega} d(x_1)\p_{x_1} v_1 (\p_{x_2}G_0-\p_{x_2} k_{11} \p_{x_1}^2 \psi- \p_{x_2} k_1 \p_{x_1}\psi) dx\\\no
&&=\iint_{\Omega}-d \sigma (\p_{x_1}^2v_1)^2-6\sigma \p_{x_1} v_1\p_{x_1}^2 v_1 + \left((k_1+\p_{x_2} k_{12}) d-\frac{1}{2}\p_{x_1}(k_{11}d)\right)(\p_{x_1} v_1)^2+\frac{1}{2}d'(x_1) (\p_{x_2}v_1)^2 dx\\\no
&&\quad+\int_{-1}^1 \sigma d \p_{x_1}v_1 \p_{x_1}^2 v_1\b|_{x_1=L_0}^{L_1}dx_2+\frac{1}{2}\int_{-1}^1 \left(k_{11} d(\p_{x_1} v_1)^2 - d (\p_{x_2}
v_1)^2\right)\b|_{x_1=L_0}^{L_1} dx_2\\\no
&&\quad+ \int_{L_0}^{L_1} \left(k_{12} d (\p_{x_1} v_1)^2 + d \p_{x_1} v_1 \p_{x_2} v_1\right)\b|_{x_2=-1}^1 dx_1.
\ee
Since $k_{11}(L_1,x_2)<0, k_{11}(L_0,x_2)>0$ for any $x_2\in [-1,1]$ and $d(x_1)<0$ for any $x_1\in [L_0,L_1]$, the above equality further implies
\be\no
&&\sigma \iint_{\Omega} |\p_{x_1}^2v_1|^2 dx_1 dx_2 + \iint_{\Omega} |\nabla v_1|^2 dx_1 dx_2 +\int_{-1}^1 (\p_{x_1}v_1(L_0,x_2))^2+\sum_{j=1}^2(\p_{x_j} v_1(L_1,x_2))^2
dx_2\\\label{qaH23}
&&\leq C_*\iint_{\Omega} |\p_{x_2}G_0|^2+ (\p_{x_1}^2 \psi)^2 +|\nabla \psi|^2 dx_1 dx_2\leq \iint_{\Omega} |G_0|^2+|\nabla G_0|^2 dx_1 dx_2.
\ee
The proof of Lemma \ref{qH1-appro} is completed.

\end{proof}

The approximate solutions can be constructed by using the Galerkin's method with Fourier series expansion as follows. Let $\{b_j(x_2)\}_{j=1}^{\infty}$ be a family of all eigenfunctions associated to the eigenvalue problem
\be\label{eigenfunction}\begin{cases}
-u''(x_2)= \lambda u(x_2),\  x_2 \in (-1,1),\\
u'(-1)= u'(1)=0.
\end{cases}\ee
Indeed, one may choose $\{b_j(x_2)\}$ as
\be\no
\{b_j(x_2)\}_{j=1}^{\infty}= \bigg\{\frac{1}{\sqrt{2}}\bigg\}\cup \bigg\{\cos(j\pi x_2)\bigg\}_{j=1}^{\infty} \cup \bigg\{\sin (\frac{2j+1}{2}\pi x_2)\bigg\}_{j=0}^{\infty},
\ee
which forms a complete orthonormal basis in $L^2((-1,1))$ and an orthogonal basis in $H^1((-1,1))$.

Define the approximate solutions as
\be\no
\psi^{N,\sigma}(x_1,x_2)= \sum_{j=1}^N A_j^{N,\sigma}(x_1)b_j(x_2),
\ee
which satisfies the following $N$ linear equations on $(L_0,L_1)$ with boundary conditions
\be\label{qapp1}\begin{cases}
\int_{-1}^1 \mathcal{L}^{\sigma} \psi^{N,\sigma}(x_1,x_2) b_{m}(x_2) dx_2= \int_{-1}^1 G_0(x_1,x_2) b_m(x_2) dx_2,\ m=1,\cdots, N,\\
\psi^{N,\sigma}(L_0,x_2)=\p_{x_1}^2 \psi^{N,\sigma}(L_0,x_2)=0,\\
\p_{x_1}^2 \psi^{N,\sigma}(L_1,x_2)=0.
\end{cases}\ee

Therefore $A_j^{N,\sigma}$ should solve the following boundary value problem for an ordinary differential system
\be\label{qappro1}\begin{cases}
\displaystyle\sigma \frac{d^3}{dx_1^3} A_m^{N,\sigma} +\displaystyle\sum_{j=1}^N  a_{jm}^{N,\sigma}\frac{d^2}{dx_1^2} A_j^{N,\sigma}+  b_{jm}^{N,\sigma}\frac{d}{dx_1}
A_j^{N,\sigma}
+ c_{jm}^{N,\sigma} A_j^{N,\sigma} = G_{0m}(x_1),\ \\
A_{m}^{N,\sigma}(L_0)=\frac{d^2}{dx_1^2}A_m^{N,\sigma}(L_0)=0,\ \ m=1,\cdots, N,\\
\frac{d^2}{dx_1^2}A_m^{N,\sigma}(L_1)=0,
\end{cases}\ee
where
\be\no
&&a_{jm}^{N,\sigma}(x_1)=\int_{-1}^1 k_{11}(x_1,x_2)b_j(x_2)b_m(x_2) dx_2,\\\no
&&b_{jm}^{N,\sigma}(x_1)=\int_{-1}^1 (2 k_{12}(x_1,x_2)b_j'(x_2)b_m(x_2)+k_1(x_1,x_2)b_j(x_2)b_m(x_2) dx_2,\\\no
&&c_{jm}^{N,\sigma}(x_1)=\int_{-1}^1 -\lambda_j b_j(x_2)b_m(x_2)dx_2=-\lambda_j \delta_{jm},\\\no
&&G_{0m}(x_1)= \int_{-1}^1 G_0(x_1,x_2) b_m(x_2) dx_2.
\ee

\begin{lemma}\label{exist1}
{\it There exists a unique smooth solution $\{A_{m}^{N,\sigma}(x_1)\}_{m=1}^N$ to \eqref{qappro1} such that $\psi^{N,\sigma}(x_1,x_2)= \sum_{j=1}^N A_j^{N,\sigma}(x_1)b_j(x_2)$ satisfies the following estimate
\be\label{qaH25}
\iint_{\Omega} |\psi^{N,\sigma}|^2+ |\nabla\psi^{N,\sigma}|^2+ |\nabla^2\psi^{N,\sigma}|^2 dx_1 dx_2\leq C_*\iint_{\Omega} |G_0|^2 + |\nabla G_0|^2 dx_1 dx_2,
\ee
where the constant $C_*$ depends only on the $H^3(\Omega)$ norms of the coefficients $k_{11}, k_{12}$ and $k_1$.
}\end{lemma}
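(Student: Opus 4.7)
The plan is to combine the Fredholm alternative for linear two-point boundary value problems with a priori estimates inherited from Lemma~\ref{qH1-appro}, re-derived at the Galerkin level.

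First I would view \eqref{qappro1} as a system of $N$ linear third-order ordinary differential equations on $[L_0,L_1]$ for the unknowns $(A_1^{N,\sigma},\ldots,A_N^{N,\sigma})$, with smooth coefficients and $3N$ linear boundary conditions (two at $x_1=L_0$ and one at $x_1=L_1$, for each $m$). Since $\sigma>0$, the principal term is non-degenerate, and after writing the system in first-order form, classical ODE theory applies: existence of a unique smooth solution follows, via the Fredholm alternative for linear BVPs, from uniqueness for the homogeneous problem with $G_{0m}\equiv 0$.

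The uniqueness, in turn, will be extracted from the energy estimate \eqref{qaH25} transferred to the Galerkin setting. The key observation is that the Galerkin identity means, for each fixed $x_1\in(L_0,L_1)$, that the residual $\mathcal{L}^\sigma\psi^{N,\sigma}(x_1,\cdot)-G_0(x_1,\cdot)$ is $L^2$-orthogonal to $\mathrm{span}\{b_1,\ldots,b_N\}$. The multipliers used in Lemma~\ref{qH1-appro} --- $d(x_1)\partial_{x_1}\psi$, $\eta_1^2\partial_{x_1}^2\psi$ and $\eta_2^2 d(x_1)\partial_{x_1}^2\psi$ --- all lie in $\mathrm{span}\{b_1,\ldots,b_N\}$ at each fixed $x_1$, because $\partial_{x_1}^k\psi^{N,\sigma}=\sum_{j=1}^N (A_j^{N,\sigma})^{(k)}(x_1)\,b_j(x_2)$. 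Thus the corresponding integration-by-parts identities reproduce \eqref{qH1-appro0} and the interior bounds \eqref{qaH21}--\eqref{qaH22} verbatim for $\psi^{N,\sigma}$, uniformly in $N$ and $\sigma$.

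The remaining ingredient is the $\partial_{x_2}^2$-estimate \eqref{qaH23}, derived in Lemma~\ref{qH1-appro} by applying a multiplier to the equation for $v_1=\partial_{x_2}\psi$. To emulate this at the Galerkin level, I would differentiate the Galerkin identity termwise in $x_2$ to obtain a system for $v_1^{N,\sigma}:=\partial_{x_2}\psi^{N,\sigma}$ analogous to \eqref{qappro-v1}. Since the chosen eigenbasis satisfies $b_j'(\pm 1)=0$, the boundary traces on $x_2=\pm 1$ produced by testing the differentiated identity against $d(x_1)\partial_{x_1}v_1^{N,\sigma}$ vanish, which is what makes the multiplier admissible even though $v_1^{N,\sigma}$ lies a priori only in $\mathrm{span}\{b_j'\}$ rather than $\mathrm{span}\{b_j\}$. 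This recovers \eqref{qaH23} at the Galerkin level, and summing the three pieces yields \eqref{qaH25}.

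The main obstacle is precisely this last step: ensuring that multipliers involving $\partial_{x_2}$-derivatives are compatible with the Galerkin truncation, which depends on the specific choice of the mixed cosine--sine basis $\{b_j\}$ whose derivatives respect the Neumann compatibility at $x_2=\pm 1$. Once \eqref{qaH25} is established, the case $G_0\equiv 0$ forces $\psi^{N,\sigma}\equiv 0$ and hence $A_m^{N,\sigma}\equiv 0$; the Fredholm alternative then delivers the unique smooth solution of \eqref{qappro1}, and \eqref{qaH25} is the desired uniform bound.
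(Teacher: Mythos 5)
Your proposal is correct and follows essentially the same route as the paper. The paper also derives the Galerkin-level $H^1$ energy identity (ODE4)/(qappro2), invokes the "uniqueness implies existence" principle for the linear third-order ODE boundary value problem (spelling it out via fundamental solutions and a linear algebraic system, which is exactly the Fredholm alternative you cite), and then upgrades to $H^2$ by the three multipliers you list. For the $\partial_{x_2}^2$ piece, the paper multiplies the $m$-th Galerkin equation by $d(x_1)\lambda_m\frac{d}{dx_1}A_m^{N,\sigma}$ and uses $-b_m''=\lambda_m b_m$; your version — differentiate the Galerkin identity in $x_2$, test against $d(x_1)\partial_{x_1}v_1^{N,\sigma}$, and use $b_j'(\pm 1)=0$ to kill the boundary traces — is the same computation after one integration by parts in $x_2$, and you correctly identified the eigenbasis compatibility $b_j'(\pm 1)=0$ as the key reason this is admissible. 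The only cosmetic gap is that for the $\partial_{x_1}^2$ estimate one first differentiates the ODE system once in $x_1$ before applying $\eta_2^2 d\,\partial_{x_1}$, which you gloss as "verbatim"; this is harmless since the coefficients are smooth and Galerkin projection commutes with $\partial_{x_1}$.
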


\begin{proof}

Using the functions $d(x_1)$ defined in Lemma \ref{qH1estimate}, multiplying the $m^{th}$ equation in \eqref{qappro1} by $d(x_1)\frac{d}{dx_1} A_m^{N,\sigma}$, summing from
$1$ to $N$, integrating over $[L_0,L_1]$, one can get that
\begin{eqnarray}\label{ODE4}
\iint_{\Omega}(\mathcal{L}^{\sigma} \psi^{N,\sigma}-G_0)d(x_1)\partial_{x_1} \psi^{N,\sigma}dx_1 dx_2=0.
\end{eqnarray}
An integration by parts as in Lemma \ref{qH1-appro} yields
\be\label{qappro2}
&&\sigma \iint_{\Omega} |\p_{x_1}^2 \psi^{N,\sigma}|^2 dx_1 d x_2 + \iint_{\Omega} |\psi^{N,\sigma}|^2 + |\nabla\psi^{N,\sigma}|^2 dx_1 dx_2\\\no
&&\quad +\int_{-1}^1 (\p_{x_1}\psi^{N,\sigma}(L_0,x_2))^2+\sum_{j=1}^2(\p_{x_j}\psi^{N,\sigma}(L_1,x_2))^2 dx_2\leq C \iint_{\Omega} G_0^2(x_1,x_2) dx_1 dx_2.
\ee
This estimate implies the uniqueness of the solution to the Problem \eqref{qappro1}. For the system of $N$ third-order equations endowed with $3N$ boundary conditions, the
uniqueness ensures the existence of the solution to \eqref{qappro1}. Indeed, set
\be\no
Y_{3m-2}(x_1)=A_m(x_1),\ \ Y_{3m-1}(x_1)= A_m'(x_1),\ \ Y_{3m}(x_1)= A_m''(x_1),\ \ m=1,2,\cdots, N,
\ee
where the superscripts $N$ and $\sigma$ are dropped to simplify the notations. Then the $3N$-dimensional vector functions ${\bf Y}(x_1)=(Y_1,Y_2,\cdots, Y_{3N})^t$ satisfy the following ODE system
\be\label{ode11}
\begin{cases}
Y_{3m-2}'(x_1)-Y_{3m-1}(x_1)=0,\ \ \ 1\leq m\leq N,\\
Y_{3m-1}'(x_1)-Y_{3m}(x_1)=0,\ \ \ 1\leq m\leq N,\\
\sigma Y_{3m}'(x_1)+\sum_{j=1}^{N}a_{jm}  Y_{3j}+ b_{jm} Y_{3j-1}+ c_{jm} Y_{3j-2}= G_{0m}(x_1),\ \ \ 1\leq m\leq N.
\end{cases}\ee

Let $\overline{{\bf Y}}$ be the unique solution to \eqref{ode11} with the initial data ${\bf Y}(L_0)=0$ and ${\bf Y}^{(j)}(x_1) (1\leq j\leq 3N)$ be the unique solution to
\be\label{ode12}
\begin{cases}
Y_{3m-2}'(x_1)-Y_{3m-1}(x_1)=0,\ \ \ 1\leq m\leq N,\\
Y_{3m-1}'(x_1)-Y_{3m}(x_1)=0,\ \ \ 1\leq m\leq N,\\
\sigma Y_{3m}'(x_1)+\displaystyle\sum_{j=1}^{N}a_{jm}  Y_{3j}+ b_{jm} Y_{3j-1}+ c_{jm} Y_{3j-2}= 0,\ \ \ 1\leq m\leq N,\\
{\bf Y}(L_0)={\bf e}^{(j)}=(0,0,\cdots,0,1,0,\cdots,0)^t,
\end{cases}\ee
where ${\bf e}^{(j)}$ represents the $j^{th}$ coordinate vector in the $3N$ dimensional Euclidean space.

Thus any smooth solution to \eqref{ode11} can be represented as
\be\no
{\bf Y}(x_1)= \overline{{\bf Y}}(x_1) + \sum_{j=1}^{3N} \mu_j {\bf Y}^{(j)}(x_1),
\ee
where $\mu_1,\cdots, \mu_{3N}$ are arbitrary real numbers. To find a solution to the Problem \eqref{qappro1}, it suffices to determine $\mu_1,\cdots, \mu_{3N}$ so that \be\label{alge1}\begin{cases}
\displaystyle\sum_{j=1}^{3N} \mu_j Y_{3m}^{(j)}(L_0)=0,\ \ \ 1\leq m\leq N,\\
\displaystyle\sum_{j=1}^{3N} \mu_j Y_{3m}^{(j)}(L_1)=-\overline{Y}_{3k}(L_1),\ \ \ 1\leq m\leq N,\\
\displaystyle\sum_{j=1}^{3N} \mu_j Y_{3m-2}^{(j)}(L_0)=0,\ \ \ 1\leq m\leq N.
\end{cases}\ee
The existence of the solution to the linear algebraic equations \eqref{alge1} will follow from the uniqueness of the corresponding homogenous linear algebraic equations in \eqref{alge1}. Suppose that $(\mu_1,\cdots, \mu_{3N})^t$ is a solution to the corresponding homogenous linear algebraic equations in \eqref{alge1}. Then $\sum_{j=1}^{3N} \mu_j {\bf Y}^{(j)}(x_1)$ solves the ODE system in \eqref{ode12} with boundary conditions
\be\no
Y_{3m}(L_0)= Y_{3m}(L_1)=Y_{3m-2}(L_0)=0,\ \ \forall 1\leq m\leq N.
\ee
Then it follows from the energy estimate \eqref{qappro2} to \eqref{qappro1} that $\mu_1=\mu_2=\cdots=\mu_{3N}=0$. This yields the existence and uniqueness of the solution to \eqref{qappro1}. Since the coefficients of the \eqref{qappro1} are $C^3$ smooth, the solutions $A_m^{N,\sigma}$ are $C^4$
smooth on $[L_0,L_1]$. Thus the existence of the approximate solution to the system \eqref{qap} is established.

Furthermore, with $\eta_1$ and $\eta_2$ given in Lemma \ref{qH1-appro}, multiplying the $m^{th}$ equation in \eqref{qappro1} by $\eta_1^2(x_1)\frac{d^2}{dx_1^2} A_m^{N,\sigma}$, summing from $1$ to $N$, and integrating over $[L_0,L_1]$, one can argue as for \eqref{qaH21} to get
\be\label{qH2a21}
\int_{L_0}^{\frac{L_0}{2}}\int_{-1}^1 |\p_{x_1}^2 \psi^{N,\sigma}|^2 +|\p_{x_1x_2}^2 \psi^{N,\sigma}|^2 dx_1 d x_2\leq \iint_{\Omega} G_0^2 dx_1 dx_2.
\ee
Set $w_1^{N,\sigma}=\p_{x_1}\psi^{N,\sigma}=\sum_{m=1}^N w_{1,m}^{N,\sigma}(x_1)b_m(x_2)$, where $w_{1,m}^{N,\sigma}(x_1)=\frac{d}{dx_1} A_m^{N,\sigma}$. Taking
$\frac{d}{dx_1}$ on each equation in \eqref{qappro1}, then multiplying it by $\eta_2^2(x_1)\frac{d}{dx_1} w_{1,m}^{N,\sigma}$, summing from $1$ to $N$ and integrating over
$[L_0,L_1]$, after some computations, one could obtain the estimate \eqref{qaH22} for $\psi^{N,\sigma}$ with a uniform constant $C_*$ for $N,\sigma$. Finally, to get the estimate $\p_{x_2}^2 \psi^{N,\sigma}$, one can multiply the $m^{th}$ equation in \eqref{qappro1} by $d(x_1)\lambda_m\frac{d}{dx_1} A_m^{N,\sigma}$, sum from $1$ to $N$, and integrate over $[L_0,L_1]$, where $\lambda_m$ is the eigenvalue associated with $b_m(x_2)$. Using $-b_m''(x_2)=\lambda_m b_m(x_2)$ and then integration by parts yield the estimate \eqref{qaH23} for $\psi^{N,\sigma}$ with a uniform constant $C_*$ for $N$ and $\sigma$. In summary, the $H^2$ estimate \eqref{qaH25} for $\psi^{N,\sigma}$ follows.

\end{proof}

\begin{lemma}\label{exist2}
{\it There exists a unique $H^2$ strong solution $\psi(x_1,x_2)$ to \eqref{qlinearized2} with the estimate
\be\label{h2}
\iint_{\Omega} |\psi|^2+ |\nabla\psi|^2+ |\nabla^2\psi|^2 dx_1 dx_2\leq C_*\iint_{\Omega} |G_0|^2 + |\nabla G_0|^2 dx_1 dx_2,
\ee
where the constant $C_*$ depends only on the $H^3(\Omega)$ norms of the coefficients $k_{11}, k_{12}$ and $k_1$.
}\end{lemma}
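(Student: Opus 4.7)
The plan is to pass to the limit twice in the two–parameter approximation scheme already built up in Lemmas \ref{qH1-appro} and \ref{exist1}: first send $N\to\infty$ with $\sigma>0$ fixed to obtain a strong solution $\psi^\sigma$ of the singularly perturbed problem \eqref{qap}, then send $\sigma\to 0^+$ to obtain a strong $H^2$ solution $\psi$ of \eqref{qlinearized2}. Uniqueness and the bound \eqref{h2} will then follow from the linear $H^1$ estimate \eqref{qH1} and weak lower semicontinuity.

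For the first limit, fix $\sigma>0$. The uniform bound \eqref{qaH25} from Lemma \ref{exist1} and the Banach--Alaoglu theorem produce a subsequence with $\psi^{N,\sigma}\rightharpoonup\psi^\sigma$ in $H^2(\Omega)$ and $\psi^{N,\sigma}\to\psi^\sigma$ strongly in $H^1(\Omega)$. Since $\{b_j\}_{j=1}^\infty$ is a complete orthonormal basis of $L^2((-1,1))$, one may test the Galerkin relations \eqref{qapp1} against any finite linear combination $\sum_{m=1}^{M}\alpha_m(x_1)b_m(x_2)$ with $\alpha_m\in C_c^\infty((L_0,L_1))$ and pass to the limit to conclude that $\psi^\sigma$ satisfies the equation in \eqref{qap} as an $L^2$ identity. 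The conditions $\psi^\sigma(L_0,\cdot)=0$ and $\p_{x_2}\psi^\sigma(x_1,\pm 1)=0$ pass by the continuity of the trace on $H^1(\Omega)$, while the second-order conditions $\p_{x_1}^2\psi^\sigma(L_0,\cdot)=\p_{x_1}^2\psi^\sigma(L_1,\cdot)=0$ are inherited from the pointwise vanishing of $\frac{d^2}{dx_1^2}A_m^{N,\sigma}$ at $L_0,L_1$ in the ODE system \eqref{qappro1}.

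For the second limit, the crucial input is that Lemma \ref{qH1-appro} yields $\|\psi^\sigma\|_{H^2(\Omega)}\leq C_*(\|G_0\|_{L^2}+\|\nabla G_0\|_{L^2})$ with $C_*$ independent of $\sigma\in(0,1]$, together with the dissipative control $\sigma\|\p_{x_1}^2\psi^\sigma\|_{L^2}^2\leq C_*\|G_0\|_{L^2}^2$. Extract $\psi^{\sigma_k}\rightharpoonup\psi$ in $H^2(\Omega)$. The third-order term satisfies
\[
\|\sigma_k\p_{x_1}^3\psi^{\sigma_k}\|_{H^{-1}(\Omega)}\leq \sigma_k\|\p_{x_1}^2\psi^{\sigma_k}\|_{L^2(\Omega)}\leq \sigma_k^{1/2}\,C_*^{1/2}\|G_0\|_{L^2(\Omega)}\to 0,
\]
so it drops out of the equation in the distributional sense. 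The remaining second-order operator has fixed $H^3\hookrightarrow C^{1,\alpha}$ coefficients, hence it converges in $L^2$ on $\psi^{\sigma_k}$ by weak $H^2$ convergence, and $\psi$ satisfies the equation in \eqref{qlinearized2} a.e. in $\Omega$. The original trace conditions $\psi(L_0,\cdot)=0$ and $\p_{x_2}\psi(x_1,\pm 1)=0$ pass to the limit; the artificial conditions $\p_{x_1}^2\psi^\sigma(L_{0,1},\cdot)=0$ have no counterpart in the limit Keldysh problem, which requires no condition at $x_1=L_1$, and simply disappear. The estimate \eqref{h2} follows from weak lower semicontinuity of $\|\cdot\|_{H^2(\Omega)}$, and uniqueness follows because the difference of two candidate $H^2$ solutions is an $H^1$ solution of the homogeneous problem, which Lemma \ref{qH1estimate} forces to vanish.

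The main obstacle is the passage to the limit $\sigma\to 0^+$ near the endpoints $x_1=L_0,L_1$, where weak boundary layers in $\p_{x_1}^2\psi^\sigma$ could in principle survive and contaminate the limit. This is precisely where the authors' deliberate choice of the second-order boundary conditions $\p_{x_1}^2\psi^\sigma(L_{0,1},\cdot)=0$ in the singular perturbation \eqref{qap} (as opposed to the first-order choice in \cite{Kuzmin2002}) is indispensable: it produces weak enough boundary layers to secure the uniform $H^2$ bound \eqref{qH2-appro}, which is the only analytic input strong enough to make both limit passages compatible.
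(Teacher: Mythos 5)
Your proposal is correct and follows essentially the same two-stage limiting scheme as the paper: pass $N\to\infty$ at fixed $\sigma$ using the uniform $H^2$ bound \eqref{qaH25}, then pass $\sigma\to 0$ using the $\sigma$-uniform bound of Lemma \ref{qH1-appro}, with the third-order term dropping out and the estimate \eqref{h2} and uniqueness following from lower semicontinuity and Lemma \ref{qH1estimate}. One small imprecision worth flagging: your claim that the second-order boundary conditions $\p_{x_1}^2\psi^\sigma(L_0,\cdot)=\p_{x_1}^2\psi^\sigma(L_1,\cdot)=0$ are ``inherited'' in the $N\to\infty$ limit is not justified, since weak $H^2$ convergence does not control traces of second derivatives, and similarly ``satisfies the equation in \eqref{qap} as an $L^2$ identity'' overstates what is known about $\psi^\sigma$ (which is only in $H^2$, not $H^3$); the paper sidesteps both issues by working instead with the once-integrated-by-parts weak formulation \eqref{qwf2} against test functions vanishing at $x_1=L_0,L_1$, which is exactly what you actually use when bounding $\sigma_k\p_{x_1}^3\psi^{\sigma_k}$ in $H^{-1}$ --- so neither imprecision affects the validity of the argument.
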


\begin{proof}
Consider the sequence of approximate solutions $\psi^{N,\sigma}$ as $N\to\infty$. Thanks to \eqref{qappro2}, $\|\psi^{N,\sigma}\|_{H^2(\Omega)}$ is uniformly
bounded in $N$. Therefore, due to the weak compactness of a bounded set in a Hilbert space, there exists a subsequence, denoted  by $\psi^{N,\sigma}$ for simplicity, which converges strongly in $H^1(\Omega)$ and weakly in $H^2(\Omega)$ to a limit $\psi^{\sigma}\in H^2(\Omega)$. Furthermore, $\psi^{\sigma}$ satisfies the following uniform estimate:
\be\label{qconv1}
\|\psi^{\sigma}\|_{H^2(\Omega)}\leq C_* \|G_0\|_{H^1(\Omega)}.
\ee

Due to the strong convergence $\psi^{N,\sigma}\to \psi^{\sigma}$ as $N\to \infty$ in $H^1$, $\psi^{\sigma}$ retains the boundary conditions
\be\label{sigmabcs}\begin{cases}
\psi^{\sigma}(L_0,x_2)=0, \forall x_2\in (-1,1),\\
\p_{x_2}\psi^{\sigma}(x_1,\pm 1)=0,\forall x_1\in (L_0,L_1).
\end{cases}\ee
Now we show that $\psi^{\sigma}$ is a weak solution to the system \eqref{qap}. Given any test function $\chi(x_1,x_2)= \sum_{m=1}^{N_0} \chi_m(x_1) b_m(x_2)$, where $\chi_m(x_1)\in
C_c^{\infty}((L_0,L_1))$. Let $N\geq N_0$. Multiplying each of equations in \eqref{qappro1} by $\xi_m$ ($\xi_m\equiv 0$ for any $N_0+1\leq j\leq N$), then sum up from
$m=1$ to $m=N$, and integrate with respect to $x_1$ from $L_0$ to $L_1$, one gets that
\be\label{qwf1}
\iint_{\Omega}(\sigma \p_{x_1}^3 \psi^{N,\sigma}+\sum_{i,j=1}^2 k_{ij}\p_{x_i x_j}^2 \psi^{N,\sigma}+ \sum_{i=1}^2 k_i \p_{x_i}\psi^{N,\sigma}) \xi dx_1 dx_2=0.
\ee
Integrating by parts and passing to the limit for the above weak convergent subsequence of $\psi^{N,\sigma}$ yield
\be\label{qwf2}
&&\iint_{\Omega}-\sigma \p_{x_1}^2\psi^{\sigma}\p_{x_1}\xi- \p_{x_1}\psi^{\sigma}\p_{x_1}(k_{11}\xi)\\\no
&&\quad\quad- 2\p_{x_2} \psi^{\sigma}\p_{x_1}(k_{12}\xi)- \p_{x_2}\psi^{\sigma} \p_{x_2} \xi+ \xi\sum_{i=1}^2 k_j \p_{x_j}\psi^{\sigma} dx_1dx_2=0.
\ee

By a density argument, the weak formulation \eqref{qwf2} holds for any test function $\xi\in H^1(\Omega)$ vanishing at $x_1=L_0$ and $x_1=L_1$. Next we consider a sequence of approximate solutions $\{\psi^{\sigma}\}$ as $\sigma\to 0$. Thanks to \eqref{qappro2}, $\|\psi^{\sigma}\|_{H^2(\Omega)}$ is
uniformly bounded independent of $\sigma$. This further implies the existence of a weakly convergent subsequence labeled as $\{\psi^{\sigma_j}\}_{j=1}^{\infty}$ with $\sigma_j\to 0$ as $j\to \infty$ converging weakly to a limit $\psi\in H^2(\Omega)$. Thus $\psi$ also retains the boundary condition
\be\label{psi-bc}\begin{cases}
\psi(L_0,x_2)=0, \forall x_2\in (-1,1),\\
\p_{x_2}\psi(x_1,\pm 1)=0,\forall x_1\in (L_0,L_1).
\end{cases}\ee

It follows from \eqref{qwf2} that
\be\label{qlw}
\iint_{\Omega}- \p_{x_1}\psi\p_{x_1}(k_{11}\xi)- 2\p_{x_2} \psi \p_{x_1}(k_{12}\xi)- \p_{x_2}\psi \p_{x_2} \xi+ \xi\sum_{i=1}^2 k_j \p_{x_j}\psi dx_1dx_2=0,
\ee
holds for any $\xi\in H^1(\Omega)$ vanishing at $x_1=L_0$ and $x_1=L_1$. Since $\psi\in H^2(\Omega)$, then $\psi$ indeed is a strong solution to \eqref{qlinearized2} and the equation in \eqref{qlinearized2} holds almost everywhere.

\end{proof}

\subsection{The $H^4$ estimate}\label{se33}\noindent

Since the equation in \eqref{qlinearized2} is elliptic in $\Omega_{\frac{1}{8}}:= (L_0,\frac{L_0}{8})\times (-1,1)$, it is easy to derive the $H^4$ estimate of $\psi$ on $\Omega_{\frac{1}{8}}$.

\begin{lemma}({\bf $H^4$ estimate on subsonic region.})\label{qH3}
{\it There exists a constant $\delta_*>0$ depending only on the background flow, such that if $0<\delta_0\leq \delta_*$ in \eqref{qcoe}, the solution to
\eqref{qlinearized2} satisfies the following basic energy estimate
\be\label{qH31}
\int_{L_0}^{\frac{1}{4}L_0} \int_{-1}^1 |\nabla^3\psi|^2+|\nabla^4\psi|^2 dx_1 dx_2 \leq C_*\|G_0\|_{H^2(\Omega)}^2,
\ee
with a constant $C_*$ depending only on the $H^3(\Omega)$ norms of the coefficients $k_{11}, k_{12}$ and $k_1$.
}\end{lemma}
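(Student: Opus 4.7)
The plan is to exploit the uniform ellipticity of \eqref{qlinearized2} on the subsonic region $\Omega_{1/8} = [L_0, L_0/8]\times[-1,1]$, where \eqref{elliptic} gives $k_{11}\geq \kappa_* > 0$, so the equation reduces to a uniformly elliptic equation with $C^3$ coefficients. Since the target domain $[L_0, L_0/4]\times[-1,1]$ sits strictly inside this region, it suffices to appeal to elliptic regularity up to the Dirichlet boundary $x_1=L_0$ and the Neumann boundary $x_2=\pm 1$ (the two are compatible at the corners $(L_0,\pm 1)$ because $\psi(L_0,\cdot)\equiv 0$ automatically satisfies $\partial_{x_2}\psi(L_0,\pm 1)=0$). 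An $x_1$-cutoff isolates this elliptic region from the mixed-type behaviour near and past the sonic curve at $x_1=0$.

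Concretely, choose a monotonically decreasing $\eta_3\in C^{\infty}([L_0,L_1])$ with $\eta_3\equiv 1$ on $[L_0, L_0/4]$ and $\eta_3\equiv 0$ on $[L_0/8, L_1]$. For $k=1,2$ set $w_k = \partial_{x_2}^k\psi$ and differentiate \eqref{qlinearized2} $k$ times in $x_2$; this yields a uniformly elliptic equation for $w_k$ with right-hand side whose $L^2(\Omega)$-norm is bounded by $\|G_0\|_{H^k(\Omega)}+\|\psi\|_{H^{k+1}(\Omega)}$, which in turn is controlled by $\|G_0\|_{H^{k+1}(\Omega)}$ through Lemma \ref{exist2}. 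I would then apply the multiplier $\eta_3^2 d(x_1)\partial_{x_1}w_k$ used in Lemma \ref{qH1estimate}: the cutoff kills the spurious boundary contributions at $x_1=L_0/8$, the commutator $\eta_3\eta_3' w_k$ is supported in $[L_0/4, L_0/8]$ and is absorbed into the already-controlled $H^2(\Omega)$-norm of $\psi$, and the boundary integrals at $x_2=\pm 1$ either vanish or reduce to lower-order terms, thanks to the compatibility conditions $k_{12}(x_1,\pm 1) = \partial_{x_2}k_{11}(x_1,\pm 1) = \partial_{x_2}k_1(x_1,\pm 1)=0$ in \eqref{qcoe}. Combined with $k_{11}\geq \kappa_*$ and the sign property $\bar k_1 d-\tfrac{1}{2}(\bar k_{11}d)'\geq 3$ in \eqref{apos2}, this produces
\begin{equation*}
\|\nabla w_k\|_{L^2([L_0,L_0/4]\times(-1,1))} \lesssim \|G_0\|_{H^{k+1}(\Omega)},
\end{equation*}
yielding control of every mixed derivative $\partial_{x_1}^i\partial_{x_2}^j\psi$ with $j\geq 1$ and $i+j\leq 4$ on the strip $[L_0,L_0/4]\times(-1,1)$.

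Finally, the pure normal derivatives $\partial_{x_1}^3\psi$ and $\partial_{x_1}^4\psi$ are recovered algebraically by rewriting \eqref{qlinearized2} as
\begin{equation*}
\partial_{x_1}^2\psi = \frac{1}{k_{11}}\bigl(G_0 - 2 k_{12}\partial_{x_1 x_2}^2\psi - \partial_{x_2}^2\psi - k_1\partial_{x_1}\psi\bigr),
\end{equation*}
and differentiating this identity in $x_1$ once or twice; the lower bound $k_{11}\geq \kappa_*$ together with the $C^3$ regularity of the coefficients allow this quotient and its derivatives to be estimated directly in terms of the mixed-derivative bounds obtained in the previous step, giving \eqref{qH31}. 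The main technical obstacle is the careful bookkeeping of the boundary integrals at $x_2=\pm 1$ after the second tangential differentiation: one must iteratively verify, using the compatibility conditions in \eqref{qcoe} and the vanishing of all tangential traces of $\psi$ at $x_1=L_0$ (which follow by $x_2$-differentiation of $\psi(L_0,\cdot)\equiv 0$), that the surface terms either vanish identically or absorb into the $H^2(\Omega)$-norm of $\psi$ already controlled by Lemma \ref{exist2}. Beyond this verification, the argument is a routine elliptic bootstrap.
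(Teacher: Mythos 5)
The core idea---exploit uniform ellipticity on $\Omega_{1/8}$, differentiate tangentially in $x_2$, and recover pure $\partial_{x_1}$-derivatives from the equation---is the same one the paper uses. But the scheme as you have set it up does not reach $H^4$.

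The multiplier $\eta_3^2\,d(x_1)\partial_{x_1}w_k$ gives an $H^1$-type estimate for $w_k$: you obtain $\|\nabla w_k\|_{L^2}$, which for $w_k=\partial_{x_2}^k\psi$ controls only the $(k+1)$-th order derivatives. With $k=1,2$ the best this yields directly is $\nabla w_2=(\partial_{x_1}\partial_{x_2}^2\psi,\partial_{x_2}^3\psi)$, i.e.\ third order. Your claim that this controls "every mixed derivative $\partial_{x_1}^i\partial_{x_2}^j\psi$ with $j\geq 1$ and $i+j\leq 4$" overshoots by one. The algebraic bootstrap cannot close the gap: writing $\partial_{x_1}^2\psi = k_{11}^{-1}\bigl(G_0-2k_{12}\partial_{x_1x_2}^2\psi-\partial_{x_2}^2\psi-k_1\partial_{x_1}\psi\bigr)$ and differentiating once in $x_1$ and once in $x_2$ (to reach $\partial_{x_1}^3\partial_{x_2}\psi$, say) produces $\partial_{x_1}^2\partial_{x_2}^2\psi$ and $\partial_{x_1}\partial_{x_2}^3\psi$ on the right, which are exactly the fourth-order mixed terms not yet controlled. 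So one closes at $H^3$ but stalls there.

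There are two standard repairs, and the paper takes the second. Either push to $k=3$, using $w_3=\partial_{x_2}^3\psi$, which requires (i) the boundary trace $\partial_{x_2}^3\psi(x_1,\pm 1)=0$ (the paper derives this by differentiating the equation once in $x_2$ on the elliptic region and using the compatibility conditions in \eqref{qcoe}), and (ii) care with terms like $\partial_{x_2}^3k_{11}\cdot\partial_{x_1}^2\psi$ on the right, which only land in $L^2$ after invoking $\psi\in H^3\hookrightarrow C^{1,\alpha}$ from the $k=2$ step; or upgrade from $H^1$ to $H^2$ estimates per tangential differentiation. The paper does the latter: it reflects $\psi$, $k_{ij}$, $k_1$, and $G_0$ symmetrically across $x_2=\pm 1$ (the compatibility conditions in \eqref{qcoe} ensure the extensions remain $H^3$), and then applies the $L^2$ elliptic theory of Gilbarg--Trudinger (Theorems 8.8, 8.9, 8.12) to $V_1=\partial_{x_2}\psi_e$ and $V_2=\partial_{x_2}^2\psi_e$, gaining two derivatives per step; the pure $\partial_{x_1}$-derivatives are then read off from the equation exactly as you propose. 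Note also that this symmetric reflection is not an optional nicety: your remark that the Dirichlet data $\psi(L_0,\cdot)\equiv 0$ is compatible with the Neumann data at $(L_0,\pm1)$ is a necessary first-order observation, but full $H^4$ regularity up to the corners requires either reflection or an equivalent mechanism, and your proposal does not supply one.
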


\begin{proof}

According to \eqref{elliptic}, the equation in \eqref{qlinearized2} is elliptic in $\Omega_{\frac{1}{8}}$. Since the coefficients $k_{ij}, k_1\in C^3(\overline{\Omega})$, by the elliptic theory in \cite{gt}, $\psi\in C^{4,\alpha}(\overline{\Omega_{\frac{1}{8}}}\setminus\{(L_0, \pm 1)\})$. Differentiating the equation in \eqref{qlinearized2} with respect to $x_2$ in $\overline{\Omega_{\frac{1}{8}}}$ and evaluating at $(x_1,\pm 1)$ yield
\be\label{c1}
\p_{x_2}^3 \psi(x_1,\pm 1)=0,\ \ \ \forall x_1\in (L_0,\frac{L_0}{8}).
\ee
We now prove the $H^4$ estimate of $\psi$ in $\Omega_{\frac{1}{4}}:= (L_0,\frac{L_0}{4})\times (-1,1)$.

To improve the regularity near the corner point $(L_0,\pm 1)$, one can use the standard symmetric extension technique. Indeed, extend $k_{12}$ an $G_0$ from $(L_0,\frac{L_0}{8})\times (-1,1)$ to
$D_1:=(L_0,\frac{L_0}{8})\times (-3,3)$ as
\be\no
(K_{12}, \mathcal{G}_0)(x_1,x_2)=\begin{cases}
-(k_{12}, G_0)(x_1,2-x_2), \ \ &\forall x_2\in (1,3),\\
(k_{12}, G_0)(x_1,x_2),\ \ \ & \forall x_2\in [-1,1],\\
-(k_{12}, G_0)(x_1,-2-x_2), \ \ &\forall x_2\in (-3,-1),
\end{cases}\ee
while extend $\psi, k_{11}$ and $k_1$ as
\be\no
(\psi_e,K_{11},K_1)(x_1,x_2)=\begin{cases}
(\psi,k_{11},k_1)(x_1,2-x_2), \ \ &\forall x_2\in (1,3),\\
(\psi,k_{11},k_1)(x_1,x_2),\ \ \ & \forall x_2\in [-1,1],\\
(\psi,k_{11},k_1)(x_1,-2-x_2), \ \ &\forall x_2\in (-3,-1).
\end{cases}\ee
Since $\p_{x_2}\psi(x_1,\pm 1)=\p_{x_2}k_1(x_1,\pm 1)=\p_{x_2}^2 k_{12}(x_1,\pm 1)=\p_{x_2}^3\psi(x_1,\pm 1)=0$ for any $x_1\in [L_0,\frac{1}{8}L_0]$, then $\psi_e\in C^{4,\alpha}(D_1)$, $K_{11}, K_{12}, K_1\in C^{2,\alpha}(D_1)$ and also
\be\no
\|\psi_e\|_{H^2(D_1)}\leq C_*\|G_0\|_{H^1(\Omega)}, \ \ \|K_{ij}\|_{H^3(D_1)}+\|K_1\|_{H^3(D_1)}\leq C_*(\|k_{ij}\|_{H^3(\Omega)}+\|k_1\|_{H^3(\Omega)}).
\ee
Then $\psi_e$ satisfies
\be\label{p1}\begin{cases}
\displaystyle\sum_{i,j=1}^2 K_{ij} \p_{x_ix_j}^2 \psi_e + K_1 \p_{x_1} \psi_e =\mathcal{G}_0(x_1,x_2),\ \forall (x_1,x_2)\in
D_1,\\
\psi_e(L_0,x_2)=0,\ \ \ \forall x_2\in (-3,3),\\
\p_{x_2}\psi_e(x_1, \pm 1)=0,\ \ \forall x_1\in [L_0,\frac18 L_0],
\end{cases}
\ee

Set $V_1=\p_{x_2} \psi_e$. Since $K_{11}\geq \kappa_*>0$ on subsonic region $[L_0,\frac{L_0}{8}]\times [-1,1]$, one has
\be\label{p2}
\p_{x_1}^2 \psi_e=\frac{1}{K_{11}} (\mathcal{G}_0- 2 K_{12} \p_{x_1} V_1- \p_{x_2} V_1- K_1 \p_{x_1} \psi_e).
\ee
It follows from \eqref{p1} that $V_1$ solves
\be\label{p3}\begin{cases}
\displaystyle\sum_{i,j=1}^2 K_{ij} \p_{x_ix_j}^2 V_1 + K_3 \p_{x_1} V_1 + K_4 \p_{x_2} V_1 =\mathcal{G}_1(x_1,x_2),\ \forall (x_1,x_2)\in
D_1,\\
V_1(L_0,x_2)=0,\ \ \ \forall x_2\in (-3,3),\\
V_1(x_1, \pm 3)=0,\ \ \forall x_1\in (L_0,\frac{1}{8}L_0),
\end{cases}
\ee
where
\be\no\begin{cases}
K_3=K_1 +2 \p_{x_2} K_{12}-\frac{2 \p_{x_2} K_{11} K_{12}}{K_{11}},\quad \quad K_4=-\frac{\p_{x_2} K_{11}}{K_{11}},\\
\mathcal{G}_1=\p_{x_2} \mathcal{G}_0-  \p_{x_2} K_1\p_{x_1}\psi_e-\frac{\p_{x_2} K_{11}}{K_{11}}(\mathcal{G}_0-K_1 \p_{x_1} \psi_e).
\end{cases}\ee

By Theorems 8.8, 8.9 and 8.12 in \cite{gt}, one derives that
\be\label{p5}
&&\|V_1\|_{H^2(D_2)}\leq C_*(\|V_1\|_{L^2(D_1)}^2+ \|\mathcal{G}_1\|_{L^2(D_1)}^2)\\\no
&&\leq C_*\left(\|G_0\|_{L^2(\Omega)}+\|\mathcal{G}_0\|_{H^1(D_1)}+(\|\p_{x_2}K_1\|_{L^{\infty}(D_1)}+\|\frac{ K_{1}\p_{x_2}K_{11}}{K_{11}}\|_{L^{\infty}(D_1)})\|\nabla\psi_e\|_{L^2(D_1)}\right)\\\no
&&\leq C_*(\|G_0\|_{H^1(\Omega)}+ \|\nabla \psi\|_{L^2(\Omega)})\leq C_*\|G_0\|_{H^1(\Omega)},
\ee
where $D_2:=(L_0,\frac{3L_0}{16})\times (-2,2)$. It is noted that according to Theorems 8.8, 8.9 and 8.12 in \cite{gt}, the constant $C_*$ depends only on the ellipticity constant, $\|K_{11}, K_{12}\|_{C^{0,1}(D_1)}$ and $\|K_3,K_4\|_{L^{\infty}(D_1)}$, which can be bounded by a constant $C_*$ depending only on the $H^3(D_2)$ norm of $k_{11}, k_{12}$ and $ k_1$.

This, together with \eqref{p2}, implies that $\p_{x_1}^3 \psi_e$ also admits the same estimate. Therefore we conclude that
\be\label{p6}
\|\psi\|_{H^3(\Omega_{\frac{1}{4}})}\leq\|\psi_e\|_{H^3(D_2)}\leq C_*\|G_0\|_{H^1(\Omega)}.
\ee

On the domain $D_2$, there holds
\be\label{p7}
\p_{x_1}^2 V_1=\frac{1}{K_{11}}(\mathcal{G}_1-2 K_{12}\p_{x_1} V_2-\p_{x_2} V_2- K_3 \p_{x_1} V_1-K_4 \p_{x_2} V_1).
\ee
Denote $V_2=\p_{x_2} V_1$. Then $V_2$ solves
\be\label{p21}\begin{cases}
\displaystyle\sum_{i,j=1}^2 K_{ij} \p_{ij}^2 V_2 + K_5 \p_1 V_2 + K_6 \p_2 V_2 =\mathcal{G}_2(x_1,x_2),\ \forall (x_1,x_2)\in
D_1,\\
V_2(L_0,x_2)=0,\ \ \ \forall x_2\in [-3,3],
\end{cases}
\ee
where
\be\no\begin{cases}
K_5=K_1 +4 \p_{x_2} K_{12}-\frac{4 \p_{x_2} K_{11} K_{12}}{K_{11}},\quad \quad K_4=-\frac{2\p_{x_2} K_{11}}{K_{11}},\\
\mathcal{G}_2=\p_{x_2} \mathcal{G}_1-  \p_{x_2} K_3\p_{x_1}V_1-\p_{x_2} K_4\p_{x_2} V_1-\frac{\p_{x_2} K_{11}}{K_{11}}(\mathcal{G}_1-K_3 \p_{x_1} V_1-K_4 \p_{x_2} V_1).
\end{cases}\ee
It follows from Theorems 8.8, 8.9 and 8.12 in \cite{gt} that on $\Omega_{\frac14}=(L_0,\frac{1}{4}L_0)\times (-1,1)$
\be\label{p22}
\|V_2\|_{H^2(\Omega_{\frac14})}\leq C_*(\|V_2\|_{L^2(D_2)}+\|\mathcal{G}_2\|_{L^2(D_2)})\leq C_*(\|G_0\|_{H^1(\Omega)}+\|\mathcal{G}_2\|_{L^2(D_2)}).
\ee
The term $\|\mathcal{G}_2\|_{L^2(D_2)}$ is estimated as:
\be\no
&&\|\mathcal{G}_2\|_{L^2(D_2)}\leq \|\mathcal{G}_1\|_{H^1(D_2)}+ (\|\p_{x_2} K_3\|_{L^4(D_2)}+\|\p_{x_2} K_4\|_{L^4(D_2)})\|\nabla V_1\|_{L^4(D_2)}\\\no
&&\quad\quad+\|\frac{\p_{x_2} K_{11}}{K_{11}}\|_{L^{\infty}(D_2)}(\|\mathcal{G}_1\|_{L^2(D_2)}+(\|K_3\|_{L^{\infty}(D_2)}+\|K_4\|_{L^{\infty}(D_2)})\|\nabla V_1\|_{L^2(D_2)})\\\no
&&\leq C_*(\|\mathcal{G}_0\|_{H^2(D_2)}+\|\p_{x_2}K_1\|_{H^2(D_2)}\|\p_{x_1}\psi_e\|_{H^1(D_2)}\\\no
&&\quad\quad+\|\frac{\p_{x_2}K_{11}}{K_{11}}\|_{H^1(D_2)}(\|G_0\|_{H^2}+\|K_1\|_{H^2(D_2)}\|\p_{x_1}\psi_e\|_{H^1(D_2)})\\\no
&&\quad\quad +C_*(\|K_3\|_{H^1(D_2)}+\|K_4\|_{H^1(D_2)})\|V_1\|_{H^2(D_2)}+ C_*\|\nabla V_1\|_{L^2(D_2)}\\\no
&&\leq C_*\|\mathcal{G}_0\|_{H^2(D_2)}\leq C_*\|G_0\|_{H^2(\Omega)},
\ee
where one has used the inequality $\|fg\|_{H^1(D)}\leq \|f\|_{H^1(D)}\|g\|_{H^2(D)}$ for a two dimensional bounded domain $D$. Employing the equation \eqref{p7}, one finally derives the estimate \eqref{qH31}.

\end{proof}

To improve the regularity of $\psi$ on the domain $(\frac{1}{4}L_0, L_1)\times (-1,1)$, one can follow basically the idea introduced by Kuzmin \cite{Kuzmin2002} and extend the problem to an auxiliary problem in a larger domain where the equation in \eqref{qlinearized2} becomes elliptic near the exit of the nozzle. To this end, one can first extend the background solution to $[L_0,L_2]$ where $L_2=2L_1$ by simply extending the function $a(x_1)$ to $[L_0,L_2]$ so that $a(x_1)$ is a $C^5$ differentiable function on $[L_0,L_2]$ and $a'(x_1)$ is positive on $(0,L_2]$. By the theory of ordinary differential equation, $(\bar{\rho}, \bar{u})$ can be extended to $[L_0,L_2]$ so that the functions $\bar{k}_1$ and $\bar{k}_{11}$ defined in \eqref{q19} satisfy also the properties in \eqref{apos1}-\eqref{apos2} on $[L_0,L_2]$ if $d_0$ is chosen to be large enough.

Let $\ell=\frac{L_1}{20}$ and define two non-increasing cut-off functions on $[L_0,L_2]$ as follows
\be\no
\xi_1(x_1)=\begin{cases}
1,\ \ &L_0\leq x_1\leq L_1 +2\ell,\\
0,\ \ &L_1+4\ell\leq x_1\leq L_2,
\end{cases},\ \ \xi_2(x_1)=\begin{cases}
1,\ \ &L_0\leq x_1\leq L_1 +\ell,\\
0,\ \ &L_1+2\ell\leq x_1\leq L_2.
\end{cases}
\ee
Define
\be\no
&&\bar{a}_{11}(x_1)= \bar{k}_{11}(x_1)\xi_1(x_1)+ (1-\xi_1(x_1)),\\
&&\bar{a}_1(x_1)= \bar{k}_1(x_1)\xi_2(x_1) - k_0 (1-\xi_2(x_1)),
\ee
where $k_0$ is a positive number to be specified later. Then
\be\no
\bar{a}_{11}(x_1)=\begin{cases}
\bar{k}_{11}(x_1), \ \  x_1\in [L_0,L_1+2\ell],\\
1,\ \ \ x_1\in [L_1+4\ell, L_2],
\end{cases} \bar{a}_{1}(x_1)=\begin{cases}
\bar{k}_{1}(x_1), \ \  x_1\in [L_0,L_1+\ell],\\
-k_0,\ \ \ x_1\in [L_1+2\ell, L_2],
\end{cases}
\ee
and for $j=0,1,2,3$
\be\no
&&2\bar{a}_1 + (2j-1) \bar{a}_{11}'= 2\bar{k}_1\xi_2+ (2j-1)\bar{k}_{11}'\xi_1+ (2j-1)(\bar{k}_{11}-1) \xi_1' -2 k_0(1-\xi_2)\\\no
&&=\begin{cases}
2\bar{k}_1 + (2j-1) \bar{k}_{11}'\leq -\kappa_*<0,\ \ \ \  & L_0\leq x_1\leq L_1+\ell,\\
2\bar{k}_1 + (2j-1) \bar{k}_{11}'+2(\bar{k}_1-k_0)(1-\xi_2),\ \ \ \ &L_1+\ell< x\leq L_1+ 2\ell,\\
(2j-1)\bar{k}_{11}'\xi_1+ (2j-1) (\bar{k}_{11}-1)\xi_1'- 2k_0,\ \ \ &L_1+2\ell<x_1\leq L_1 +4\ell,\\
-2k_0,\ \ \ & L_1+4\ell<x_1\leq L_2.
\end{cases}
\ee
Thus for sufficiently large $k_0$ and $d_0>0$, there holds for any $x_1\in [L_0, L_2]$
\be\label{apos3}
2\bar{a}_1 + (2j-1) \bar{a}_{11}'\leq -\kappa_*<0, \ \ j=0,1,2,3,4,\\\label{apos4}
(\bar{a}_1+j \bar{a}_{11}') d -\frac{1}{2}(\bar{a}_{11} d)'\geq 3,\ \ \ j=0,1,2,3,
\ee
where $d(x_1)= 6(x_1-d_0)<0$ for any $x_1\in [L_0,L_2]$.

Furthermore, we define an extension operator $\mathcal{E}$ which extend any functions $f(x_1,x_2)$ on $\Omega$ to be defined on $\Omega_2=(L_0,L_2)\times (-1,1)$ as
\be\no
\mathcal{E}(f)(x_1,x_2)=\begin{cases}
f(x_1,x_2),\ \ &(x_1,x_2)\in\Omega,\\
\displaystyle\sum_{j=1}^4c_j f(L_1+\frac{1}{j}(L_1-x_1),x_2),\ \ &(x_1,x_2)\in (L_1,L_2)\times (-1,1),
\end{cases}
\ee
where the constants $c_j$ are uniquely determined by the following algebraic equations
\be\no
\sum_{j=1}^4 \left(-\frac{1}{j}\right)^k c_j=1,\ \ k=0,1,2,3.
\ee
The extension operator $\mathcal{E}$ is a bounded operator from $H^j(\Omega)$ to $H^j(\Omega_2)$ for any $j=1,2,3,4$. Then one can define the extension of the operator $\mathcal{L}$ in \eqref{qlinearized2} to the domain $\Omega_2$ as follows
\be\no
&&a_{11}=\bar{a}_{11}+ \mathcal{E}(k_{11}-\bar{k}_{11}),\ \ \ \ a_{22}\equiv 1,\\\no
&&a_{12}=a_{21}=\mathcal{E}(k_{12}), a_1=\bar{a}_1+ \mathcal{E}(k_1-\bar{k}_1),\ \ F_0=\mathcal{E} G_0.
\ee
Then the following estimates hold
\begin{eqnarray}\label{ae}
\begin{cases}
\|a_{11}-\bar{a}_{11}\|_{H^3(\Omega_2)}+\|a_{12}\|_{H^3(\Omega_2)}\leq C_*\delta_0,\ \\
\|a_{1}-\bar{a}_1\|_{H^3(\Omega_2)} \leq C_*\delta_0,\ \ \|F_0\|_{H^3(\Omega_2)}\leq C_*\|G_0\|_{H^3(\Omega)}\leq C_*(\epsilon+\delta_0^2),\ \ \\
a_{12}(x_1,\pm 1)=\p_{x_2}^2a_{12}(x_1,\pm 1)=0,\ \ \ \forall x_1\in [L_0,L_1],\\
\p_{x_2} a_{11}(x_1,\pm 1)=\p_{x_2} a_1(x_1,\pm 1)=\p_{x_2} F_0(x_1,\pm 1)=0.
\end{cases}
\end{eqnarray}

Consider the following auxiliary problem in domain $\Omega_2$:
\be\label{au1}
\begin{cases}
\mathcal{M}\Psi=\displaystyle\sum_{i,j=1}^2 a_{ij} \p_{x_ix_j}^2\Psi + a_1\p_{x_1}\Psi= F_0,\ \ & \text{on } \Omega_2,\\
\Psi(L_0,x_2)=0,\ \  &\text{on } (-1,1),\\
\p_{x_2}\Psi(x_1,\pm 1)=0,\ \ &\text{on }  (L_0,L_2),\\
\p_{x_1}\Psi(L_2,x_2)=0,\ \ \ &\text{on }  (-1,1).
\end{cases}\ee
We would like to prove the existence and uniqueness of $H^2$ strong solution $\Psi$ to \eqref{au1} and derive the higher order estimates for $\nabla\p_{x_1}^2\Psi$ and $\nabla\p_{x_1}^3\Psi$ on the subregion $(\frac{1}{4}L_0,L_1+12\ell)\times (-1,1)$. Furthermore, $\psi=\Psi$ on $\Omega$, which thus leads to the estimates for $\nabla\p_{x_1}^2\psi$ and $\nabla\p_{x_1}^3\psi$ on $\Omega$.

To find a solution to \eqref{au1}, one can still resort to the singular perturbation problem
\be\label{au2}
\begin{cases}
\mathcal{M}^{\sigma}\Psi^{\sigma}=\sigma\p_{x_1}^3\Psi^{\sigma}+\displaystyle\sum_{i,j=1}^2 a_{ij} \p_{x_ix_j}^2\Psi^{\sigma} + a_1\p_{x_1}\Psi^{\sigma}= F_0,\ \ & \text{on } \Omega_2, \ \ \sigma>0,\\
\Psi^{\sigma}(L_0,x_2)=\p_{x_1}^2\Psi^{\sigma}(L_0,x_2)=0,\ \  &\text{on }\ \  (-1,1),\\
\p_{x_2}\Psi^{\sigma}(x_1,\pm 1)=0,\ \ &\text{on }\ \  (L_0,L_2),\\
\p_{x_1}\Psi^{\sigma}(L_2,x_2)=0,\ \ \ &\text{on }\ \  (-1,1).
\end{cases}\ee

One could prove the following $H^2$ estimate for the solution $\Psi^{\sigma}$ to \eqref{au2}.

\begin{lemma}\label{aH2}
{\it There exists a constant $\delta_*>0$ depending only on the background flow, such that if $0<\delta_0\leq \delta_*$ in \eqref{ae}, the classical solution $\Psi^{\sigma}$ to
\eqref{au2} satisfies the following energy estimate
\be\label{aH1}
&&\iint_{\Omega_2} \left(\sigma |\p_{x_1}^2\Psi^{\sigma}|^2 +|\Psi^{\sigma}|^2+|\nabla\Psi^{\sigma}|^2\right) dx\\\no
&&\quad\quad\quad+\int_{-1}^1(\p_{x_1}\Psi^{\si}(L_0,x_2))^2+(\p_{x_2}\Psi^{\si}(L_2,x_2))^2 dx_2  \leq C_* \iint_{\Omega_2} F_0^2 dx,\\\label{aH21}
&&\sigma\int_{\frac{3}{4}L_0}^{L_1+16\ell}\int_{-1}^1|\p_{x_1}^3\Psi^{\sigma}|^2 dx_2 dx_1+\iint_{\Omega_2}|\nabla^2\Psi^{\sigma}(x)|^2 dx \leq C_* \iint_{\Omega_2} F_0^2+|\nabla F_0|^2 dx,
\ee
where the constant $C_*$ depends only on the $H^3(\Omega_2)$ norms of the coefficients $a_{11}, a_{12}$ and $a_1$.
}\end{lemma}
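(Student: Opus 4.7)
The plan is to adapt the multiplier method of Lemma \ref{qH1-appro} to the extended problem \eqref{au2}, exploiting the inequalities \eqref{apos3}--\eqref{apos4} for the extended coefficients together with the smallness condition \eqref{ae}. Two features require care compared with Lemma \ref{qH1-appro}: the domain $\Omega_2$ now contains an elliptic region $[L_1+4\ell,L_2]\times[-1,1]$ where $\bar a_{11}\equiv 1$, and the right-endpoint boundary condition reads $\partial_{x_1}\Psi^{\sigma}(L_2,\cdot)=0$ rather than $\partial_{x_1}^2\Psi^\sigma(L_2,\cdot)=0$.

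For \eqref{aH1}, I would multiply \eqref{au2} by $d(x_1)\partial_{x_1}\Psi^\sigma$, with $d(x_1)=6(x_1-d_0)<0$ on $[L_0,L_2]$, and integrate over $\Omega_2$. The bulk $\sigma$-contribution equals $-\sigma\iint d(\partial_{x_1}^2\Psi^\sigma)^2\,dx\geq 0$ plus an absorbable $\sigma\iint\partial_{x_1}^2\Psi^\sigma\partial_{x_1}\Psi^\sigma$ term; the bulk contribution of the second-order part dominates $2\iint|\nabla\Psi^\sigma|^2\,dx$ by \eqref{apos4} with $j=0$ and \eqref{ae}. All boundary integrals are controlled: the $x_2=\pm 1$ sides vanish by $a_{12}(x_1,\pm 1)=\partial_{x_2}\Psi^\sigma(x_1,\pm 1)=0$; at $x_1=L_0$, $\Psi^\sigma(L_0,\cdot)=0$ gives $\partial_{x_2}\Psi^\sigma(L_0,\cdot)=0$, the hypothesis $\partial_{x_1}^2\Psi^\sigma(L_0,\cdot)=0$ kills the $\sigma$ boundary term, and the surviving $-\tfrac12 a_{11}(L_0,\cdot)d(L_0)(\partial_{x_1}\Psi^\sigma(L_0,\cdot))^2$ has a favorable sign since $a_{11}(L_0)>0$ and $d(L_0)<0$; at $x_1=L_2$ the condition $\partial_{x_1}\Psi^\sigma(L_2,\cdot)=0$ annihilates both the $a_{11}d(\partial_{x_1}\Psi^\sigma)^2$ and $\sigma d\,\partial_{x_1}\Psi^\sigma\partial_{x_1}^2\Psi^\sigma$ boundary terms, leaving the favorably signed $-\tfrac12 d(L_2)(\partial_{x_2}\Psi^\sigma(L_2,\cdot))^2$. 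A Poincar\'e inequality in $x_1$ (via $\Psi^\sigma(L_0,\cdot)=0$) then recovers $\|\Psi^\sigma\|_{L^2(\Omega_2)}$.

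For \eqref{aH21}, I would split $\Omega_2$ into three overlapping regions. Near $L_0$, where $a_{11}\geq\kappa_*>0$, the equation is uniformly elliptic and multiplying by $\eta_1^2\partial_{x_1}^2\Psi^\sigma$ (with the cut-off $\eta_1$ of Lemma \ref{qH1-appro}) gives the $L^2$ bounds on $\partial_{x_1}^2\Psi^\sigma$ and $\partial_{x_1x_2}^2\Psi^\sigma$ on $[L_0,L_0/2]\times[-1,1]$ exactly as in \eqref{qaH21}. In the middle band, differentiate \eqref{au2} in $x_1$ to obtain an equation for $w_1=\partial_{x_1}\Psi^\sigma$ with source $F_1=\partial_{x_1}F_0-(\partial_{x_1}a_1)\partial_{x_1}\Psi^\sigma-\sum_{i,j}(\partial_{x_1}a_{ij})\partial_{x_ix_j}^2\Psi^\sigma$ and induced BCs $\partial_{x_1}w_1(L_0,\cdot)=0$, $w_1(L_2,\cdot)=0$, $\partial_{x_2}w_1(x_1,\pm 1)=0$; choose a cut-off $\eta_2(x_1)$ equal to $1$ on $[L_0/2,L_1+16\ell]$ and supported away from both $L_0$ and $L_2$, multiply the $w_1$-equation by $\eta_2^2 d\,\partial_{x_1}w_1$, and repeat the multiplier argument using \eqref{apos4} with $j=1$. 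All boundary integrals vanish thanks to the cut-off, and the bulk estimate yields uniform bounds on $\partial_{x_1}^2\Psi^\sigma,\partial_{x_1x_2}^2\Psi^\sigma$ on this band together with the desired bound on $\sigma\|\partial_{x_1}^3\Psi^\sigma\|_{L^2((3L_0/4,L_1+16\ell)\times(-1,1))}^2$. Near $L_2$, where $a_{11}\equiv 1$, standard $H^2$ elliptic theory (after symmetric extension across $x_2=\pm 1$ as in Lemma \ref{qH3}) closes the estimate, with the $\sigma\partial_{x_1}^3\Psi^\sigma$ term absorbed into the source via the middle-band bound on the overlap. Finally, $\partial_{x_2}^2\Psi^\sigma$ is estimated by running the parallel multiplier argument on $v_1=\partial_{x_2}\Psi^\sigma$ exactly as in the last step of Lemma \ref{qH1-appro}, now using $v_1(L_0,\cdot)=0$ and $\partial_{x_1}v_1(L_2,\cdot)=0$.

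The main obstacle is the endpoint condition $\partial_{x_1}\Psi^\sigma(L_2,\cdot)=0$: because $w_1(L_2,\cdot)=0$ rather than $\partial_{x_1}w_1(L_2,\cdot)=0$, the global $w_1$-energy argument of Lemma \ref{qH1-appro} does not extend to all of $\Omega_2$, since the boundary term $\sigma\eta_2^2 d\,\partial_{x_1}w_1\partial_{x_1}^2 w_1|_{x_1=L_2}$ is no longer killed by a BC. The localization via $\eta_2$ and the matching elliptic argument on $[L_1+4\ell,L_2]\times[-1,1]$ must therefore be dovetailed with some care: the overlap should lie where $a_{11}$ is already uniformly close to $1$, so that no mixed-type degeneracy enters the elliptic step, and the $\sigma\partial_{x_1}^3\Psi^\sigma$ term appearing on the source side of that elliptic estimate must be controlled uniformly in $\sigma$ by the middle-band bound coming from the $w_1$-argument.
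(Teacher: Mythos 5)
Your multiplier argument for \eqref{aH1} is correct and matches the paper: the $\sigma$-boundary terms at $x_1=L_0, L_2$ vanish by the boundary conditions, the remaining boundary integrals have favorable signs because $a_{11}(L_0)>0$, $d<0$, and the bulk terms are coercive by \eqref{apos3}--\eqref{apos4}. Your near-$L_0$ cut-off argument and the $v_1=\partial_{x_2}\Psi^\sigma$ step for \eqref{aH21} are also fine. The gap is in the near-$L_2$ region, and it is structural in two ways. First, the \emph{order} of the three localized estimates matters: the middle-band multiplier $\eta_2^2 d\,\partial_{x_1}w_1$ produces, after integration by parts, a bulk term $\iint |\eta_2'|\,|\nabla w_1|^2$ whose contribution near the \emph{right} edge of the support of $\eta_2$ (roughly $[L_1+16\ell,\,L_2-\text{const}]$) is controlled by nothing at that stage; the paper therefore establishes the estimate on $[L_1+4\ell,L_2]$ \emph{before} the middle-band step. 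Second, and more seriously, replacing that estimate by ``standard $H^2$ elliptic theory with $\sigma\partial_{x_1}^3\Psi^\sigma$ absorbed into the source'' cannot be made uniform in $\sigma$: elliptic regularity would require $\sigma\|\partial_{x_1}^3\Psi^\sigma\|_{L^2([L_1+4\ell,L_2])}$ to be bounded uniformly, but the middle-band bound $\sigma\iint\eta_2^2(\partial_{x_1}^3\Psi^\sigma)^2\le C$ controls only the overlap $[L_1+4\ell,L_1+16\ell]$ (and there it gives $\sigma\|\partial_{x_1}^3\Psi^\sigma\|_{L^2}\lesssim\sqrt{\sigma}$), while $[L_1+16\ell,L_2]$ is left uncontrolled. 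There is also a circularity: you cannot use the middle-band bound to close the near-$L_2$ step and then use the near-$L_2$ bound to close the middle band.

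What the paper does instead avoids both problems: it multiplies the original equation by $\eta_3^2\partial_{x_1}^2\Psi^\sigma$ where $\eta_3$ is \emph{increasing}, $\eta_3\equiv 0$ on $[L_0,L_1+2\ell]$ and $\eta_3\equiv 1$ on $[L_1+4\ell,L_2]$, so $\eta_3$ does \emph{not} vanish at $L_2$. Integrating the $\sigma$-term by parts produces a boundary integral $\tfrac{\sigma}{2}\int_{-1}^{1}(\partial_{x_1}^2\Psi^\sigma(L_2,x_2))^2\,dx_2\ge 0$, which can be dropped, and a bulk term $\sigma\iint\eta_3\eta_3'(\partial_{x_1}^2\Psi^\sigma)^2\,dx$ which is bounded by the already-established basic estimate \eqref{aH1} (namely $\sigma\iint(\partial_{x_1}^2\Psi^\sigma)^2\le C\|F_0\|_{L^2}^2$), not by anything coming from the middle band. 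The remaining second-order terms give, using $a_{11}\ge \tfrac12$ on $[L_1+4\ell,L_2]$, the bound $\|\nabla\partial_{x_1}\Psi^\sigma\|_{L^2([L_1+4\ell,L_2]\times(-1,1))}\le C\|F_0\|_{L^2}$ directly. With this and the near-$L_0$ bound in hand, the middle-band argument with the cut-off $\eta_4$ (supported on $[\tfrac78 L_0, L_1+18\ell]$, equal to $1$ on $[\tfrac34 L_0, L_1+16\ell]$) closes, since $\eta_4'$ is supported only where $\nabla w_1$ has already been estimated. So: keep the test function $\eta_3^2\partial_{x_1}^2\Psi^\sigma$, let $\eta_3$ be nonzero at $L_2$, control the bad $\sigma$-bulk term by \eqref{aH1}, and do the near-$L_2$ step before the middle band.
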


\begin{proof}
The proof is quite similar to that of Lemma \ref{qH1-appro}. We omit the superscript $\sigma$ to simplify the notations. Due to the boundary conditions in \eqref{au2}, the boundary integral term $\int_{-1}^1 \sigma d \p_{x_1}\Psi \p_{x_1}^2\Psi\b|_{L_0}^{L_2}dx_2$ vanishes. Since \eqref{apos3}-\eqref{apos4} hold, one can argue as in Lemma \ref{qH1-appro} to derive the estimate \eqref{aH1}. Similarly, choose a monotonic decreasing cut-off function $\eta_1\in C^{\infty}([L_0,L_2])$ such that $0\leq \eta_1(x_1)\leq 1$ for all $x_1\in [L_0,L_1]$ and
\be\no
\eta_1(x_1)=\begin{cases}
1,\ \ \ & L_0\leq x_1\leq \frac{L_0}{2},\\
0,\ \ \ & \frac{L_0}{4}\leq x_1\leq L_2.
\end{cases}\ee

Multiplying the equation \eqref{au2} by $\eta_1^2 \p_{x_1}^2 \Psi$ and integrating by parts give
\be\no
&&\iint_{\Omega_2} \eta_1^2 (a_{11}(\p_{x_1}^2 \Psi)^2 +(\p_{x_1x_2}^2 \Psi)^2)- \sigma \eta_1 \eta_1' (\p_{x_1}^2 \Psi)^2 dx =-\iint_{\Omega_2} 2 \eta_1^2
a_{12} \p_{x_1x_2}^2 \Psi\p_{x_1}^2 \Psi dx\\\no
&& -\iint_{\Omega_2} 2 \eta_1 \eta_1'  \p_{x_1x_2}^2\Psi \p_{x_2}\Psi dx +\iint_{\Omega_2}\eta_1^2(F_0 - a_1\p_{x_1} \Psi) \p_{x_1}^2 \Psi dx.
\ee
Then as in Lemma \ref{qH1-appro}, one may conclude that
\be\label{aH22}
\int_{L_0}^{\frac{1}{2}L_0} \int_{-1}^1 |\nabla\p_{x_1}\Psi|^2 dx_2 dx_1\leq C_*\|F_0\|_{L^2(\Omega_2)}^2.
\ee

Choose a monotonic increasing cut-off function $\eta_3\in C^{\infty}([L_0,L_2])$ such that $0\leq \eta_3\leq 1$ for all $x_1\in [L_0,L_2]$ and
\be\no
\eta_3(x_1)=\begin{cases}
0,\ \ \ & L_0\leq x_1\leq L_1+2\ell,\\
1,\ \ \ & L_1+4\ell\leq x_1\leq L_2.
\end{cases}\ee

Multiplying the equation \eqref{au2} by $\eta_3^2 \p_{x_1}^2 \Psi$ and integrating by parts yield that
\be\no
&&\frac{\sigma}{2}\int_{-1}^1 (\p_{x_1}^2\Psi(L_2,x_2))^2 dx_2+\iint_{\Omega_2} \eta_3^2 (a_{11}(\p_{x_1}^2 \Psi)^2 +(\p_{x_1x_2}^2 \Psi)^2)dx =\sigma \iint_{\Omega_2}\eta_3 \eta_3' (\p_{x_1}^2 \Psi)^2 dx\\\no
&& -\iint_{\Omega_2} 2 \eta_3^2
a_{12} \p_{x_1x_2}^2 \Psi\p_{x_1}^2 \Psi +\eta_3 \eta_3'  \p_{x_1x_2}^2\Psi \p_{x_2}\Psi dx +\iint_{\Omega_2}\eta_3^2(F_0- a_1\p_{x_1} \Psi) \p_{x_1}^2 \Psi dx.
\ee
Using \eqref{aH1} to control the term involving $\sigma$, one gets from the above identity that
\be\label{aH23}
\int_{L_1+4\ell}^{L_2}\int_{-1}^1 |\nabla\p_{x_1}\Psi|^2 dx_1 dx_2 \leq C_*\iint_{\Omega_2} F_0^2 dx_1 dx_2.
\ee

Set $W_1=\p_{x_1}\Psi$. Then $W_1$ solves
\be\label{aw1}\begin{cases}
\displaystyle\sigma \p_{x_1}^3 W_1 + \sum_{i,j=1}^2 a_{ij}\p_{x_i x_j}^2 W_1 + a_3 \p_{x_1} W_1 + a_4 \p_{x_2} W_1=F_1,\ \ \text{on }\ \ \Omega_2,\\
\p_{x_2} W_1(x_1,\pm 1)=0,\ \ \ \text{on }\ \  [L_0,L_2],\\
\p_{x_1} W_1(L_0,x_2)= W_1(L_2,x_2)=0,\ \text{on }\ \ \ [-1,1],
\end{cases}\ee
where
\be\no
a_3= a_1+ \p_{x_1} a_{11},\ a_4=2\p_{x_1} a_{12},\ F_1=\p_{x_1}F_0 -\p_{x_1} a_1\p_{x_1}\Psi.
\ee
Define the smooth cutoff function $0\leq \eta_4(x_1)\leq 1$ on $[L_0,L_2]$ as
\be\no
\eta_4(x_1)=\begin{cases}
0,\ \ \ & L_0\leq x_1\leq \frac{7}{8}L_0,\\
1,\ \ \ & \frac{3}{4}L_0\leq x_1\leq L_1+16\ell,\\
0,\ \ \ & L_1+18\ell\leq x_1\leq L_2.
\end{cases}
\ee

Multiplying the equation \eqref{aw1} by $\eta_4^2 d(x_1) \p_{x_1} W_1$ and integrating over $\Omega$, after some integrations by parts, one can obtain
\be\no
&&\iint_{\Omega_2}-\sigma\eta_4^2 d (\p_{x_1}^2 W_1)^2dx_1 dx_2+\iint_{\Omega_2} [(\eta_4^2d a_3 -\frac{1}{2}\p_{x_1}(\eta_4^2d a_{11})-\eta_4^2d \p_{x_2} a_{12})](\p_{x_1} W_1)^2 dx_1 dx_2\\\no
&&+\iint_{\Omega_2} [\frac{\eta_4^2}{2}d'(x_1)+\eta_4\eta_4' d ](\p_{x_2} W_1)^2+ \eta_4^2 a_4d \p_{x_1} W_1 \p_{x_2} W_1  dx_1
dx_2\\\no
&& = \iint_{\Omega_2} \eta_4^2 d \p_{x_1} W_1 F_1 dx_1 dx_2+\sigma\iint_{\Omega_2} \p_{x_1}(\eta_4^2 d)\p_{x_1}^2 W_1\p_{x_1} W_1 dx_1 dx_2.
\ee
Then this and \eqref{aH1} imply that
\be\no
&&\iint_{\Omega_2}\sigma \eta_4^2(\p_{x_1}^2 W_1)^2+ \eta_4^2 |\nabla W_1|^2 dx_1 dx_2\\\no
&&\leq C_*\iint_{\Omega_2} |\eta_4'| |\nabla W_1|^2+F_1^2 dx\leq C_*\iint_{\Omega_2} |F_0|^2 +|\nabla F_0|^2 dx_1 dx_2,
\ee
where one has used the fact that the support of $\eta_4'$ is contained in $(\frac{7}{8} L_0, \frac{3}{4}L_0)\cup (L_1+16\ell, L_1+18\ell)$ and thus $\iint_{\Omega_2}|\eta_4'||\nabla W_1|^2 dx$ can be controlled by \eqref{aH22}-\eqref{aH23}.

Collecting \eqref{aH22}-\eqref{aH23} yields that
\be\label{aH24}
\sigma\int_{\frac{3}{4}L_0}^{L_1+16\ell}\int_{-1}^1|\p_{x_1}^3\Psi|^2 dx_2 dx_1+\iint_{\Omega_2} |\nabla\p_{x_1}\Psi|^2 dx_1 dx_2 \leq \iint_{\Omega_2} |F_0|^2 +|\nabla F_0|^2 dx_1 dx_2.
\ee

It remains to estimate $\p_{x_2}^2\Psi$. Define $V_1=\p_{x_2} \Psi$. Then
\be\label{av1}\begin{cases}
\displaystyle\sigma \p_{x_1}^3 V_1 + \sum_{i,j=1}^2 a_{ij} \p_{x_i x_j}^2 V_1 + (a_1 + 2 \p_{x_2} a_{12})\p_{x_1} V_1=\p_{x_2}F_0-\p_{x_2} a_{11} \p_{x_1}^2 \Psi- \p_{x_2} a_1 \p_{x_1}\Psi,\\
V_1(L_0,x_2)=\p_{x_1}^2 V_1(L_0,x_2)=0,\ \ \ \forall x_2\in [-1, 1],\\
\p_{x_1} V_1(L_2,x_2)=0,\ \ \ \forall x_2\in [-1, 1],\\
V_1(x_1,\pm 1)=0,\ \ \forall x_1\in [L_0,L_2].
\end{cases}\ee
Multiplying the equation in \eqref{av1} by $d(x_1)\p_{x_1} V_1$ and integrating over $\Omega_2$ lead to
\be\no
&&\iint_{\Omega_2} d(x_1)\p_{x_1} V_1 (\p_{x_2}F_0-\p_{x_2} a_{11} \p_{x_1}^2 \Psi- \p_{x_2} a_1 \p_{x_1}\Psi) dx_1 dx_2\\\no
&&=\iint_{\Omega_2}-d \sigma (\p_{x_1}^2V_1)^2-6\sigma \p_{x_1} V_1\p_{x_1}^2 V_1 + \left((a_1+\p_{x_2} a_{12}) d-\frac{1}{2}\p_{x_1}(a_{11}d)\right)(\p_{x_1} V_1)^2dx\\\no
&&\quad+\iint_{\Omega_2}\frac{1}{2}d'(x_1) (\p_{x_2}V_1)^2 dx_1 dx_2+\frac{1}{2}\int_{-1}^1 \left(a_{11} d(\p_{x_1} V_1)^2 - d (\p_{x_2}
V_1)^2\right)\b|_{x_1=L_0}^{L_2} dx_2.
\ee
Recalling $a_{11}(L_0,x_2)>0$ for any $x_2\in [-1,1]$ and $d(x_1)<0$ for any $x_1\in [L_0,L_2]$, one gets from the above equality that
\be\no
&&\sigma \iint_{\Omega_2} |\p_{x_1}^2V_1|^2 dx + \iint_{\Omega_2} |\nabla V_1|^2 dx +\int_{-1}^1 (\p_{x_1}V_1(L_0,x_2))^2+\p_{x_2} V_1(L_2,x_2))^2
dx_2\\\label{aH25}
&&\leq C_*\iint_{\Omega_2} (|\p_{x_2}F_0|^2+ (\p_{x_1}^2 \Psi)^2 +|\nabla \Psi|^2) dx\leq \iint_{\Omega_2} (|F_0|^2+|\nabla F_0|^2) dx.
\ee
The proof of Lemma \ref{aH2} is completed.

\end{proof}

Then one can prove easily that
\begin{lemma}\label{equality}
{\it There exists a unique strong solution $\Psi\in H^2(\Omega_2)$ to \eqref{au1} with the estimate
\be\label{au100}
\|\Psi\|_{H^2(\Omega_2)}\leq C_*\|F_0\|_{H^1(\Omega_2)},
\ee
where $C_*$ depends only on the $H^3(\Omega_2)$ norms of the coefficients $a_{11}, a_{12}$ and $a_1$. Moreover, the solution $\Psi$ coincides with the unique strong solution $\psi\in H^2(\Omega)$ to \eqref{qlinearized2} on the domain $\Omega$.}
\end{lemma}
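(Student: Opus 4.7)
The plan is to mimic the $\sigma \to 0$ argument used in Lemma \ref{exist2}, now applied to the singular perturbation problem \eqref{au2} on the enlarged domain $\Omega_2$. First I would establish the existence of a smooth solution $\Psi^{\sigma}$ to \eqref{au2} for each fixed $\sigma>0$ by exactly the Galerkin/Fourier expansion scheme of Lemma \ref{exist1}: expand $\Psi^{N,\sigma}=\sum_{j=1}^N A_j^{N,\sigma}(x_1)b_j(x_2)$ in the eigenbasis \eqref{eigenfunction}, reduce to a $3N\times 3N$ linear ODE boundary value problem, observe that the energy estimate \eqref{aH1} gives uniqueness of the approximate problem and hence solvability of the linear algebraic system for the boundary parameters, and send $N\to\infty$ using the uniform bound \eqref{aH21}.

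Next I would pass to the limit $\sigma\to 0^+$. By Lemma \ref{aH2}, $\{\Psi^{\sigma}\}$ is uniformly bounded in $H^{2}(\Omega_2)$, so along a subsequence it converges weakly in $H^{2}(\Omega_2)$ and strongly in $H^{1}(\Omega_2)$ to some $\Psi\in H^{2}(\Omega_2)$. Strong $H^{1}$ convergence preserves the boundary condition $\Psi(L_0,x_2)=0$ and the Neumann data $\p_{x_2}\Psi(x_1,\pm1)=0$, while the trace of $\p_{x_1}\Psi^{\sigma}$ at $x_1=L_2$ converges weakly in $L^{2}$, so $\p_{x_1}\Psi(L_2,x_2)=0$ also persists. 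For the equation itself, test \eqref{au2} against any $\chi\in C_c^{\infty}(\Omega_2)$ and observe that
\[
\Big|\sigma\iint_{\Omega_2}\p_{x_1}^3\Psi^{\sigma}\,\chi\,dx\Big|=\Big|\sigma\iint_{\Omega_2}\p_{x_1}^2\Psi^{\sigma}\,\p_{x_1}\chi\,dx\Big|\le\sqrt{\sigma}\,\bigl(\sigma\|\p_{x_1}^2\Psi^{\sigma}\|_{L^2}^2\bigr)^{1/2}\|\p_{x_1}\chi\|_{L^2}\to 0
\]
by \eqref{aH1}, so the dissipative term drops out while the remaining terms pass to the limit by weak $H^2$ convergence. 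Thus $\mathcal{M}\Psi=F_0$ holds in the sense of distributions, and since $\Psi\in H^{2}(\Omega_2)$ it holds a.e.; the estimate \eqref{au100} follows from \eqref{aH21} together with weak lower semicontinuity of norms. Uniqueness is immediate: the difference of two such $H^2$ strong solutions solves \eqref{au1} with $F_0\equiv0$, and applying the multiplier $d(x_1)\p_{x_1}(\cdot)$ as in Lemma \ref{qH1estimate} (all three boundary contributions at $L_0, L_2, \pm 1$ have the right sign thanks to \eqref{apos3}--\eqref{apos4} and the Dirichlet/Neumann conditions in \eqref{au1}) forces it to vanish.

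To finish, I would identify $\Psi|_{\Omega}$ with the solution $\psi$ of \eqref{qlinearized2} constructed in Lemma \ref{exist2}. By the definitions of the extension operator $\mathcal{E}$ and of $\bar{a}_{11}, \bar{a}_1$, the coefficients $a_{ij}, a_1$ and the source $F_0$ coincide with $k_{ij}, k_1$ and $G_0$ on $\Omega$, so $\Psi|_{\Omega}$ is an $H^{2}$ strong solution of the equation in \eqref{qlinearized2} and inherits the boundary conditions $\Psi|_{x_1=L_0}=0$ and $\p_{x_2}\Psi|_{x_2=\pm 1}=0$ from its behavior on $\Omega_2$. Since Lemma \ref{exist2} gives a unique $H^{2}$ strong solution to \eqref{qlinearized2} (the uniqueness being a consequence of Lemma \ref{qH1estimate}, which uses no boundary data at $x_1=L_1$), we conclude $\psi=\Psi|_{\Omega}$.

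The only delicate point I anticipate is the passage to the limit of the singular term $\sigma\p_{x_1}^{3}\Psi^{\sigma}$ and the preservation of the boundary condition $\p_{x_1}\Psi(L_2,x_2)=0$; both are handled by the quantitative smallness $\sqrt{\sigma}\|\p_{x_1}^2\Psi^{\sigma}\|_{L^2}=O(1)$ provided by Lemma \ref{aH2}, together with the continuity of the trace operator from $H^{2}(\Omega_2)$ into $H^{1/2}$ of the boundary. Everything else is parallel to the arguments already carried out for \eqref{qap}--\eqref{qlinearized2} in Lemmas \ref{qH1-appro}--\ref{exist2}.
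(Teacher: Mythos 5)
Your proof is correct and follows essentially the same route as the paper: Galerkin construction for each fixed $\sigma>0$, passage to the limit $\sigma\to 0^+$ via the uniform bounds of Lemma \ref{aH2}, and identification of $\Psi|_{\Omega}$ with $\psi$ by the uniqueness afforded by the energy estimate (the paper phrases this last step as showing $v=\Psi-\psi$ solves a homogeneous problem on $\Omega$ with vanishing data at $x_1=L_0$, $x_2=\pm 1$ and concludes $\nabla v\equiv 0$, which is the same uniqueness argument you invoke). Your additional care about the distributional passage to the limit and the trace at $x_1=L_2$ fleshes out details the paper leaves implicit but does not constitute a different approach.
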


\begin{proof}
With the estimates \eqref{aH1}-\eqref{aH21} at hand, one can prove the existence and uniqueness of the strong $H^2$ solution $\Psi^{\sigma}$ to \eqref{au2} by the finite Fourier series approximation as in Lemma \ref{exist2}. Since the estimates \eqref{aH1}-\eqref{aH21} are uniformly in $\sigma$, one can further extract a subsequence $\{\Psi^{\sigma_j}\}_{j=1}^{\infty}$ which converges weakly to $\Psi$ in $H^2(\Omega_2)$ as $\sigma_j\to 0$. This function $\Psi$ satisfies the estimate \eqref{au100} and solves the problem \eqref{au2}.

Let $v=\Psi-\psi$. Then $v\in H^2(\Omega)$ satisfies
\be\no\begin{cases}
\sum_{i,j=1}^2 k_{ij} \p_{x_i x_j}^2 v + k_1\p_{x_1} v=0,\ \ & (x_1,x_2)\in \Omega,\\
v(L_0,x_2)=0,\ \ \ & x_2\in (-1,1),\\
\p_{x_2}v(x_1,\pm 1)=0,\ \ & x_1\in (L_0,L_1).
\end{cases}\ee
Then an energy estimate as in Lemma \ref{qH1estimate} yields that $\iint_{\Omega}|\nabla v|^2 dx_1 dx_2=0$ and thus $\nabla v\equiv 0$. Since $v(L_0,x_2)=0$, one has $v(x_1,x_2)\equiv 0$ on $\Omega$. Then Lemma \ref{equality} is proved.
\end{proof}

\begin{lemma}({\bf Interior $H^3$ estimate.})\label{aH3}
{\it There exists a constant $\delta_*>0$ depending only on the background flow, such that if $0<\delta_0\leq \delta_*$ in \eqref{ae}, the classical solution to
\eqref{au2} satisfies
\be\label{aH31}
\sigma\int_{\frac{1}{2}L_0}^{L_1+14\ell}\int_{-1}^1 |\p_{x_1}^4\Psi^{\sigma}|^2 dx_2 dx_1+ \int_{\frac{1}{2}L_0}^{L_1+14\ell}\int_{-1}^1 |\nabla\p_{x_1}^2\Psi^{\sigma}|^2 dx_2 dx_1 \leq C_{\sharp}\|F_0\|_{H^2(\Omega_2)}^2,
\ee
where the constant $C_{\sharp}$ depends only on the $C^3(\overline{\Omega_2})$ norms of the coefficients $a_{11}, a_{12}$ and $a_1$.
}\end{lemma}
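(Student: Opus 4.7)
The plan is to bootstrap from the $H^2$ bound of Lemma \ref{aH2} by differentiating the equation in \eqref{au2} twice in $x_1$, deriving a PDE for $W_2:=\p_{x_1}^2\Psi^{\sigma}$, and then reusing the multiplier $\eta_5^2 d(x_1)\p_{x_1}W_2$ (an $x_1$-localized version of the one employed in Lemmas \ref{qH1-appro} and \ref{aH2}) to extract the desired bound for $W_2$ on the interior strip $(\tfrac{1}{2}L_0, L_1+14\ell)\times(-1,1)$. The sign conditions \eqref{apos3}--\eqref{apos4} survive under small perturbations, which is what ultimately drives the coercivity of the resulting quadratic form.

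First I would compute the equation for $W_2$: applying $\p_{x_1}^2$ to $\mathcal{M}^{\sigma}\Psi^{\sigma}=F_0$ and absorbing zeroth-order contributions into the source yields
\be\no
\sigma\p_{x_1}^3 W_2 + \sum_{i,j=1}^2 a_{ij}\p_{x_ix_j}^2 W_2 + (a_1 + 2\p_{x_1}a_{11})\p_{x_1}W_2 + 4(\p_{x_1}a_{12})\p_{x_2}W_2 = F_2,
\ee
where $F_2$ collects $\p_{x_1}^2 F_0$ together with commutator terms involving $\p_{x_1}^k a_{ij}$ and $\p_{x_1}^k a_1$ for $k\le 2$ multiplied by derivatives of $\Psi^{\sigma}$ of order at most two. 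By Lemma \ref{aH2} and the pointwise $C^3$ control on the coefficients, $\|F_2\|_{L^2(\Omega_2)}\le C_{\sharp}\|F_0\|_{H^2(\Omega_2)}$.

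Next, I would pick a cutoff $\eta_5\in C^{\infty}([L_0,L_2])$ with $0\le\eta_5\le 1$, $\eta_5\equiv 1$ on $[\tfrac{1}{2}L_0, L_1+14\ell]$ and $\eta_5\equiv 0$ outside $[\tfrac{3}{4}L_0, L_1+16\ell]$, multiply the $W_2$-equation by $\eta_5^2 d(x_1)\p_{x_1}W_2$, and integrate over $\Omega_2$. An integration-by-parts computation parallel to the one producing \eqref{aH24} yields
\be\no
\sigma\iint_{\Omega_2}\eta_5^2(-d)(\p_{x_1}^2 W_2)^2 dx+\iint_{\Omega_2}\eta_5^2\b(Q_1(\p_{x_1}W_2)^2+Q_2(\p_{x_2}W_2)^2\b)dx \le \text{R.H.S.},
\ee
with coefficients $Q_1,Q_2\ge 2$ obtained from the perturbed version of \eqref{apos3}--\eqref{apos4} (valid for $\delta_0\le\delta_\ast$) combined with $d'\equiv 6>0$. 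Boundary integrals at $x_1=L_0, L_2$ vanish through $\eta_5$, while those at $x_2=\pm 1$ vanish because $a_{12}(x_1,\pm 1)=0$ and $\p_{x_2}W_2(x_1,\pm 1)=\p_{x_1}^2\p_{x_2}\Psi^{\sigma}(x_1,\pm 1)=0$, the latter inherited from $\p_{x_2}\Psi^{\sigma}(x_1,\pm 1)=0$. The right-hand side splits into $\iint_{\Omega_2}\eta_5^2 d F_2\p_{x_1}W_2$, absorbed by Cauchy--Schwarz, and the leak integrals supported on $\{\eta_5'\neq 0\}$, which involve $|\nabla W_2|^2$ on $[\tfrac{3}{4}L_0,\tfrac{1}{2}L_0]\cup[L_1+14\ell, L_1+16\ell]$. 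The first leak interval lies in the strictly elliptic region $a_{11}\ge\kappa_*$, and the second lies inside the extended elliptic zone $[L_1+4\ell,L_2]$ where $\bar{a}_{11}\equiv 1$, $\bar{a}_1\equiv -k_0$, so standard interior elliptic regularity (mirroring Lemma \ref{qH3}) controls $\|\nabla W_2\|_{L^2}$ on each by $C_{\sharp}\|F_0\|_{H^2(\Omega_2)}$. Combining everything produces \eqref{aH31}.

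The main obstacle will be the careful bookkeeping of the commutator and cross-terms. The most delicate piece is the $2a_{12}\p_{x_1x_2}^2 W_2\cdot\eta_5^2 d\,\p_{x_1}W_2$ contribution, whose integration by parts in $x_2$ requires exactly $a_{12}(x_1,\pm 1)=0$ to eliminate the boundary integral; meanwhile, the zeroth-order terms generated by differentiating $a_{ij}\p_{x_ix_j}^2\Psi^{\sigma}$ twice pick up pointwise factors of $\p_{x_1}^2 a_{ij}$, which is precisely why $C_{\sharp}$ must depend on the $C^3(\overline{\Omega_2})$ norms of the coefficients rather than only on their $H^3$ norms.
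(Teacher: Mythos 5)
Your overall strategy is the same one the paper uses: pass to the equation for $W_2=\p_{x_1}^2\Psi^{\sigma}$, test with a localized multiplier $\eta^2 d\,\p_{x_1}W_2$, use the perturbed versions of \eqref{apos3}--\eqref{apos4} for coercivity, and absorb the leak terms coming from the support of $\eta'$. The gap is in the treatment of those leak terms. You invoke ``standard interior elliptic regularity (mirroring Lemma \ref{qH3})'' to bound $\|\nabla W_2\|_{L^2}$ on the strips where $\eta'\neq 0$, but \eqref{au2} is not a second-order elliptic equation there: it contains the singular term $\sigma\p_{x_1}^3\Psi^{\sigma}$. If you move that term to the source and apply second-order elliptic estimates, you need $\sigma\|\p_{x_1}^3\Psi^{\sigma}\|_{H^1(\text{leak strip})}$, which is not available; Lemma \ref{aH2} only controls $\sigma\|\p_{x_1}^3\Psi^{\sigma}\|_{L^2}^2$, with the $\sigma$-weight inside. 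So the elliptic-regularity shortcut does not close uniformly in $\sigma$, and that uniformity is exactly what the weak-limit argument in Lemma \ref{H4first} needs.

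What the paper does instead is a dedicated preliminary energy estimate on those thin strips. It multiplies the $W_1$-equation \eqref{aw1} (not the $W_2$-equation) by $\eta_j^2\p_{x_1}^2W_1$ with cutoffs $\eta_5,\eta_6$ supported on narrow bands at the two ends of your target interval. Integration by parts produces a leading term $\iint \eta_j^2\bigl(a_{11}(\p_{x_1}^2W_1)^2+(\p_{x_1x_2}^2W_1)^2\bigr)$, which is coercive because $a_{11}\geq\kappa_*$ on each band, and a $\sigma$-error $\sigma\iint\eta_j\eta_j'(\p_{x_1}^2W_1)^2=\sigma\iint\eta_j\eta_j'(\p_{x_1}^3\Psi^{\sigma})^2$, which is controlled precisely by the $\sigma$-weighted bound \eqref{aH21} from Lemma \ref{aH2}. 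This yields $\|\nabla W_2\|_{L^2}$ on the two bands uniformly in $\sigma$ (estimate \eqref{aH32}), and only then is the multiplier identity for $W_2$ invoked, with a cutoff $\eta_7$ supported inside the union of these bands so the leak integrals land where \eqref{aH32} applies. Your proof would be complete if you replaced the elliptic-regularity appeal with this energy estimate on the leak strips; the rest of your bookkeeping (the $a_{12}(x_1,\pm1)=0$ cancellation, the inherited $\p_{x_1}^2\p_{x_2}\Psi^{\sigma}(x_1,\pm1)=0$ condition, and the reason $C_{\sharp}$ must depend on the $C^3$ rather than $H^3$ norms) is correct.
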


\begin{proof}

Define smooth cutoff functions $0\leq \eta_j(x_1)\leq 1$ on $[L_0,L_2]$ for $j=5,6$ such that
\be\no
\eta_5(x_1)=\begin{cases}
0,\ \ \ & L_0\leq x_1\leq \frac{3}{4}L_0,\\
1,\ \ \ & \frac{5}{8}L_0\leq x_1\leq \frac{1}{2}L_0,\\
0,\ \ \ & \frac{3}{8}L_0\leq x_1\leq L_2,
\end{cases}\ \ \ \ \eta_6(x_1)=\begin{cases}
0,\ \ \ & L_0\leq x_1\leq L_1+13\ell,\\
1,\ \ \ & L_1+14\ell\leq x_1\leq L_1+15\ell,\\
0,\ \ \ & L_1+16\ell\leq x_1\leq L_2.
\end{cases}
\ee

Multiplying the equation \eqref{aw1} by $\eta_j^2 \p_{x_1}^2 W_1$ for $j=5,6$ respectively and integrating by parts yield that
\be\no
&&\iint_{\Omega_2} \eta_j^2 (a_{11}(\p_{x_1}^2 W_1)^2 +(\p_{x_1x_2}^2 W_1)^2) =\sigma \iint_{\Omega_2}\eta_j \eta_j' (\p_{x_1}^2 W_1)^2 dx_1 dx_2-\iint_{\Omega_2} 2 \eta_j^2
a_{12} \p_{x_1x_2}^2 W_1\p_{x_1}^2 W_1 \\\no
&& -\iint_{\Omega_2} 2 \eta_j \eta_j'  \p_{x_1x_2}^2W_1 \p_{x_2}W_1 dx_1 dx_2 +\iint_{\Omega_2}\eta_j^2(F_1 - a_3\p_{x_1} W_1-a_4\p_{x_2}W_1) \p_{x_1}^2 W_1 dx_1 dx_2.
\ee
Since the supports of $\eta_j'(x_1)$ are contained in $[\frac{3}{4}L_0, L_1+16\ell]$ for all $j=5,6$, one could use \eqref{aH21} to control the first term on the right hand side. Then it holds that
\be\no
&\quad&\int_{\frac{5}{8}L_0}^{\frac{1}{2}L_0} \int_{-1}^1 |\nabla \p_{x_1}W_1|^2 dx_2 dx_1+ \int_{L_1+14\ell}^{L_1+15\ell}\int_{-1}^1 |\nabla\p_{x_1}W_1|^2 dx_2 dx_1\\\no
&\leq& C_*\sigma \iint_{\Omega_2}|\eta_5'|+|\eta_6'|)|\p_{x_1}^3\Psi|^2 dx_1 dx_2+ C_*\iint_{\Omega_2}|\nabla W_1|^2 +F_1^2 dx_1 dx_2\\\label{aH32}
&\leq& C_*\iint_{\Omega_2}F_0^2+|\nabla F_0|^2 dx_1 dx_2.
\ee

Set $W_2=\p_{x_1}W_1$. Then $W_2$ solves
\be\label{aw2}\begin{cases}
\sigma\p_{x_1}^3 W_2+\displaystyle\sum_{i,j=1}^2 a_{ij}\p_{x_i x_j}^2 W_2 + a_5 \p_{x_1} W_2 + a_6 \p_{x_2} W_2=F_2,\ \ &\text{on }\ \ \Omega_2,\\
W_2(L_0,x_2)=0,\ \ \ &\text{on }\ \ (-1,1),\\
\p_{x_2} W_2(x_1,\pm 1)=0,\ \ \ &\text{on }\ \ [L_0,L_2],
\end{cases}\ee
where
\be\no
&&a_5= a_3 + \p_{x_1} a_{11}= a_1 + 2\p_{x_1} a_{11},\ \ \ \ a_6=a_4+ 2 \p_{x_1} a_{12}=4 \p_{x_1} a_{12},\\\no
&&F_2=\p_{x_1}^2 F_0-(2\p_{x_1}a_1+\p_{x_1}^2 a_{11}) W_2-2\p_{x_1}^2 a_{12}\p_{x_2} W_1- \p_{x_1}^2 a_1 W_1.
\ee
Since the coefficient $2\p_{x_1}a_1+\p_{x_1}^2 a_{11}$ of $W_2$ in $F_2$ may change its sign in general, it seems difficult to get an estimate if one puts the term $-(2\p_{x_1}a_1+\p_{x_1}^2 a_{11}) W_2$ on the left hand side of the equation in \eqref{aw2}. Thus we just regard it as a source term.

Define a smooth cutoff function $0\leq \eta_7(x_1)\leq 1$ on $[L_0,L_2]$ such that
\be\no
\eta_7(x_1)=\begin{cases}
0,\ \ \ & L_0\leq x_1\leq \frac{5}{8}L_0,\\
1,\ \ \ & \frac{1}{2}L_0\leq x_1\leq L_1+14\ell,\\
0,\ \ \ & L_1+15\ell\leq x_1\leq L_2.
\end{cases}
\ee

Multiplying the equation \eqref{aw2} by $\eta_7^2 d(x_1) \p_{x_1} W_2$ and integrating over $\Omega_2$ yield
\be\no
&&-\sigma \iint_{\Omega_2}\eta_7^2 d(x_1) (\p_{x_1}^2 W_2)^2 dx+ \iint_{\Omega_2}[\eta_7^2d a_5 -\frac{1}{2}\p_{x_1}(\eta_7^2d a_{11})-\eta_7^2d \p_{x_2} a_{12}](\p_{x_1} W_2)^2 dx\\\label{aw200}
&&+\iint_{\Omega_2} [\frac{\eta_7^2}{2}d'(x_1)+\eta_7\eta_7' d ](\p_{x_2} W_2)^2+ \eta_7^2  a_6 d \p_{x_1} W_2 \p_{x_2} W_2  dx_1
dx_2\\\no
&&\quad= \iint_{\Omega_2} \eta_7^2 d \p_{x_1} W_2 F_2 dx_1 dx_2+ \sigma \iint_{\Omega} (\eta_7^2 d)'(x_1)\p_{x_1} W_2 \p_{x_1}^2 W_2 dx_1 dx_2.
\ee
By \eqref{apos3}-\eqref{apos4} for $j=2$, it follows from \eqref{ae} that there exists a constant $\delta_*>0$ such that if $0<\delta_0\leq \delta_*$, it holds that
\begin{eqnarray}\nonumber
&&\eta_7^2 a_5 d-\frac12 \p_{x_1}(\eta_7^2 d a_{11})-\eta_7^2 d\partial_{x_2} a_{12}\\\no
&&=\frac{1}{2}\eta_7^2 d(2 a_1+3\p_{x_1} a_{11}-2 \p_{x_2} a_{12})- \frac{1}{2}a_{11}(\eta_7^2 d'+ 2 \eta_7 \eta_7' d)\\\label{e5}
&&\geq \frac12\eta_7^2 \{d(2\bar{a}_1 + 3\bar{a}_{11}')-6\bar{a}_{11}\} -\eta_7^2 d(\|a_1-\bar{a}_{1}\|_{L^{\infty}}+\frac32\|\p_{x_1} a_{11}-\bar{a}_{11}'\|_{L^{\infty}}+\|\partial_{x_2} a_{12}\|_{L^{\infty}})\\\no
&&\quad\quad-3\eta_7^2\|a_{11}-\bar{a}_{11}\|_{L^{\infty}} - \eta_7 \eta_7'a_{11} d\geq 3\eta_7^2- \eta_7 \eta_7' a_{11}d, \ \ \forall (x_1,x_2)\in \Omega_2,\\\no
&&|\eta_7^2 da_6|\leq C_*\delta_0\eta_7^2\leq C_* \delta_*\eta_7^2,
\end{eqnarray}
due to the Sobolev embedding $H^3(\Omega_2)\subset C^{1,\alpha}(\overline{\Omega_2})$ with $\alpha\in(0,1)$.

Then one can conclude from \eqref{aw200} that
\be\no
&&\sigma \iint_{\Omega_2}\eta_7^2 (\p_{x_1}^2 W_2)^2 dx_1 dx_2+ \iint_{\Omega_2}\eta_7^2 |\nabla W_2|^2 dx_1 dx_2\\\label{aH33}
&&\leq C_*\iint_{\Omega_2}\eta_7^2F_2^2+|\eta_7'(x_1)|^2 |\nabla W_2|^2 dx_1 dx_2.
\ee
Since the support of $\eta_7'(x_1)$ is contained in $[\frac{5}{8}L_0,\frac{3}{8}L_0]\cup [L_1+14\ell, L_1+15\ell]$, one can use \eqref{aH32} to control the term $\iint_{\Omega_2}|\eta_7'(x_1)|^2 |\nabla W_2|^2 dx_1 dx_2 $. Note also that
\be\no
&&\|\eta_7F_2\|_{L^2}\leq C_*\|F_0\|_{H^2}+\|2\p_{x_1} a_1+\p_{x_1}^2a_{11}\|_{L^{\infty}}\|\eta_7 W_2\|_{L^2}+\|\p_{x_1}^2 a_{12}\|_{L^{\infty}}\|\eta_7\p_{x_2} W_1\|_{L^2}+\|\p_{x_1}^2 a_1 \|_{L^4}\|W_1\|_{L^4}\\\no
&&\leq C_*\|F_0\|_{H^2(\Omega_2)}+C_{\sharp}\|F_0\|_{H^1}+\|a_1\|_{H^3}\|\p_{x_1}\Psi\|_{H^1}\leq C_{\sharp}\|F_0\|_{H^2}.
\ee
Then the estimate \eqref{aH31} follows from these and \eqref{aH33}.

\end{proof}

\begin{lemma}({\bf Interior $H^4$ estimate.})\label{aH4}
{\it There exists a constant $\delta_*>0$ depending only on the background flow, such that if $0<\delta_0\leq \delta_*$ in \eqref{ae}, the classical solution $\Psi^{\sigma}$ to
\eqref{au2} satisfies
\be\label{aH41}
\sigma\int_{\frac{1}{4}L_0}^{L_1+12\ell}\int_{-1}^1 |\p_{x_1}^5\Psi^{\sigma}|^2 dx_2 dx_1+ \int_{\frac{1}{4}L_0}^{L_1+12\ell}\int_{-1}^1 |\nabla\p_{x_1}^3\Psi^{\sigma}|^2 dx_2 dx_1 \leq C_{\sharp}\|F_0\|_{H^3(\Omega_2)}^2,
\ee
where the constant $C_{\sharp}$ depends only on the $C^3(\overline{\Omega_2})$ norms of the coefficients $a_{11}, a_{12}$ and $a_1$.
}\end{lemma}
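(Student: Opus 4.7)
The plan is to carry out a higher-order version of the argument of Lemma~\ref{aH3}, differentiating the singular equation one more time in $x_1$. First I set $W_3=\p_{x_1}^3\Psi^{\sigma}=\p_{x_1}W_2$ and apply $\p_{x_1}$ to equation~\eqref{aw2}, obtaining an equation of the same structural form as \eqref{aw2}:
\be\no
\sigma\p_{x_1}^3W_3+a_{11}\p_{x_1}^2W_3+2a_{12}\p_{x_1x_2}^2W_3+\p_{x_2}^2W_3+a_7\p_{x_1}W_3+a_8\p_{x_2}W_3=F_3,
\ee
with $a_7=a_1+3\p_{x_1}a_{11}$, $a_8=6\p_{x_1}a_{12}$, and $F_3=\p_{x_1}F_2-\p_{x_1}a_5\,W_2-\p_{x_1}a_6\,\p_{x_2}W_2$. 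Note that $F_3$ involves at most third-order derivatives of $a_{11},a_{12},a_1$ and of $F_0$, which exactly matches the assumed $C^3(\overline{\Omega_2})$-regularity of the coefficients and the $H^3(\Omega_2)$-regularity of $F_0$. The boundary condition $\p_{x_2}W_3(x_1,\pm 1)=0$ still holds, since $\p_{x_2}\Psi^{\sigma}\equiv 0$ on $\{x_2=\pm 1\}$ implies $\p_{x_1}^{k}\p_{x_2}\Psi^\sigma=0$ there for all $k$.

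Next I introduce a cutoff $\eta_{10}\in C^\infty([L_0,L_2])$, $0\le\eta_{10}\le 1$, identically $1$ on $[\tfrac{1}{4}L_0,L_1+12\ell]$ and supported in $[\tfrac{3}{8}L_0,L_1+13\ell]$. Crucially, $\mathrm{supp}(\eta_{10}')$ lies inside $[\tfrac{1}{2}L_0,L_1+14\ell]$, the region where the conclusion of Lemma~\ref{aH3} is available. Multiplying the $W_3$-equation by $\eta_{10}^2d(x_1)\p_{x_1}W_3$ and integrating by parts over $\Omega_2$ (the boundary contributions at $x_1=L_0,L_2$ vanish by the support of $\eta_{10}$; the $x_2=\pm 1$ contributions vanish since $\p_{x_2}W_3=0$ and $a_{12}=0$ there), the principal combination on the left becomes
\be\no
\frac12\eta_{10}^2 d(2a_7-\p_{x_1}a_{11})-\frac12 a_{11}(\eta_{10}^2 d)'=\frac12\eta_{10}^2 d\big(2a_1+5\p_{x_1}a_{11}\big)-\frac12 a_{11}(\eta_{10}^2 d)',
\ee
and the quantity $2\bar{a}_1+5\bar{a}_{11}'$ is precisely the case $j=3$ of \eqref{apos3}. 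Together with \eqref{apos4} for $j=3$, the smallness assumption \eqref{ae}, and the Sobolev embedding $H^3(\Omega_2)\hookrightarrow C^{1,\alpha}(\overline{\Omega_2})$, this produces a lower bound $\ge 3\eta_{10}^2-\eta_{10}\eta_{10}'a_{11}d$ for the $(\p_{x_1}W_3)^2$-coefficient and $\ge 3\eta_{10}^2+\eta_{10}\eta_{10}'d$ for the $(\p_{x_2}W_3)^2$-coefficient, in direct analogy with \eqref{e5}, while $|\eta_{10}^2 d\,a_8|\le C_\sharp\delta_0\eta_{10}^2$ remains a harmless perturbation.

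The right-hand side of the resulting identity is then controlled as follows. The source contribution $\iint\eta_{10}^2 d\p_{x_1}W_3\cdot F_3\,dx$ is absorbed via Young's inequality once $\|\eta_{10}F_3\|_{L^2}\le C_\sharp\|F_0\|_{H^3(\Omega_2)}$; the $\|\p_{x_1}F_2\|_{L^2}$-part uses $\|F_0\|_{H^3}$ together with the $L^\infty$-bounds on $\p_{x_1}^k a_{11},\p_{x_1}^k a_{12},\p_{x_1}^k a_1$ for $k\le 3$ (coming from the $C^3$-regularity) multiplied by $\|W_1,W_2,\nabla W_1\|_{L^2}$ from Lemmas~\ref{aH2} and~\ref{aH3}. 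The remaining commutator $\sigma\iint(\eta_{10}^2 d)'\p_{x_1}W_3\p_{x_1}^2 W_3\,dx$ is split by Leibniz: the $\eta_{10}^2 d'$-part is absorbed into the positive term $-\sigma\iint\eta_{10}^2 d(\p_{x_1}^2W_3)^2\,dx$ via Young's inequality, while the $2\eta_{10}\eta_{10}'d$-part is integrated once by parts in $x_1$ to become $-\sigma\iint(\eta_{10}\eta_{10}'d)'(\p_{x_1}W_3)^2\,dx$, supported in $\mathrm{supp}(\eta_{10}')$ and bounded by $\|\nabla W_3\|_{L^2(\mathrm{supp}\,\eta_{10}')}^2=\|\p_{x_1}\nabla W_2\|_{L^2(\mathrm{supp}\,\eta_{10}')}^2$, which is exactly the bound already supplied by \eqref{aH31}.

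I expect the main obstacle to be bookkeeping rather than any new idea: one must verify that after a third $x_1$-differentiation of the singular-perturbation equation the principal coefficient $a_7$ still hits the favorable inequality \eqref{apos3}--\eqref{apos4} with $j=3$, and confirm that every commutator error — most delicately $\p_{x_1}a_5\cdot W_2$ and $\p_{x_1}a_6\cdot\p_{x_2}W_2$ hidden inside $F_3$, as well as the cutoff-derivative terms in $\mathrm{supp}(\eta_{10}')$ — lands either in a piece that is absorbed on the left or in a source pool controlled by the $C^3$-norms of the coefficients, $\|F_0\|_{H^3(\Omega_2)}$, and the prior interior $H^1$/$H^2$-estimates of $W_1,W_2$ on the larger strip $[\tfrac{1}{2}L_0,L_1+14\ell]$ from Lemma~\ref{aH3}. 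In particular, it is essential to keep $a_7\p_{x_1}W_3$ as a coefficient (not a source), so that the precise combination $2a_1+5\p_{x_1}a_{11}$ emerges after integration by parts against $a_{11}\p_{x_1}^2 W_3$.
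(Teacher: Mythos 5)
Your overall strategy follows the paper's: differentiate \eqref{aw2} once more in $x_1$, work with $W_3=\p_{x_1}^3\Psi^\sigma$, cut off with $\eta_{10}$, multiply by $\eta_{10}^2 d\,\p_{x_1}W_3$, and appeal to \eqref{apos3}--\eqref{apos4} with $j=3$. But there is a genuine gap in the way you control the cutoff-derivative terms coming from integration by parts against the principal operator. The energy identity for the $W_3$-equation produces commutator terms such as $\eta_{10}\eta_{10}' d\,a_{11}(\p_{x_1}W_3)^2$ and $\eta_{10}\eta_{10}' d\,(\p_{x_2}W_3)^2$, supported in $\mathrm{supp}(\eta_{10}')\subset[\tfrac{3}{8}L_0,\tfrac{1}{4}L_0]\cup[L_1+12\ell,L_1+13\ell]$. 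Controlling these requires an \emph{unweighted} bound on $\|\nabla W_3\|_{L^2(\mathrm{supp}\,\eta_{10}')}^2$. You claim this is supplied by \eqref{aH31}, but that estimate only gives $\|\nabla W_2\|_{L^2([\frac12 L_0,L_1+14\ell]\times[-1,1])}^2$ (one order too low) plus the $\sigma$-weighted quantity $\sigma\|\p_{x_1}W_3\|_{L^2}^2$; neither controls the unweighted $\|\nabla W_3\|_{L^2(\mathrm{supp}\,\eta_{10}')}^2$ you need. The paper closes exactly this gap with a preliminary step you have omitted: it introduces two extra cutoffs $\eta_8,\eta_9$ supported precisely near the ends of $\mathrm{supp}(\eta_{10}')$, multiplies \eqref{aw2} by $\eta_j^2\p_{x_1}^2 W_2$ for $j=8,9$, and, using the $\sigma$-weighted part of \eqref{aH31} to dominate the resulting $\sigma\eta_j\eta_j'(\p_{x_1}^2W_2)^2$ boundary-layer terms (whose supports sit inside $[\tfrac12 L_0,L_1+14\ell]$), derives the intermediate estimate \eqref{aH42} on $\|\nabla\p_{x_1}W_2\|_{L^2}^2=\|\nabla W_3\|_{L^2}^2$ on $[\tfrac{3}{8}L_0,\tfrac{1}{4}L_0]$ and $[L_1+12\ell,L_1+13\ell]$. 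Only after this does the $\eta_{10}$-weighted estimate for $W_3$ close. Without this preliminary local elliptic-type estimate, the main identity has an uncontrolled term.

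There is also a minor bookkeeping slip in your formula for the source: differentiating $a_5\p_{x_1}W_2$ with respect to $x_1$ gives $a_5\p_{x_1}W_3+(\p_{x_1}a_5)W_3$, so the correct $F_3$ contains $(\p_{x_1}a_5)W_3$ (not $(\p_{x_1}a_5)W_2$ as you wrote). This does not change the order of regularity needed from the coefficients, but is worth correcting since the $W_3$-weight is what allows this term to be absorbed by the positive part of the identity or treated by the already-established $\|W_2\|_{H^1}$ bound (recall $W_3=\p_{x_1}W_2$, so $\|\eta_{10}W_3\|_{L^2}$ is controlled by \eqref{aH31}).
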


\begin{proof}

Define smooth cutoff functions $0\leq \eta_j(x_1)\leq 1$ on $[L_0,L_2]$ for $j=8,9$ such that
\be\no
\eta_8(x_1)=\begin{cases}
0,\ \ \ & L_0\leq x_1\leq \frac{1}{2}L_0,\\
1,\ \ \ & \frac{3}{8}L_0\leq x_1\leq \frac{1}{4}L_0,\\
0,\ \ \ & \frac{1}{8}L_0\leq x_1\leq L_2,
\end{cases}\ \ \ \ \eta_9(x_1)=\begin{cases}
0,\ \ \ & L_0\leq x_1\leq L_1+11\ell,\\
1,\ \ \ & L_1+12\ell\leq x_1\leq L_1+13\ell,\\
0,\ \ \ & L_1+14\ell\leq x_1\leq L_2.
\end{cases}
\ee

Multiplying the equation \eqref{aw2} by $\eta_j^2 \p_{x_1}^2 W_2$ for $j=8,9$ respectively and integrating by parts yield that
\be\no
&&\iint_{\Omega_2} \eta_j^2 (a_{11}(\p_{x_1}^2 W_2)^2 +(\p_{x_1x_2}^2 W_2)^2) =\sigma \iint_{\Omega_2}\eta_j \eta_j' (\p_{x_1}^2 W_2)^2 dx_1 dx_2-\iint_{\Omega_2} 2 \eta_j^2
a_{12} \p_{x_1x_2}^2 W_2\p_{x_1}^2 W_2 \\\no
&& -\iint_{\Omega_2} 2 \eta_j \eta_j'  \p_{x_1x_2}^2W_2 \p_{x_2}W_2 dx_1 dx_2 +\iint_{\Omega_2}\eta_j^2(F_2 - a_5\p_{x_1} W_2-a_6\p_{x_2}W_2) \p_{x_1}^2 W_2 dx_1 dx_2.
\ee
Since the supports of $\eta_j'(x_1)$ are contained in $[\frac{1}{2}L_0, L_1+14\ell]$, one could use \eqref{aH31} to control the first term on the right hand side above to get
\be\label{aH42}
&&\int_{\frac{3}{8}L_0}^{\frac{1}{4}L_0} \int_{-1}^1 |\nabla \p_{x_1}W_2|^2 dx_2 dx_1+ \int_{L_1+12\ell}^{L_1+13\ell}\int_{-1}^1 |\nabla\p_{x_1}W_2|^2 dx_2 dx_1\\\no
&&\leq C_{\sharp}\|F_0\|_{H^2}^2+ C_*\iint_{\Omega_2}\eta_j^2(|\nabla W_2|^2 +F_2^2) dx_1 dx_2\leq C_{\sharp}\|F_0\|_{H^2}^2.
\ee

Set $W_3=\p_{x_1}W_2$. Then $W_3$ solves
\be\label{aw3}\begin{cases}
\sigma\p_{x_1}^3 W_3+\displaystyle\sum_{i,j=1}^2 a_{ij}\p_{x_i x_j}^2 W_3 + a_7 \p_{x_1} W_3 + a_8 \p_{x_2} W_3=F_3,\ \ &\text{on }\ \ \Omega_2,\\
\p_{x_2} W_3(x_1,\pm 1)=0,\ \ \ &\text{on }\ \ \ x_1\in [L_0,L_2],
\end{cases}\ee
where
\be\no
&&a_7= a_5 + \p_{x_1} a_{11}= a_1 + 3\p_{x_1} a_{11},\ \ \ \ a_8=a_6+ 2 \p_{x_1} a_{12}=6 \p_{x_1} a_{12},\\\no
&&F_3= \p_{x_1}^3 F_0-(3\p_{x_1}a_1+\p_{x_1} a_{11}+ 2\p_{x_1}^2 a_{11})W_3-(3\p_{x_1}^2 a_1+\p_{x_1}^3 a_{11}) W_2\\\no
&&\quad\quad\quad\quad-2\p_{x_1}^3 a_{12}\p_{x_2} W_1-\p_{x_1}^3 a_1 W_1-6\p_{x_1}^2a_{12}\p_{x_2}W_2.
\ee
Define a smooth cutoff function $0\leq \eta_{10}(x_1)\leq 1$ on $[L_0,L_2]$ such that
\be\no
\eta_{10}(x_1)=\begin{cases}
0,\ \ \ & L_0\leq x_1\leq \frac{3}{8}L_0,\\
1,\ \ \ & \frac{1}{4}L_0\leq x_1\leq L_1+12\ell,\\
0,\ \ \ & L_1+13\ell\leq x_1\leq L_2.
\end{cases}
\ee
Multiplying the equation \eqref{aw2} by $\eta_{10}^2 d(x_1) \p_{x_1} W_2$ and integrating over $\Omega_2$ show that
\be\no
&&-\sigma \iint_{\Omega_2}\eta_{10}^2 d(x_1) (\p_{x_1}^2 W_3)^2 dx+ \iint_{\Omega_3}[\eta_{10}^2d a_7 -\frac{1}{2}\p_{x_1}(\eta_{10}^2d a_{11})-\eta_{10}^2d \p_{x_2} a_{12}](\p_{x_1} W_3)^2 dx\\\no
&&+\iint_{\Omega_2} [\frac{\eta_{10}^2}{2}d'(x_1)+\eta_{10}\eta_{10}' d ](\p_{x_2} W_3)^2+ \eta_{10}^2  a_8 d \p_{x_1} W_3 \p_{x_2} W_3  dx_1
dx_2\\\no
&&\quad= \iint_{\Omega_2} \eta_{10}^2 d \p_{x_1} W_3 F_3 dx_1 dx_2+ \sigma \iint_{\Omega} (\eta_{10}^2 d)'(x_1)\p_{x_1} W_3 \p_{x_1}^2 W_3 dx_1 dx_2.
\ee
It follows from \eqref{apos3}-\eqref{apos4} for $j=3$ that
\be\no
&&\sigma \iint_{\Omega_2}\eta_{10}^2 (\p_{x_1}^2 W_3)^2 dx_1 dx_2+ \iint_{\Omega_2}\eta_{10}^2 |\nabla W_3|^2 dx_1 dx_2\\\no
&&\leq C_*\iint_{\Omega_2}\eta_{10}^2F_3^2+|\eta_{10}'| |\nabla W_3|^2 dx_1 dx_2.
\ee
The term $\iint_{\Omega_2}|\eta_{10}'| |\nabla W_3|^2 dx$ can be controlled by \eqref{aH42}, since the support of $\eta_{10}'(x_1)$ is contained in $[\frac{1}{2}L_0,\frac{3}{8}L_0]\cup [L_1+12\ell, L_1+13\ell]$. And $\|\eta_{10} F_3\|_{L^2}$ can be estimated as
\be\no
&&\|\eta_{10}F_3\|_{L^2}\leq \|F_0\|_{H^3}+\|3\p_{x_1}a_1+\p_{x_1} a_{11}+ 2\p_{x_1}^2 a_{11}\|_{L^{\infty}}\|\eta_{10} W_3\|_{L^2}+ \|3\p_{x_1}^2 a_1+\p_{x_1}^3 a_{11}\|_{L^{\infty}}\|W_2\|_{L^2}\\\no
&&\quad +\|\p_{x_1}^3 a_{12}\|_{L^{\infty}}\|\p_{x_2} W_1\|_{L^2}+\|\p_{x_1}^3 a_{1}\|_{L^{\infty}}\|W_1\|_{L^2}+\|\p_{x_1}^2 a_{12}\|_{L^{\infty}}\|\eta_{10}\p_{x_2} W_2\|_{L^2}\\\no
&&\leq \|\p_{x_1}^3F_0\|_{L^2}+ C_{\sharp} \|F_0\|_{H^2}\leq C_{\sharp} \|F_0\|_{H^3}.
\ee
Here one has used \eqref{aH31} to control $\|\eta_{10} \nabla W_2\|_{L^2}$ since the support of $\eta_{10}$ is contained in $(\frac{3}{8}L_0,L_1+13\ell)$. Thus  \eqref{aH41} follows.

\end{proof}

Now we can improve the regularity of the solution $\psi$ to \eqref{qlinearized2} to be $H^4(\Omega)$.
\begin{lemma}\label{H4first}
{\it The $H^2$ strong solution $\psi$ to \eqref{qlinearized2} belongs to $H^4(\Omega)$ such that
\be\label{aH400}
\|\psi\|_{H^4(\Omega)}\leq C_{\sharp}\|G_0\|_{H^3(\Omega)},
\ee
where the constant $C_{\sharp}$ depends only on the $C^3(\overline{\Omega})$ norms of the coefficients $k_{11}, k_{12}$ and $k_1$.
}\end{lemma}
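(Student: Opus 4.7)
The plan is to patch the subsonic $H^4$ estimate from Lemma \ref{qH3} with the interior fourth-order estimate from Lemma \ref{aH4} for the auxiliary problem, and then to recover the remaining mixed fourth-order derivatives algebraically from the PDE. By Lemma \ref{equality}, the unique strong solution $\psi\in H^2(\Omega)$ to \eqref{qlinearized2} coincides on $\Omega$ with the restriction of the solution $\Psi\in H^2(\Omega_2)$ to the auxiliary problem \eqref{au1}, itself obtained as the weak $H^2$-limit as $\sigma\to 0^+$ of the approximate solutions $\Psi^{\sigma}$ to \eqref{au2}. It therefore suffices to produce a uniform-in-$\sigma$ bound of the form $\|\Psi^{\sigma}\|_{H^4(\Omega)}\leq C_{\sharp}\|F_0\|_{H^3(\Omega_2)}$; the weak lower semicontinuity of the $H^4$ norm will then transfer the estimate to $\Psi$ and hence to $\psi$, while the boundedness of the extension operator $\mathcal{E}:H^3(\Omega)\to H^3(\Omega_2)$ converts $\|F_0\|_{H^3(\Omega_2)}$ into a multiple of $\|G_0\|_{H^3(\Omega)}$.

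On the subsonic strip $(L_0,\frac{1}{4}L_0)\times(-1,1)$, the linearized equation is uniformly elliptic by \eqref{elliptic}, and Lemma \ref{qH3} supplies the full $H^4$ bound there directly for $\psi$. On the complementary strip $[\frac{1}{4}L_0,L_1]\times(-1,1)$, which lies strictly inside the region of validity of Lemmas \ref{aH3} and \ref{aH4}, these two lemmas yield, uniformly in $\sigma$, $L^2$ bounds on the third-order derivatives $\nabla\partial_{x_1}^2\Psi^{\sigma}$ and on the fourth-order derivatives $\partial_{x_1}^4\Psi^{\sigma}$ and $\partial_{x_1}^3\partial_{x_2}\Psi^{\sigma}$, each controlled by $\|F_0\|_{H^3(\Omega_2)}$. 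What is still missing are the three fourth-order derivatives involving at most one $\partial_{x_1}$, namely $\partial_{x_1}^2\partial_{x_2}^2\Psi^{\sigma}$, $\partial_{x_1}\partial_{x_2}^3\Psi^{\sigma}$ and $\partial_{x_2}^4\Psi^{\sigma}$.

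These remaining derivatives are recovered from the equation itself. Since $a_{22}\equiv 1$, the PDE in \eqref{au2} rearranges into
\be\no
\partial_{x_2}^2\Psi^{\sigma}=F_0-\sigma\partial_{x_1}^3\Psi^{\sigma}-a_{11}\partial_{x_1}^2\Psi^{\sigma}-2a_{12}\partial_{x_1x_2}^2\Psi^{\sigma}-a_1\partial_{x_1}\Psi^{\sigma},
\ee
and applying the differential operators $\partial_{x_1}^2$, $\partial_{x_1}\partial_{x_2}$, and $\partial_{x_2}^2$ successively to this identity expresses each of the three missing derivatives as a finite linear combination of derivatives of $F_0$ up to order two, of $\Psi^{\sigma}$ up to order three, and of $\partial_{x_1}^4\Psi^{\sigma}$, $\partial_{x_1}^3\partial_{x_2}\Psi^{\sigma}$, with coefficients involving derivatives of $a_{ij}$ and $a_1$ up to order two. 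Since $a_{ij},a_1\in C^3(\overline{\Omega_2})$, these multipliers are bounded in $L^{\infty}$, so the $L^2$ norm of each missing derivative is controlled uniformly in $\sigma$ by $\|F_0\|_{H^3(\Omega_2)}$ together with the estimates already established. The singular terms of the form $\sigma\partial_{x_1}^k\Psi^{\sigma}$ with $k=3,4,5$ absorb a factor $\sqrt{\sigma}$ by means of the weighted bounds $\sigma\int|\partial_{x_1}^4\Psi^{\sigma}|^2$ and $\sigma\int|\partial_{x_1}^5\Psi^{\sigma}|^2$ provided by Lemmas \ref{aH3}--\ref{aH4}, and therefore stay uniformly bounded as $\sigma\to 0$.

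Passing $\sigma\to 0^+$ along a subsequence then gives $\Psi^{\sigma}\rightharpoonup\Psi$ weakly in $H^4$ on the transonic strip, and lower semicontinuity of the $H^4$ norm promotes the uniform bound to $\Psi=\psi$ there. Combining with the subsonic estimate yields \eqref{aH400}, with the constant depending only on $\|a_{ij}\|_{C^3(\overline{\Omega_2})}$ and $\|a_1\|_{C^3(\overline{\Omega_2})}$, and hence only on $\|k_{ij}\|_{C^3(\overline{\Omega})}$ and $\|k_1\|_{C^3(\overline{\Omega})}$. The main obstacle in carrying out this plan is the careful bookkeeping of $\sigma$-weights in the successive differentiations of the identity above: one must verify that every singular term picks up enough powers of $\sqrt{\sigma}$ from the weighted interior estimates, which is precisely the reason that the boundary conditions at $x_1=L_0$ and $x_1=L_2$ in \eqref{au2} were chosen to produce weaker boundary layers than in \cite{Kuzmin2002}.
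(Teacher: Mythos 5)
Your overall plan — combine the subsonic $H^4$ estimate of Lemma \ref{qH3} with the interior $\nabla\partial_{x_1}^2$ and $\nabla\partial_{x_1}^3$ estimates of Lemmas \ref{aH3}--\ref{aH4} on the transonic strip, identify $\psi$ with the restriction of $\Psi$ via Lemma \ref{equality}, and then recover the remaining fourth-order derivatives algebraically from the PDE — is the correct skeleton and is what the paper does. However, there is a concrete gap in the step where you recover $\partial_{x_1}\partial_{x_2}^3\Psi^\sigma$ and $\partial_{x_2}^4\Psi^\sigma$ from the $\sigma$-perturbed equation \eqref{au2}. Applying $\partial_{x_1}\partial_{x_2}$ and $\partial_{x_2}^2$ to the identity
\be\no
\partial_{x_2}^2\Psi^{\sigma}=F_0-\sigma\partial_{x_1}^3\Psi^{\sigma}-a_{11}\partial_{x_1}^2\Psi^{\sigma}-2a_{12}\partial_{x_1x_2}^2\Psi^{\sigma}-a_1\partial_{x_1}\Psi^{\sigma}
\ee
produces singular terms $\sigma\partial_{x_1}^4\partial_{x_2}\Psi^{\sigma}$ and $\sigma\partial_{x_1}^3\partial_{x_2}^2\Psi^{\sigma}$, which are \emph{not} among the derivatives you list. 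The weighted bounds in Lemmas \ref{aH3}--\ref{aH4} control only $\sigma\iint|\partial_{x_1}^4\Psi^{\sigma}|^2$ and $\sigma\iint|\partial_{x_1}^5\Psi^{\sigma}|^2$ — pure $x_1$-derivatives — not these mixed fifth-order derivatives. If you try to substitute again for the $\partial_{x_2}^2$ inside $\sigma\partial_{x_1}^3\partial_{x_2}^2\Psi^\sigma$ using the same identity, you generate $\sigma^2\partial_{x_1}^6\Psi^\sigma$ and $\sigma\partial_{x_1}^4\partial_{x_2}\Psi^\sigma$ in turn, so the bootstrap never terminates at fixed $\sigma>0$: the $\sigma\partial_{x_1}^3$ term keeps trading one $x_2$-derivative for three $x_1$-derivatives. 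A uniform-in-$\sigma$ $H^4$ bound on $\Psi^\sigma$ on the transonic strip is therefore not obtainable by this route.

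The paper's proof sidesteps this by reversing the order of operations: it first passes to the weak limit $\sigma\to 0^+$ using only the uniform $L^2$ bounds on $\nabla\partial_{x_1}^2\Psi^\sigma$ and $\nabla\partial_{x_1}^3\Psi^\sigma$ (from Lemmas \ref{aH3}--\ref{aH4} together with the subsonic Lemma \ref{qH3}), obtaining that $\psi$ itself has $\nabla\partial_{x_1}^2\psi,\ \nabla\partial_{x_1}^3\psi\in L^2(\Omega)$ with the desired estimate. Only then does it differentiate the \emph{limiting} equation \eqref{aH45},
\be\no
\partial_{x_2}^2\psi=G_0-k_{11}\partial_{x_1}^2\psi-2k_{12}\partial_{x_1x_2}^2\psi-k_1\partial_{x_1}\psi,
\ee
which has no $\sigma$-term, to produce the remaining weak derivatives. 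Applying $\partial_{x_1}^2$ gives $\partial_{x_1}^2\partial_{x_2}^2\psi$ in terms of $\partial_{x_1}^4\psi$ and $\partial_{x_1}^3\partial_{x_2}\psi$ (already controlled); applying $\partial_{x_1}\partial_{x_2}$ then gives $\partial_{x_1}\partial_{x_2}^3\psi$; and applying $\partial_{x_2}^2$ gives $\partial_{x_2}^4\psi$. Because there is no third-order dissipation term in the limiting equation, this bootstrap closes after finitely many substitutions. You should move the algebraic recovery step to after the passage to the limit, working with $\psi$ and equation \eqref{aH45} rather than with $\Psi^\sigma$ and \eqref{au2}.
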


\begin{proof}
The estimates in Lemmas \ref{aH2},\ref{aH3} and \ref{aH4} can be obtained for the finite Fourier series approximation $\Psi^{N,\sigma}=\sum_{j=1}^N A_j^{N,\sigma}(x_1) b_j(x_2)$ by same arguments as in Lemma \ref{exist1}. These estimates are uniformly in $N$. Thus one can extract a weakly convergent subsequence such that its weak limit coincides with the $H^2$ strong solution $\Psi^{\sigma}$ to \eqref{au2} due to the uniqueness. Moreover, the following estimate holds
\be\label{aH401}
\|\Psi^{\sigma}\|_{H^2(\Omega_2)}^2+\int_{\frac{1}{4}L_0}^{L_1+12\ell}\int_{-1}^1 |\nabla\p_{x_1}^2\Psi^{\sigma}|^2+ |\nabla\p_{x_1}^3\Psi^{\sigma}|^2 dx_2 dx_1 \leq C_{\sharp}\|F_0\|_{H^3(\Omega_2)}^2\leq C_{\sharp}\|G_0\|_{H^3(\Omega)}^2.
\ee
The estimate \eqref{aH401} is uniformly in $\sigma$, thus one can extract a subsequence $\{\Psi^{\sigma_j}\}_{j=1}^{\infty}$ which converges weakly to a function $\widetilde{\Psi}$ with the estimate
\be\label{aH402}
\|\widetilde{\Psi}\|_{H^2(\Omega_2)}^2+\int_{\frac{1}{4}L_0}^{L_1+12\ell}\int_{-1}^1 |\nabla\p_{x_1}^2\widetilde{\Psi}|^2+ |\nabla\p_{x_1}^3\widetilde{\Psi}|^2 dx_2 dx_1 \leq C_{\sharp}\|G_0\|_{H^3(\Omega)}^2.
\ee
The function $\widetilde{\Psi}$ coincides with the solution $\Psi$ constructed in Lemma \ref{equality} due to the uniqueness of the solution to \eqref{au1}. Moreover, by Lemma \ref{equality}, one has $\Psi=\psi$ in $\Omega$, therefore $\psi$ satisfies
\be\label{aH403}
\int_{\frac{1}{4}L_0}^{L_1}\int_{-1}^1 |\nabla\p_{x_1}^2\psi|^2+ |\nabla\p_{x_1}^3\psi|^2 dx_2 dx_1 \leq C_{\sharp}\|G_0\|_{H^3(\Omega)}^2.
\ee
This, together with \eqref{qH31}, yields that
\be\label{aH404}
\|\nabla\p_{x_1}^2\psi\|_{L^2(\Omega)}^2+\|\nabla\p_{x_1}^3\psi\|_{L^2(\Omega)}\leq C_{\sharp}\|G_0\|_{H^3(\Omega)}.
\ee
Since the following equality holds almost everywhere
\be\label{aH45}
\p_{x_2}^2\psi= G_0-k_{11}\p_{x_1}^2\psi- 2 k_{12}\p_{x_1x_2}^2\psi- k_1\p_{x_1}\psi,
\ee
one can further prove that the weak derivatives $\p_{x_1}\p_{x_2}^2\psi,\p_{x_1}\p_{x_2}^2\psi$ and $\p_{x_1}^2\p_{x_2}^2\psi, \p_{x_1}\p_{x_2}^3\psi, \p_{x_2}^4\psi$ exist and satisfy similar estimates as in \eqref{aH404}. Thus \eqref{aH400} is proved.

\end{proof}

Finally, we show that the constant $C_{\sharp}$ in \eqref{aH400} can be replaced by a constant $C_*$ which depends only on the $H^3(\Omega)$ norms of the coefficients $k_{11}, k_{12}$ and $k_1$.
\begin{lemma}({\bf $H^4$ estimate.})\label{qH4}
{\it There exists a constant $\delta_*>0$ depending only on the background flow, such that if $0<\delta_0\leq \delta_*$ in \eqref{qcoe}, the solution to
\eqref{qlinearized2} satisfies the compatibility condition
\be\label{cp3}
\p_{x_2}^3 \psi(x_1,\pm 1)=0 \ \ \text{in the sense of $H^1(\Omega)$ trace},
\ee
and the estimates
\be\label{f3}
&&\|\psi\|_{H^3(\Omega)} \leq C_* \|G_0\|_{H^2(\Omega)},\\\label{f4}
&&\|\psi\|_{H^4(\Omega)} \leq C_* \|G_0\|_{H^3(\Omega)},
\ee
where the constant $C_*$ depends only on the $H^3(\Omega)$ norms of the coefficients $k_{11}, k_{12}$ and $k_1$.
}\end{lemma}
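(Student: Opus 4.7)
The strategy is to refine the arguments of Lemmas~\ref{aH3}--\ref{H4first} so that the energy constant depends only on the $H^3(\Omega)$ norms of the coefficients rather than on their $C^3$ norms. First I derive the uniform estimates for the smooth-coefficient approximations of Lemma~\ref{H4first}, for which $\psi\in H^4(\Omega)$ is known a priori, and then pass to the limit via weak compactness in $H^4(\Omega)$.

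For the compatibility condition \eqref{cp3}, since $\psi\in H^4(\Omega)$ the derivative $\p_{x_2}^3\psi$ lies in $H^1(\Omega)$ and thus has a well-defined trace on $\{x_2=\pm 1\}$. Differentiating \eqref{qlinearized2} once in $x_2$ and evaluating at $x_2=\pm 1$, the identity $\p_{x_2}\psi(x_1,\pm 1)=0$ implies $\p_{x_1}^j\p_{x_2}\psi(x_1,\pm 1)=0$ for $j=0,1$, while \eqref{qcoe} furnishes $k_{12}=\p_{x_2}k_{11}=\p_{x_2}k_1=\p_{x_2}G_0=0$ on $\{x_2=\pm 1\}$. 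Every term in the differentiated equation then vanishes on the lateral boundary except $\p_{x_2}^3\psi$, which yields \eqref{cp3}.

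For the sharpened $H^3$ bound \eqref{f3}, I retrace the argument of Lemma~\ref{aH3} and replace each occurrence of $\|\p_{x_1}^2 a_{ij}\|_{L^\infty}$ (which is unavailable for $H^3$ coefficients in two dimensions) by the Ladyzhenskaya-type estimate
\be\no
\|\p_{x_1}^2 a_{ij}\cdot\p_{x_1}^k\psi\|_{L^2}\leq \|\p_{x_1}^2 a_{ij}\|_{L^4}\|\p_{x_1}^k\psi\|_{L^4}\leq C\|a_{ij}\|_{H^3}\|\psi\|_{H^2}^{1/2}\|\psi\|_{H^3}^{1/2}
\ee
for $k\leq 2$, using $H^1\hookrightarrow L^4$ in two dimensions together with $\|u\|_{L^4}\leq C\|u\|_{L^2}^{1/2}\|\nabla u\|_{L^2}^{1/2}$. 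Substituting this into the energy identity of Lemma~\ref{aH3} yields
\be\no
\|\psi\|_{H^3}^2\leq C\|G_0\|_{H^2}^2 + C\|a\|_{H^3}\|\psi\|_{H^2}^{1/2}\|\psi\|_{H^3}^{3/2};
\ee
Young's inequality and the bound $\|\psi\|_{H^2}\leq C_*\|G_0\|_{H^1}$ from Lemma~\ref{exist2} then close \eqref{f3}.

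For \eqref{f4} the analogous difficulty is that $\p_{x_1}^3 a_{ij}$ sits only in $L^2$. I invoke the two-dimensional embedding $H^{1+\epsilon}\hookrightarrow L^\infty$ together with the interpolation $\|u\|_{H^{1+\epsilon}}\leq C\|u\|_{H^1}^{1-\epsilon}\|u\|_{H^2}^\epsilon$ to obtain
\be\no
\|\p_{x_1}^3 a_{ij}\cdot \p_{x_1}^k\psi\|_{L^2}\leq \|\p_{x_1}^3 a_{ij}\|_{L^2}\|\p_{x_1}^k\psi\|_{L^\infty}\leq C\|a_{ij}\|_{H^3}\|\psi\|_{H^3}^{1-\epsilon}\|\psi\|_{H^4}^{\epsilon}
\ee
for $k\leq 2$ and any small $\epsilon>0$. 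Inserting this into the energy identity of Lemma~\ref{aH4} (obtained by testing the equation satisfied by $W_3=\p_{x_1}W_2$ against $d(x_1)\p_{x_1}W_3$) and applying Young's inequality with exponents $2/(1+\epsilon)$ and $2/(1-\epsilon)$, the resulting $\|\psi\|_{H^4}^{1+\epsilon}$ factor is absorbed to the left, leaving a term proportional to $\|a\|_{H^3}^{2/(1-\epsilon)}\|\psi\|_{H^3}^2$; invoking \eqref{f3} then closes \eqref{f4}. The decisive technical point is precisely this pairing of the rough $L^2$ coefficient derivative with the relatively smoother $\psi$ derivative in $L^\infty$ via $H^2\hookrightarrow L^\infty$ in two dimensions, combined with interpolation between the already-controlled $\|\psi\|_{H^3}$ and the unknown $\|\psi\|_{H^4}$; the interpolation is essential because $\|a_{ij}\|_{H^3}$ is not small.
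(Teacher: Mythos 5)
Your proposal is correct in its essential ideas but takes a genuinely different route from the paper. The paper's proof does not revisit the cut-off energy identities of Lemmas \ref{aH3}--\ref{aH4} at all. Instead, it observes that $\tilde w_1=\eta\,\partial_{x_1}\psi$ and $\tilde w_2=\eta\,\partial_{x_1}^2\psi$ (with $\eta$ a cutoff that vanishes near $x_1=L_0$, creating the Dirichlet condition at the entrance) each satisfy a mixed-type boundary-value problem of exactly the form \eqref{qlinearized2}, with the first-order coefficient shifted to $k_3=k_1+\partial_{x_1}k_{11}$, resp.\ $k_5=k_1+2\partial_{x_1}k_{11}$. Because the background inequalities \eqref{apos1} were established for $j=0,1,2,3$, the shifted coefficients still satisfy the sign conditions $2k_j\pm\partial_{x_1}k_{11}\leq-\kappa_*$, so Lemma \ref{exist2} can simply be reapplied to give $\|\tilde w_j\|_{H^2}\le C_*\|\tilde G_j\|_{H^1}$ with a constant depending only on $H^3$ norms. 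The rough coefficient derivatives are then controlled by the two-dimensional algebra bound $\|fg\|_{H^1}\lesssim\|f\|_{H^1}\|g\|_{H^2}$ together with the splitting $k=\bar k+(k-\bar k)$: the background part is smooth, and the perturbation part has small $H^3$ norm $\le\delta_0$, so that the dangerous term is $\delta_0\|\psi\|_{H^4}$, absorbed by choosing $\delta_*$ small. You instead replace $L^\infty$ bounds on $\partial_{x_1}^2 a, \partial_{x_1}^3 a$ by $L^4$ and $L^2$ norms paired with Ladyzhenskaya/Agmon interpolation for $\psi$, and absorb via Young's inequality. What each buys: your route avoids the background-plus-small-perturbation decomposition entirely (and, for \eqref{f4}, does not logically need $\delta_0$ small), so it is somewhat more general; the paper's route is more modular, as it re-uses the already-proved $H^2$ existence theory rather than redoing the differentiated energy identities, and the sign-condition check is a one-line consequence of \eqref{apos1}.

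Two points deserve more care in your version. First, the interpolation step only enters the energy identity through $\|\eta_{10}F_3\|_{L^2}^2$, whose left-hand side control is $\iint\eta_{10}^2|\nabla\partial_{x_1}^3\psi|^2$ — only part of $\|\psi\|_{H^4}^2$. The Young absorption of $\delta\|\psi\|_{H^4}^2$ is therefore only legitimate after you have assembled the subsonic elliptic piece (Lemma \ref{qH3}), the cut-off pieces with $\eta_8,\eta_9,\eta_{10}$, and the $\partial_{x_2}$-derivatives recovered from the equation \eqref{aH45} into a single inequality for $\|\psi\|_{H^4}^2$; the absorption must be done at that final stage, not inside each cut-off identity. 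Second, for the compatibility condition \eqref{cp3}, differentiating the equation in $x_2$ and taking the lateral trace requires that each product appearing (such as $k_{11}\partial_{x_1}^2\partial_{x_2}\psi$) lie in a space where traces are well-defined and compatible with the pointwise vanishing of $\partial_{x_2}\psi$; since the coefficients are only $H^3\subset C^{1,\alpha}$, the paper first shows \eqref{cp3} under the extra hypothesis $k\in C^4$ (via upgraded $H^5$ interior regularity on a slightly smaller domain) and then passes to general $H^3$ coefficients by density. Your direct trace argument is salvageable but should be wrapped in a similar approximation step to be fully rigorous.
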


\begin{proof}

Since it has shown that $\psi\in H^4(\Omega)$, then $w_1=\p_{x_1}\psi$ satisfies the following equation almost everywhere
\be\label{f1}\begin{cases}
\mathcal{L}_1 w_1:=\displaystyle\sum_{i,j=1}^2 k_{ij}\p_{x_i x_j}^2 w_1 + k_3 \p_{x_1} w_1+ k_4 \p_{x_2} w_1=G_1,\\
\p_{x_2} w_1(x_1,\pm 1)=0,\ \ \ \forall x_1\in [L_0,L_1],
\end{cases}\ee
where
\be\no
&&k_3= k_1 + \p_{x_1} k_{11},\ \ \ \ k_4=2 \p_{x_1} k_{12},\\\no
&&G_1=\p_{x_1} G_0-\p_{x_1} k_1 w_1.
\ee
Let $\eta$ be a monotone increasing smooth cutoff function on $[L_0,L_1]$ such that $0\leq \eta\leq 1$ and
\be\no
\eta(x_1)=\begin{cases}
0, \ \ &L_0\leq x_1\leq \frac{3L_0}{4},\\
1, \ \ &\frac{L_0}{2}\leq x_1\leq L_1.
\end{cases}\ee
Then $\tilde{w}_1=\eta w_1$ would satisfy
\be\label{f11}\begin{cases}
\mathcal{L}_1 \tilde{w}_1:=\displaystyle\sum_{i,j=1}^2 k_{ij}\p_{x_i x_j}^2 \tilde{w}_1 + k_3 \p_{x_1} \tilde{w}_1+ k_4 \p_{x_2} \tilde{w}_1=\tilde{G}_1, \ \ &\text{on }\ \ \Omega,\\
\tilde{w}_1(L_0,x_2)=0,\ \ &\text{on }\ \  (-1,1),\\
\p_{x_2} \tilde{w}_1(x_1,\pm 1)=0,\ \ &\text{on }\ \ [L_0,L_1],
\end{cases}\ee
where $\tilde{G}_1=\eta G_1+ \sum_{i,j=1}^2 k_{ij}(\p_{x_i}\eta\p_{x_j} w_1+ \p_{x_i} w_1\p_{x_j}\eta)+ w_1\mathcal{L}_1\eta$.

Note that if $0<\delta_0\leq \delta_*$ in \eqref{qcoe}, then there holds for any $(x_1,x_2)\in\Omega$
\be\no
&& 2k_3-\p_{x_1} k_{11}= 2k_1+ \p_{x_1} k_{11}\leq 2\bar{k}_1+\bar{k}_{11}'+2\|k_1-\bar{k}_{11}\|_{L^{\infty}}+\|\p_{x_1} k_{11}-\bar{k}_{11}'\|_{L^{\infty}}\leq -\kappa_*<0,\\\no
&& 2k_3+\p_{x_1} k_{11}= 2k_1+ 3\p_{x_1} k_{11}\leq 2\bar{k}_1+3\bar{k}_{11}'+2\|k_1-\bar{k}_{11}\|_{L^{\infty}}+3\|\p_{x_1} k_{11}-\bar{k}_{11}'\|_{L^{\infty}}\leq -\kappa_*<0.
\ee
Then as in Lemma \ref{exist2}, one can show that there exists a unique strong solution $v_1\in H^2(\Omega)$ to
\be\label{f12}\begin{cases}
\mathcal{L}_1v_1=\displaystyle\sum_{i,j=1}^2 k_{ij}\p_{x_i x_j}^2 v_1 + k_3 \p_{x_1} v_1 + k_4 \p_{x_2} v_1=\tilde{G}_1,\ \ &\text{on }\ \Omega,\\
v_1(L_0,x_2)=0,\ \ \ &\text{on }\ \  (-1,1),\\
\p_{x_2} v_1(x_1,\pm 1)=0,\ \ \ &\text{on }\ \  [L_0,L_1],
\end{cases}\ee
with the estimate
\be\label{f13}
\|v_1\|_{H^2(\Omega)}\leq C_*\|\tilde{G}_1\|_{H^1(\Omega)}.
\ee
By the uniqueness, $v_1=\tilde{w}_1$ holds a.e. in $\Omega$. Thus
\be\label{f14}
&&\left(\int_{\frac{L_0}{2}}^{L_1}\int_{-1}^1 |\nabla^2 w_1|^2 dx_2 dx_1\right)^{\frac{1}{2}}\leq \|\tilde{w}_1\|_{H^2(\Omega)}\leq C_*\|\tilde{G}_1\|_{H^1(\Omega)}\\\no
&&\leq C_*(\|G_0\|_{H^2}+\|\p_{x_1}k_1\|_{H^2}\|\p_{x_1}\psi\|_{H^1}+\|k_{ij}\|_{H^2}\|\eta' \nabla w_1\|_{H^1}+\|\mathcal{L}_1\eta\|_{H^2}\|w_1\|_{H^1})\\\no
&&\leq C_*\|G_0\|_{H^2(\Omega)}.
\ee
Combining this with \eqref{qH31}, one can conclude that
\be\label{f15}
\|\nabla^2 \p_{x_1}\psi\|_{L^2(\Omega)}\leq C_*\|G_0\|_{H^2(\Omega)}.
\ee
Using the equation in \eqref{aH45}, one can infer that $\p_{x_2}^3 \psi$ also satisfies the same inequality. Thus \eqref{f3} holds.

Set $w_2=\p_{x_1}w_1$. Then it holds that
\be\label{f2}\begin{cases}
\mathcal{L}_2 w_2:=\displaystyle\sum_{i,j=1}^2 k_{ij}\p_{x_i x_j}^2 w_2 + k_5 \p_{x_1} w_2 + k_6 \p_{x_2} w_2=G_2,\ \ &\text{on }\ \ \Omega,\\
\p_{x_2} w_2(x_1,\pm 1)=0,\ \ \ \text{on } \ \  [L_0,L_1],
\end{cases}\ee
where
\be\no
&&k_5= k_1 + 2\p_{x_1} k_{11},\ \ \ \ k_6=4 \p_{x_1} k_{12},\\\no
&&G_2=\p_{x_1}^2 G_0-(2\p_{x_1}k_1+\p_{x_1}^2 k_{11}) w_2-2\p_{x_1}^2 k_{12}\p_{x_2} w_1- \p_{x_1}^2 k_1 w_1.
\ee

Set $\tilde{w}_2= \eta w_2$. Then $\tilde{w}_2$ solves
\be\label{f21}\begin{cases}
\mathcal{L}_2 \tilde{w}_2=\displaystyle\sum_{i,j=1}^2 k_{ij}\p_{x_i x_j}^2 \tilde{w}_2 + k_5 \p_{x_1} \tilde{w}_2 + k_6 \p_{x_2} \tilde{w}_2=\tilde{G}_2,\ &\text{on }\ \Omega,\\
\tilde{w}_2(L_0,x_2)=0,\ \ \ &\text{on } \ \ (-1,1),\\
\p_{x_2} \tilde{w}_2(x_1,\pm 1)=0,\ \ \ &\text{on }\ \  [L_0,L_1],
\end{cases}\ee
where
$$
\tilde{G}_2=\eta G_2+ \sum_{i,j=1}^2 k_{ij}(\p_{x_i}\eta\p_{x_j} w_2+\p_{x_i}w_2\p_{x_j} \eta)+ w_2\mathcal{L}_2\eta.
$$
Note that if $0<\delta_0\leq \delta_*$ in \eqref{qcoe}, then there holds that for any $(x_1,x_2)\in\Omega$
\be\no
&& 2k_5-\p_{x_1} k_{11}= 2k_1+ 3\p_{x_1} k_{11}\leq 2\bar{k}_1+3\bar{k}_{11}'+2\|k_1-\bar{k}_{11}\|_{L^{\infty}}+3\|\p_{x_1} k_{11}-\bar{k}_{11}'\|_{L^{\infty}}\leq -\kappa_*<0,\\\no
&& 2k_5+\p_{x_1} k_{11}= 2 k_1+ 5\p_{x_1} k_{11}\leq 2\bar{k}_1+5\bar{k}_{11}'+2\|k_1-\bar{k}_{11}\|_{L^{\infty}}+5\|\p_{x_1} k_{11}-\bar{k}_{11}'\|_{L^{\infty}}\leq -\kappa_*<0.
\ee
Then as in Lemma \ref{exist2}, one can show that $\tilde{w}_2$ is the unique $H^2(\Omega)$ strong solution to \eqref{f21} with the estimate
\be\no
&&\left(\int_{\frac{L_0}{2}}^{L_1}\int_{-1}^1 (|\nabla w_2|^2+|\nabla^2 w_2|^2) dx_2 dx_1\right)^{\frac{1}{2}}\leq \|\tilde{w}_2\|_{H^2(\Omega)}\leq C_*\|\tilde{G}_2\|_{H^1(\Omega)}\\\no
&&\leq C_*(\|G_0\|_{H^3}+\|(\bar{k}_1'+\bar{k}_{11}'')w_2\|_{H^1}+\|\p_{x_1} k_1-\bar{k}_1+ \p_{x_1}^2 k_{11}-\bar{k}_{11}''\|_{H^1}\|w_2\|_{H^2}\\\no
&&\quad\quad+\|\p_{x_1}^2 k_{12}\|_{H^1}\|\p_{x_2} w_1\|_{H^2}+\|\p_{x_1}^2 k_{11}\|_{H^1}\|w_1\|_{H^2}+\|k_{ij}\|_{H^2}\|\eta' \nabla w_2\|_{H^1}+\|\mathcal{L}_2\eta\|_{H^2}\|w_2\|_{H^1})\\\no
&&\leq C_*(\|G_0\|_{H^3(\Omega)}+\|\psi\|_{H^3(\Omega)}+\delta_0\|\psi\|_{H^4(\Omega)})\leq C_*(\|G_0\|_{H^3(\Omega)}+\delta_0\|\psi\|_{H^4(\Omega)}).
\ee
It follows from \eqref{qH31} and the equation \eqref{aH45} that
\be\no
\|\psi\|_{H^4(\Omega)}\leq C_*(\|G_0\|_{H^3(\Omega)}+\delta_0\|\psi\|_{H^4(\Omega)}).
\ee
Let $0<\delta_0\leq \delta_*$ so that $C_*\delta_*\leq \frac{1}{2}$. Then \eqref{f4} follows.

It remains to prove the compatibility condition \eqref{cp3}. By \eqref{c1}, it suffices to show that \eqref{cp3} holds on $[\frac{1}{8}L_0, L_1]$. Suppose $k_{11}, k_{12}$ and $k_1\in C^4(\overline{\Omega})$, then the coefficients $a_{11}, a_{12}$ and $a_1\in C^4(\overline{\Omega_2})$. One may obtain the $L^2$ estimate of $\nabla \p_{x_1}^4\Psi^{\sigma}$ on the domain $D_3:=(\frac{L_0}{8}, L_1+10\ell)\times (-1,1)$ as in Lemma \ref{aH4}, and then derive the estimate of $\|\Psi\|_{H^5(D_3)}$, which implies that $\Psi\in C^{3,\alpha}(\overline{D_3})$ and $\psi\in C^{3,\alpha}([\frac{L_0}{8},L_1]\times [-1,1])$. Then the fact $\p_{x_2}^3\psi(x_1,\pm 1)=0$ for any $x_1\in [\frac{1}{8}L_0, L_1]$ follows by differentiating the equation \eqref{aH45} with respect to $x_2$ and evaluating at $x_2=\pm 1$. The general case follows by a density argument.

\end{proof}

\subsection{Proof of Theorem \ref{q-irro}}\noindent

We are now ready to prove Theorem \ref{q-irro}. For any $\hat{\psi}\in \Sigma_{\delta_0}$, then \eqref{qcoe} holds. By Lemma \ref{exist2} and Lemma \ref{qH4}, there exists a unique solution $\psi\in H^4(\Omega)$ to \eqref{qlinearized2} with the estimate
\be\no
\|\psi\|_{H^4(\Omega)}\leq C_*\|G_0(\nabla \hat{\psi})\|_{H^3(\Omega)}\leq m_*\|G_0(\nabla \hat{\psi})\|_{H^3(\Omega)}.
\ee
Here the constant $C_*$ depends only on the $H^3(\Omega)$ norms of the coefficients $k_{11}, k_{12}$ and $k_1$, which can be bounded by a constant $m_*$ depends on the $C^3([L_0,L_1])$ norm of $\bar{k}_{11},\bar{k}_1$ and the boundary data. In the following, the constant $m_*$ will always denote a constant depending only on the background solutions and the boundary data.

Recall the definition of $G_0(\nabla \hat{\psi})$ in \eqref{g0} and note that the support of $\eta_0(x_1)$ defined in \eqref{eta} is contained in $[L_0,\frac{7}{8}L_0]$. By the $H^4$ estimate \eqref{qH31} in Lemma \ref{qH3} and carefully checking the estimates obtained in Lemmas \ref{aH3},\ref{aH4}, \ref{H4first} and \ref{qH4}, one can get a better estimate as
\be\no
\|\psi\|_{H^4(\Omega)}&\leq& m_*\bigg(\|G(\nabla \hat{\psi})\|_{H^3(\Omega)}+ \epsilon(\|k_{11}(\nabla\hat{\psi})\|_{H^2(\Omega)}+\|k_1(\nabla\hat{\psi})\|_{H^2(\Omega)})\bigg\|\int_{-1}^{x_2} h_1(s) ds\bigg\|_{H^2((-1,1))}\\\no
&\quad&\quad+\epsilon\|k_{12}(\nabla\hat{\psi})\|_{H^2(\Omega)}\|h_1\|_{H^2((-1,1))}+\epsilon\|h_1'\|_{H^2((-1,1))}\bigg)\\\no
&\leq& m_*(\|\hat{\psi}\|_{H^4(\Omega)}^2+ \epsilon \|h_1\|_{H^3((-1,1))})\leq m_*(\epsilon +\delta_0^2).
\ee
Here only the norm $\|h_1\|_{H^3((-1,1))}$ is needed, that is the reason introducing the cut-off function $\eta_0$ in \eqref{g0}.

Let $\delta_0=\sqrt{\epsilon}$, then if $0<\epsilon\leq \epsilon_0=\min\{\frac{1}{4m_*^2}, \delta_*^2\}$, then
\be\no
\|\psi\|_{H^4(\Omega)}\leq m_*(\epsilon+ \delta_0^2)= 2m_*\epsilon\leq \delta_0.
\ee
By \eqref{cp3}, $\psi\in \Sigma_{\delta_0}$. Hence one can define the operator $\mathcal{T}\hat{\psi}=\psi$, which maps $\Sigma_{\delta_0}$ to itself. It remains to show that the mapping $\mathcal{T}$ is contractive in a low order norm for a sufficiently small $\epsilon_0$. Suppose that $\psi^{(i)}=\mathcal{T}\hat{\psi}^{(i)} (i=1,2)$ for any $\hat{\psi}^{(1)}$ and $\hat{\psi}^{(2)}\in \Sigma_{\delta_0}$. Then for $k=1,2$,
\begin{align}\no
\begin{cases}
\mathcal{L}^{(k)}\psi^{(k)}\equiv \displaystyle \sum_{i,j=1}^2 k_{ij}(\nabla \hat{\psi}^{(k)})\partial_{x_i x_j}^2\psi^{(k)}+ k_1(\nabla \hat{\psi}^{(k)})\partial_{x_1}\psi^{(k)}=G_0(\nabla \hat{\psi}^{(k)}),\\
\psi^{(k)}(L_0,x_2)=0,\ \ \ \forall x_2\in (-1,1),\\
\partial_{x_2}\psi^{(k)}(x_1,-1)=\partial_{x_2}\psi^{(k)}(x_1,1)=0,\ \ \forall x_1\in (L_0,L_1).
\end{cases}
\end{align}
Thus
\begin{align}\no
\begin{cases}
\mathcal{L}^{(1)}(\psi^{(1)}-\psi^{(2)})=G_0(\nabla\hat{\psi}^{(1)})-G_0(\nabla\hat{\psi}^{(2)})-(\mathcal{L}^{(1)}-\mathcal{L}^{(2)})\psi^{(2)}\\%
(\psi^{(1)}-\psi^{(2)})(L_0,x_2)=0,\ \ \ \forall x_2\in (-1,1),\\
\partial_{x_2}(\psi^{(1)}-\psi^{(2)})(x_1,-1)=\partial_{x_2}(\psi^{(1)}-\psi^{(2)})(x_1,1)=0,\ \ \forall x_1\in (L_0,L_1).
\end{cases}
\end{align}
Since $\psi^{(i)}$ and $\hat{\psi}^{(i)}\in \Sigma_{\delta_0}$, for $i=1,2$, the $H^1$ estimate in Lemma \ref{qH1estimate} yields that
\begin{eqnarray}\no
&&\|\mathcal{T}\hat{\psi}^{(1)}-\mathcal{T}\hat{\psi}^{(2)}\|_{H^1(\Omega)}=\|\psi^{(1)}-\psi^{(2)}\|_{H^1(\Omega)}\\\no
&&\leq C_*\|G_0(\nabla\hat{\psi}^{(1)})-G_0(\nabla\hat{\psi}^{(2)})-(\mathcal{L}^{(1)}-\mathcal{L}^{(2)})\psi^{(2)}\|_{L^2(\Omega)}\\\no
&&\leq m_* \delta_0\|\hat{\psi}^{(1)}-\hat{\psi}^{(2)}\|_{H^1(\Omega)}\leq \frac{1}{2}\|\hat{\psi}^{(1)}-\hat{\psi}^{(2)}\|_{H^1(\Omega)},
\end{eqnarray}
Therefore $\mathcal{T}$ is a contractive mapping in $H^1$-norm. Then there exists a unique $\psi\in \Sigma_{\delta_0}$ to $\mathcal{T}\psi=\psi$.

In conclusion, we have shown that there exists a small $\epsilon_0>0$ such that for any $0<\epsilon<\epsilon_0$, the problem \eqref{q18} has a unique solution $\psi\in \Sigma_{\delta_0}$ with the estimate $\|\psi\|_4\leq m_*\epsilon$. That is, the background transonic flow is structurally stable within irrotational flows under perturbations of boundary conditions in \eqref{qpbcs}.

Finally, we examine the location of all the sonic points where $|{\bf M}(x_1,x_2)|^2=1$ with ${\bf M}=(M_1,M_2)^t:=(\frac{u_1}{c(\rho)},\frac{u_2}{c(\rho)})^t$. It follows from \eqref{q2d10} and the Sobolev embedding $H^3(\Omega)\hookrightarrow C^{1,\alpha}(\overline{\Omega})$ for any $\alpha\in (0,1)$ that
\be\no
\||{\bf M}|^2-\bar{M}^2\|_{C^{1,\alpha}(\overline{\Omega})}\leq \||{\bf M}|^2-\bar{M}^2\|_{H^3(\Omega)}\leq m_*\epsilon.
\ee
Note that
\be\no
|\bar{M}(L_0)|^2<1, \ |\bar{M}(L_1)|^2>1, \ \ \displaystyle \sup_{x_1\in[L_0,L_1]} \frac{d}{d x_1}\bar{M}^2>0.
\ee
Thus for sufficiently small $\epsilon$, one still has
\be\no
|{\bf M}(L_0,x_2)|^2<1, \ |{\bf M}(L_1,x_2)|^2>1, \ \ \ \forall x_2\in [-1,1],
\ee
and
\be\no
\frac{\p}{\p x_1}|{\bf M}(x_1,x_2)|^2>0, \ \ \ \forall (x_1,x_2)\in\Omega.
\ee
Therefore for each $x_2\in [-1,1]$, there exists a unique $\xi(x_2)\in (L_0,L_1)$ such that $|{\bf M}(\xi(x_2),x_2)|^2=1$. Also by the implicit function theorem, the function $\xi\in C^{1}([-1,1])$. Furthermore, since
\be\no
||{\bar M}(\xi(x_2))|^2-|{\bar M}(0)|^2|&=&|{\bar M}(\xi(x_2))|^2-|{\bf M}(\xi(x_2),x_2)|^2|\\\no
&\leq& \||{\bf M}|^2-|{\bar M}|^2\|_{C^{1,\alpha}(\overline{\Omega})} \leq m_*\epsilon,
\ee
one can deduce that $|\xi(x_2)|\leq m_*\epsilon$ for any $x_2\in [-1,1]$. Differentiating the identity $|{\bf M}(\xi(x_2),x_2)|^2=1$ with respect to $x_2$ yields
\be\no
\xi'(x_2)=-\left(\frac{\p}{\p x_1}|{\bf M}|^2 (\xi(x_2),x_2) \right)^{-1}\frac{\p}{\p x_2}|{\bf M}|^2 (\xi(x_2),x_2)
\ee
and thus the estimate \eqref{qsonic} holds. The proof of Theorem \ref{q-irro} is completed.

\section{The existence of smooth transonic flows with nonzero vorticity}\label{rotation}\noindent

Now we turn to the case of rotational flows and prove Theorem \ref{2dmain}. As the steady two dimensional Euler system, the system \eqref{q2deuler} is elliptic-hyperbolic coupled in subsonic region and changes type when the flow changes smoothly from subsonic to supersonic. To resolve the system \eqref{q2deuler}, one needs to decouple effectively the elliptic and hyperbolic modes for further mathematical analysis. Here we will employ the deformation-curl decomposition developed by the authors in \cite{WengXin19,weng2019} to deal with the elliptic-hyperbolic coupled structure for the quasi two dimensional model \eqref{q2deuler}. The Bernoulli's law yields
\be\label{qdensity}
\rho=\rho(|{\bf u}|^2,B)=\left(\frac{\gamma-1}{\gamma} (B-\frac{1}{2} |{\bf u}|^2)\right)^{\frac{1}{\gamma-1}}.
\ee
It is easy to show that if a smooth flow does not contain the vacuum and the stagnation points, then the system \eqref{q2deuler} is equivalent to the following system
\be\label{qbc7}\begin{cases}
\frac{c^2(\rho)-u_1^2}{c^2(\rho)-u_2^2}\p_{x_1} u_1 -\frac{u_1 u_2}{c^2(\rho)-u_2^2}(\p_{x_1} u_2+\p_{x_2} u_1)+\p_{x_2} u_2+\frac{b(x_1) c^2(\rho)u_1}{c^2(\rho)-u_2^2}\\
=-\frac{1}{c^2(\rho)-u_2^2}(u_1\p_{x_1} B+ u_2 \p_{x_2} B),\\
\p_{x_1} u_2 -\p_{x_2} u_1 =-\frac{\p_{x_2} B}{u_1},\\
a(x_1)\rho(|{\bf u}|^2,B)(u_1\p_{x_1}+ u_2 \p_{x_2}) B=0,
\end{cases}\ee
where $c^2(\rho)=\gamma \rho^{\gamma-1}=(\gamma-1)(B-\frac{1}{2} |{\bf u}|^2)$. Indeed, the first equation in \eqref{qbc7} is obtained from substituting \eqref{qdensity} into the first equation in \eqref{q2deuler}, while the second equation in \eqref{qbc7} just follows from the momentum equations in \eqref{q2deuler}.

Let
\be\label{qd}
v_1= u_1- \bar{u},\ \ v_2 = u_2,\ \ \ Q= B- B_0.
\ee
Then ${\bf v}=(v_1,v_2)$ and $Q$ solve
\begin{eqnarray}\label{qaa}
\begin{cases}
k_{11}({\bf v},Q)\p_{x_1} v_1+\p_{x_2} v_2+k_{12}({\bf v},Q)(\p_{x_1}v_2+\p_{x_2} v_1)+\bar{k}_1(x_1)v_1=F_1({\bf v},Q),\\
\p_{x_1} v_2-\p_{x_2} v_1=F_2({\bf v},Q),\\
a(x_1)\rho({\bf v},Q)((\bar{u}+v_1)\p_{x_1}  + v_2\p_{x_2}) Q=0,\\
Q(L_0,x_2)=\epsilon B_{in}(x_2),\forall x_2\in (-1,1),\\
v_2(L_0,x_2)=\epsilon h_{1}(x_2),\forall x_2\in (-1,1),\\
v_2(x_1,\pm 1)=0,\ \ \forall x_1\in [L_0,L_1],
\end{cases}
\end{eqnarray}
where
\begin{eqnarray}\label{qf}
\begin{cases}
k_{11}({\bf v},Q)=\frac{c^2(\rho)-(\bar{u}+v_1)^2}{c^2(\rho)-v_2^2}, k_{12}({\bf v},Q)=-\frac{(\bar{u}+v_{1})v_{2}}{c^2(\rho)-v_2^2},\\
c^2(\rho)= (\gamma-1)(B_0+Q-\frac{1}{2} ((\bar{u}+v_1)^2+v_2^2)),\ \ \rho(v,Q)=\left(\frac{\gamma-1}{\gamma}(B_0+Q-\frac{1}{2} ((\bar{u}+v_1)^2+v_2^2))\right)^{\frac{1}{\gamma-1}},\\
\bar{k}_1(x_1)=b(x_1)-\frac{(\gamma-1)\bar{u}\bar{u}'}{c^4(\bar{\rho})}(\bar{u}^2+\frac{2c^2(\bar{\rho})}{\gamma-1})=\frac{2+(\gamma-1) \bar{M}^4}{1-\bar{M}^2} b(x_1),\\
F_1({\bf v},Q)=-\frac{(\gamma-1)\bar{u}^2\bar{u}'}{c^4(\bar{\rho})} Q-\frac{\bar{u}'}{c^2(\bar{\rho})(c^2(\rho)-v_2^2)}\{(c^2(\bar{\rho})-\bar{u}^2)v_2^2-(\gamma-1)(B_0 v_1^2+ \frac{1}{2}\bar{u}^2 v_2^2)\}\\
\quad +\frac{(\gamma-1)\bar{u}'}{c^4(\bar{\rho})(c^2(\rho)-v_2^2)}(c^2(\rho)-v_2^2-c^2(\bar{\rho}))\{\bar{u}^2 Q-(\bar{u}^2+\frac{2c^2(\bar{\rho})}{\gamma-1})\bar{u} v_1\}-\frac{(\bar{u}+v_1)\p_{x_1} Q + v_2\p_{x_2} Q}{c^2(\rho)-v_2^2},\\
F_2({\bf v},Q)=-\frac{\p_{x_2} Q}{\bar{u}+v_1}.
\end{cases}
\end{eqnarray}

Note that $F_2({\bf v},Q)$ only belongs to $H^2(\Omega)$ in general for $({\bf v}, Q)\in H^3(\Omega)$, the first two equations in \eqref{qaa} can be regarded as a first order system for $(v_1,v_2)$, which change types when crossing the sonic curve, the energy estimates obtained in the previous section for irrotational flows
indicate that the regularity of the solutions $v_1, v_2$ would be at best same as the source terms on the right hand sides in general. Thus it seems that only $H^2(\Omega)$ regularity for $(v_1,v_2)$ is possible and there appears a loss of derivatives. To recover the loss of derivatives, we require that one order higher regularity of the Bernoulli's quantity at the entrance. Using the continuity equation, we introduce the stream function which has the advantage of one order higher regularity than the velocity field. The Bernoulli's quantity can be represented as a function of the stream function. However, this function involves the inverse function of the restriction of the stream function at the entrance. There is still a loss of $\frac{1}{2}$ derivatives if the stream function only belongs to $H^4(\Omega)$. We further observe that the regularity of the flows in the subsonic region can be improved be $C^{3,\alpha}$ if the data at
the entrance have better $C^{3,\alpha}$ regularity so that the regularity of the stream function near the entrance can be improved to be $C^{4,\alpha}$. This will enable us to overcome the possibility of losing derivatives. To achieve this, we will choose some appropriate function spaces and design an elaborate two-layer iteration scheme to prove Theorem \ref{2dmain}.

Define $\Omega_{1/2}=\{(x_1,x_2): L_0<x_1<\frac{L_0}{2}, x_2 \in (-1,1)\}$ and
\be\no
&&\Omega_{1/3}=\left\{(x_1,x_2): L_0<x_1<\frac{L_0}{3}, x_2 \in (-1,1)\right\},\\\no
&&\Omega_{1/4}=\left\{(x_1,x_2): L_0<x_1<\frac{L_0}{4}, x_2 \in (-1,1)\right\}.
\ee
Then $\Omega_{1/2}\subset \Omega_{1/3}\subset \Omega_{1/4}$. Set
\be\nonumber
\Sigma_1&=&\bigg\{(v_1,v_2)\in (H^3(\Omega))^2: \sum_{j=1}^2\|v_j\|_{H^3(\Omega)}\leq \delta_0, v_2(x_1,\pm 1)= \p_{x_2}v_1(x_1,\pm 1)=\p_{x_2}^2v_2(x_1,\pm 1)=0\bigg\},\\\no
\Sigma_2&=&\bigg\{Q\in H^4(\Omega)\cap C^{3,\alpha}(\overline{\Omega_{1/3}})\cap C^{4,\alpha}(\overline{\Omega_{1/2}}):\\\no
&\quad&\quad\|Q\|_{H^4(\Omega)}+ \|Q\|_{C^{3,\alpha}(\overline{\Omega_{1/3}})}+ \|Q\|_{C^{4,\alpha}(\overline{\Omega_{1/2}})}\leq \delta_1, \p_{x_2} Q(x_1,\pm 1)=\p_{x_2}^3 Q(x_1,\pm 1)=0\bigg\}
\ee
with positive constants $\delta_0$ and $\delta_1>0$ to be specified later. For fixed $\tilde{Q} \in\Sigma_2$, we first construct an operator $\mathcal{T}^{\tilde{Q}}$:
$(\tilde{v}_1,\tilde{v}_2)\in \Sigma_1\mapsto (v_1,v_2)\in \Sigma_1$ by resolving the following boundary value problem
\begin{eqnarray}\label{q72}
\begin{cases}
k_{11}(\tilde{{\bf v}},\tilde{Q})\p_{x_1} v_1+\p_{x_2} v_2+k_{12}(\tilde{{\bf v}},\tilde{Q})(\p_{x_1}v_2+\p_{x_2} v_1)+\bar{k}_1 v_1= F_1(\tilde{{\bf v}},\tilde{Q}),\\
\p_{x_1} v_2-\p_{x_2} v_1=F_2(\tilde{{\bf v}},\tilde{Q})=-\frac{\p_{x_2} \tilde{Q}}{\bar{u}+\tilde{v}_1},\\
v_2(L_0,x_2)=\epsilon h_{1}(x_2),\ \ \ \forall x_2\in (-1,1),\\
v_2(x_1,\pm 1)=0, \ \ \forall x_1\in [L_0,L_1].
\end{cases}
\end{eqnarray}
Note that the first two equations in \eqref{q72} form a linear first order mixed type system with coefficients given in \eqref{qf}.

Since $\tilde{Q}\in \Sigma_2, \tilde{{\bf v}}\in \Sigma_1$, there holds that
\begin{eqnarray}\no
\|F_1(\tilde{{\bf v}},\tilde{Q})\|_{H^3(\Omega)}\leq m_*(\delta_1+\epsilon \delta_0+\delta_0\delta_1+\delta_0^2),\ \ \ \|F_2(\tilde{{\bf
v}},\tilde{Q})\|_{H^3(\Omega)}\leq m_*\delta_1.
\end{eqnarray}

Let $\psi_1(x_1,x_2)$ be the unique solution to the following problem
\begin{align}\no
\begin{cases}
(\p_{x_1}^2+\p_{x_2}^2)\psi_1=F_2(\tilde{{\bf v}},\tilde{Q})\in H^3(\Omega),\ &\text{on }\ \ \Omega,\\
\p_{x_1}\psi_1(L_0,x_2)=\p_{x_1}\psi_1(L_1,x_2)=0,\ \ &\text{on }\ \ \ x_2\in [-1,1],\\
\psi_1(x_1,\pm 1)=0, \ &\text{on }\ \ \ x_1\in [L_0,L_1].
\end{cases}
\end{align}
Since $\p_{x_2}\tilde{Q}(x_1,\pm 1)=\p_{x_2}^3\tilde{Q}(x_1,\pm 1)=0$ and $\p_{x_2} \tilde{v}_1(x_1,\pm 1)=0$, there holds
$$F_2(\tilde{{\bf v}},\tilde{Q})(x_1,\pm 1)=\p_{x_2}^2 (F_2(\tilde{{\bf v}},\tilde{Q}))(x_1,\pm 1)=0.$$
As in Lemma \ref{qH3}, one may use the symmetric extension technique to show that $\psi_1\in H^{5}(\Omega)$, $\p_{x_2}^2 \psi_1(x_1,\pm 1)=\p_{x_2}^4\psi_1(x_1,\pm 1)=0$ and the following estimate holds
\begin{align}\no
\|\psi_1\|_{H^{5}(\Omega)}\leq m_*\|F_2(\tilde{{\bf v}},\tilde{Q})\|_{H^3(\Omega)}\leq m_*\delta_1.
\end{align}

Next, we show the well-posedness of the classical solution $(w_1,w_2)\in H^3(\Omega)$ to the problem
\begin{eqnarray}\label{qs21}
\begin{cases}
k_{11}(\tilde{{\bf v}},\tilde{Q})\p_{x_1} w_1+\p_{x_2} w_2+k_{12}(\tilde{{\bf v}},\tilde{Q})(\p_{x_1}w_2+\p_{x_2} w_1)+\bar{k}_1 w_1= F_3(\tilde{{\bf v}},\tilde{Q},\psi_1),\ &\text{on }\ \ \Omega,\\
\p_{x_1} w_2-\p_{x_2} w_1=0,\ &\text{on }\ \ \Omega,\\
w_2(L_0,x_2)=\epsilon h_{1}(x_2),\ \ &\text{on } [-1,1],\\
w_2(x_1,\pm 1)=0,\ \ \ &\text{on } \ \ [L_0,L_1],
\end{cases}
\end{eqnarray}
where
\be\no
F_3(\tilde{{\bf v}},\tilde{Q},\psi_1)= F_1(\tilde{{\bf v}},\tilde{Q})+(k_{11}(\tilde{{\bf v}},\tilde{Q})-1)\p_{x_1x_2}^2\psi_1- k_{12}(\tilde{{\bf
v}},\tilde{Q})(\p_{x_1}^2\psi_1 -\p_{x_2}^2\psi_1)+\bar{k}_1 \p_{x_2}\psi_1\in H^3(\Omega).
\ee

Thanks to the second equation in \eqref{qs21}, there is a potential function $\psi_2$ such that $w_j=\p_{x_j} \psi_2$ for $j=1,2$ and $\psi_2(L_0,-1)=0$. Then the system \eqref{qs21} is reduced to the following second-order linear mixed type equation for $\psi_2$:
\begin{eqnarray}\label{qs22}
\begin{cases}
k_{11}(\tilde{{\bf v}},\tilde{Q})\p_{x_1}^2\psi_2+\p_{x_2}^2\psi_2+2k_{12}(\tilde{{\bf v}},\tilde{Q})\p_{x_1x_2}^2\psi_2+\bar{k}_1\p_{x_1}\psi_2
=F_3(\tilde{{\bf v}},\tilde{Q},\psi_1),\\
\p_{x_2}\psi_2(L_0,x_2)=\epsilon h_1(x_2),\ \ \forall x_2\in (-1,1),\\
\p_{x_2}\psi_2(x_1,\pm 1)=0,\ \ \ \forall x_1\in [L_0,L_1].
\end{cases}
\end{eqnarray}

It is easy to verify that the following compatibility conditions hold for a.e. $x_1\in (L_0,L_1)$:
\be\no
&&\p_{x_2} (F_3(\tilde{{\bf v}},\tilde{Q},\psi_1))(x_1,\pm 1)=\p_{x_2} k_{11}(\tilde{{\bf v}},\tilde{Q})(x_1,\pm 1)=0,\ \, \forall x_1\in [L_0,L_1],\\\no
&&\p_{x_2}^2 (k_{12}(\tilde{{\bf v}},\tilde{Q}))(x_1,\pm 1)=0,\ \, \forall x_1\in [L_0,L_1],\\\no
&&\|F_3(\tilde{{\bf v}},\tilde{Q},\psi_1)\|_{H^3(\Omega)}\leq C_0(\delta_1+\delta_0^2).
\ee

The problem \eqref{qs22} is a slight variation of \eqref{qlinearized1} whose coefficients satisfy also \eqref{qcoe}. One can adapt the same ideas in previous section to show the existence and uniqueness of a classical solution $\psi_2\in H^4(\Omega)$ to \eqref{qs22} satisfying $\p_{x_2}^3\psi_2(x_1,\pm 1)=0$ in the sense of $H^1(\Omega)$ trace and the estimate
\be\label{qs23}
\|\psi_2\|_{H^4(\Omega)}&\leq& C_*(\|F_3(\tilde{{\bf v}},\tilde{Q},\psi_1)\|_{H^3(\Omega)}+\epsilon\|h_1\|_{H^{3}([-1,1])})\\\no
&\leq& m_*(\epsilon+\delta_1+\delta_0^2).
\ee
Then $v_1:=\p_{x_1}\psi_2-\p_{x_2}\psi_1$ and $v_2:=\p_{x_2}\psi_2+ \p_{x_1}\psi_1$ are the unique $H^3(\Omega)$ solution to \eqref{q72} with
\be\no
&&v_2(x_1,\pm 1)= \p_{x_2}v_1(x_1,\pm 1)=\p_{x_2}^2v_2(x_1,\pm 1)=0,\\\no
&&\sum_{j=1}^2\|v_j\|_{H^3(\Omega)}\leq \sum_{j=1}^2\|\psi_j\|_{H^4(\Omega)}\leq m_*(\epsilon+\delta_1+\delta_0^2).
\ee

Choose $\delta_0=\sqrt{\epsilon+\delta_1}$ and $\sqrt{\epsilon+\delta_1}\leq \frac{1}{2m_*}$. Then $\mathcal{T}^{\tilde{Q}}$ is a well-defined operator from $\Sigma_1$ to itself. It remains to show that the operator $\mathcal{T}^{\tilde{Q}}$ is contractive in a low order norm for sufficiently small $\epsilon+\delta_1$. Set ${\bf
v}^{(i)}=\mathcal{T}^{\tilde{Q}}(\tilde{{\bf v}}^{(i)}) (i=1,2)$ for any $\tilde{\bf v}^{(1)}$, $\tilde{\bf v}^{(2)}\in \Sigma_1$ and denote
$\tilde{v}_j^{(1)}-\tilde{v}_j^{(2)}$ by $\tilde{V}_j$ and $v_j^{(1)}-v_j^{(2)}$ by $V_j$ for $j=1,2$. Then it follows from \eqref{q72} that
\begin{eqnarray}\no
\begin{cases}
k_{11}(\tilde{{\bf v}}^{(1)},\tilde{Q})\p_{x_1}V_1+ \p_{x_2} V_2+ k_{12}(\tilde{{\bf v}}^{(1)},\tilde{Q})(\p_{x_1} V_2+ \p_{x_2} V_1)+\bar{k}_1 V_1=\mathbb{F}(\tilde{{\bf
v}}^{(1)},\tilde{{\bf v}}^{(2)}, \tilde{Q}),\\
\p_{x_1}V_2-\p_{x_2} V_1=F_2(\tilde{{\bf v}}^{(1)},\tilde{Q})-F_2(\tilde{{\bf v}}^{(2)},\tilde{Q}),\\
V_2(L_0,x_2)=0, \ \forall x_2\in (-1,1),\\
V_2(x_1,\pm 1)=0,\ \ \ \forall x_1\in [L_0,L_1],
\end{cases}
\end{eqnarray}
where
\be\no
&&\mathbb{F}(\tilde{{\bf v}}^{(1)},\tilde{{\bf v}}^{(2)}, \tilde{Q})=F_1(\tilde{{\bf v}}^{(1)},\tilde{Q})-F_1(\tilde{{\bf v}}^{(2)},\tilde{Q})-(k_{11}(\tilde{{\bf
v}}^{(1)},\tilde{Q})-k_{11}(\tilde{{\bf v}}^{(2)},\tilde{Q}))\p_{x_1} v_1^{(2)}\\\no
&&\quad-(k_{12}(\tilde{{\bf v}}^{(1)},\tilde{Q})-k_{12}(\tilde{{\bf v}}^{(2)},\tilde{Q}))(\p_{x_1} v_2^{(2)}+\p_{x_2} v_1^{(2)}).
\ee

To estimate $V_1$ and $V_2$, one can decompose $V_1$ and $V_2$ as
\be\no
V_1=-\p_{x_2} \psi_3+ \p_{x_1} \psi_4,\ \ \ V_2=\p_{x_1}\psi_3 + \p_{x_2}\psi_4,
\ee
where $\psi_3$ and $\psi_4$ solve the following boundary value problems respectively:
\be\no\begin{cases}
(\p_{x_1}^2+\p_{x_2}^2)\psi_3= F_2(\tilde{{\bf v}}^{(1)},\tilde{Q})-F_2(\tilde{{\bf v}}^{(2)},\tilde{Q}),\  \text{in }\Omega,\\
\p_{x_1}\psi_3(L_0,x_2)=\p_{x_1}\psi_3(L_1,x_2)=0,\ \ x_2\in (-1,1),\\
\psi_3(x_1,\pm 1)=0,\ \ \forall x_1\in [L_0,L_1],
\end{cases}\ee
and
\be\no\begin{cases}
k_{11}(\tilde{{\bf v}}^{(1)},\tilde{Q})\p_{x_1}^2 \psi_4+\p_{x_2}^2 \psi_4+ 2k_{12}(\tilde{{\bf v}}^{(1)},\tilde{Q})\p_{x_1x_2}^2 \psi_4+\bar{k}_1\p_{x_1}\psi_4\\
=\mathbb{F}(\tilde{{\bf v}}^{(1)},\tilde{{\bf v}}^{(2)},\tilde{Q})+(k_{11}(\tilde{{\bf v}}^{(1)},\tilde{Q})-1)\p_{12}^2\psi_3 - k_{12}(\tilde{{\bf
v}}^{(1)},\tilde{Q})(\p_{x_1}^2\psi_3-\p_{x_2}^2\psi_3)+\bar{k}_1 \p_{x_2} \psi_3,\ \text{in }\Omega,\\
\psi_4(L_0,x_2)=0,\ \ \forall x_2\in (-1,1),\\
\p_{x_2} \psi_4(x_1,\pm 1)=0,\ \ \forall x_1\in [L_0,L_1].
\end{cases}\ee
Then combining the $H^2(\Omega)$ estimate of $\psi_3$ and the $H^1(\Omega)$ estimate of $\psi_4$ leads to
\begin{align}\label{qqv1}
\|(V_1,V_2)\|_{L^2(\Omega)}&\leq m_*(\delta_0+\delta_1) \|(\tilde{V}_1,\tilde{V}_2)\|_{L^2(\Omega)}.
\end{align}
Choose $\delta_0=\sqrt{\epsilon+\delta_1}$ small enough such that
\begin{align}\no
\|(V_1,V_2)\|_{L^2(\Omega)}\leq \frac12 \|(\tilde{V}_1,\tilde{V}_2)\|_{L^2(\Omega)}
\end{align}
Then $\mathcal{T}^{\tilde{Q}}$ is a contractive mapping in $L^2(\Omega)$-norm and there exists a unique fixed point $\tilde{{\bf v}}\in \Sigma_1$ to
$\mathcal{T}^{\tilde{Q}}$. Therefore, $(\tilde{u}_1,\tilde{u}_2)=(\bar{u}+\tilde{v}_1,\tilde{v}_2)$ solves the following problem

\begin{eqnarray}\label{q374}
\begin{cases}
\frac{c^2(\tilde{\rho})-\tilde{u}_1^2}{c^2(\tilde{\rho})-\tilde{u}_2^2}\p_{x_1} \tilde{u}_1 -\frac{\tilde{u}_1 \tilde{u}_2}{c^2(\tilde{\rho})-\tilde{u}_2^2}(\p_{x_1} \tilde{u}_2+\p_{x_2} \tilde{u}_1)+\p_{x_2}
\tilde{u}_2+\frac{c^2(\tilde{\rho})\tilde{u}_1}{c^2(\tilde{\rho})-\tilde{u}_2^2} b(x_1)\\
=-\frac{1}{c^2(\tilde{\rho})-\tilde{u}_2^2}(\tilde{u}_1\p_{x_1} \tilde{Q}+ \tilde{u}_2 \p_{x_2} \tilde{Q}),\\
\p_{x_1} \tilde{u}_2 -\p_{x_2} \tilde{u}_1=-\frac{\p_{x_2} \tilde{Q}}{\tilde{u}_1},\\
\tilde{u}_2(L_0,x_2)= \epsilon h_1(x_2),\ \ x_2\in (-1,1),\\
\tilde{u}_2(x_1, \pm 1)=0,\ \ \forall x_1\in [L_0,L_1],.
\end{cases}
\end{eqnarray}

Note that when $\epsilon+\delta_1$ is suitably small, \eqref{q374} can be regarded as a uniformly first order elliptic system in $\Omega_{1/4}$. Since $\tilde{{\bf v}}\in
\Sigma_1$ and $\tilde{Q}\in \Sigma_2$, so the coefficients in \eqref{q374} belong to $H^3(\Omega)\subset C^{1,\alpha_1}(\overline{\Omega})$ for each $\alpha_1\in (0,1)$,
the terms on the right hand side of the equations in \eqref{q374} belong to $H^3(\Omega)$ and $\tilde{u}_2(L_0,\cdot)\in C^{3,\alpha}([-1,1])$. Thus by the standard interior and boundary regularity
estimates for elliptic systems, one can improve the regularity of $\tilde{{\bf u}}\in H^{4}(\Omega_{1/3})\subset C^{2,\alpha}(\overline{\Omega_{1/3}})$. This, together
with the assumption $\tilde{Q}\in C^{3,\alpha}(\overline{\Omega_{1/3}})$, implies that the terms on the right hand side of the equations in \eqref{q374} belong to
$C^{2,\alpha}(\overline{\Omega_{1/3}})$. The interior and boundary Schauder estimates to elliptic systems in $\Omega_{1/3}$ yield that $\tilde{{\bf u}}\in
C^{3,\alpha}(\overline{\Omega_{1/2}})$. In particular, $(\rho(\tilde{{\bf v}},\tilde{Q})(\bar{u}+\tilde{v}_1)(L_0,\cdot)\in C^{3,\alpha}([-1,1])$.

It follows from the first equation in \eqref{q374} that
\be\label{qdensity300}
\p_{x_1}(a(x_1)\rho(\tilde{{\bf v}},\tilde{Q})(\bar{u}+\tilde{v}_1))+\p_{x_2}(a(x_1)\rho(\tilde{{\bf v}},\tilde{Q})\tilde{v}_2)=0.
\ee
Therefore one may define a stream function on $[L_0,L_1]\times [-1,1]$ as
\begin{align}\no
\phi(x_1,x_2)=\int_{-1}^{x_2}a(L_0)(\rho(\tilde{{\bf v}},\tilde{Q})(\bar{u}+\tilde{v}_1))(L_0,\tau)d\tau-\int_{L_0}^{x_1}a(\tau)(\rho(\tilde{{\bf
v}},\tilde{Q})\tilde{v}_2)(\tau,x_2)d\tau.
\end{align}
The function $\phi$ has the properties
\begin{align}\no\begin{cases}
\p_{x_1}\phi(x_1,x_2)=-a(x_1)(\rho(\tilde{{\bf v}},\tilde{Q})\tilde{v}_2)(x_1,x_2)\in H^3(\Omega)\cap C^{2,\alpha}(\overline{\Omega_{1/3}})\cap
C^{3,\alpha}(\overline{\Omega_{1/2}}),\\
\p_{x_2}\phi(x_1,x_2)=a(x_1)\rho(\tilde{{\bf v}},\tilde{Q})(\bar{u}+\tilde{v}_1)(x_1,x_2)\in H^3(\Omega)\cap C^{2,\alpha}(\overline{\Omega_{1/3}})\cap
C^{3,\alpha}(\overline{\Omega_{1/2}}),\\
\phi(x_1,x_2)\in H^4(\Omega)\cap C^{3,\alpha}(\overline{\Omega_{1/3}})\cap C^{4,\alpha}(\overline{\Omega_{1/2}}).
\end{cases}
\end{align}
Since $\p_{x_1}\phi(x_1,\pm 1)=0$, so $\phi(x_1,-1)=\phi(L_0,-1)$ and $\phi(x_1,1)=\phi(L_0,1)$. Note that $\bar{u}(x_1)>0$ for every $x_1\in [L_0,L_1]$, and $\tilde{{\bf v}}\in \Sigma_1$,  $\p_{x_2}\phi(x_1,x_2)=a(x_1)\rho(\tilde{{\bf
v}},\tilde{Q})(\bar{u}+\tilde{v}_1)(x_1,x_2)>0$, then $\phi(x_1,x_2)$ is an strictly increasing function of $x_2$ for each fixed $x_1\in [L_0, L_1]$. These imply that the closed interval $[\phi(x_1,-1),\phi(x_1,1)]$ is simply equal to $[\phi(L_0,-1), \phi(L_0,1)]$. Denote the inverse function of $\phi(L_0,\cdot): [-1,1]\to [\phi(L_0,-1), \phi(L_0,1)]$ by
$\phi_{L_0}^{-1}(\cdot): [\phi(L_0,-1), \phi(L_0,1)]\to [-1,1]$. Define the function
\begin{align}\label{qQ}
Q(x_1,x_2)=\epsilon B_{in}(\phi_{L_0}^{-1}(\phi(x_1,x_2))).
\end{align}
Then one can easily verify that $Q$ solves the following transport problem
\begin{eqnarray}\label{q71}
\begin{cases}
a(x_1)\rho(\tilde{{\bf v}},\tilde{Q})((\bar{u}+\tilde{v}_1)\partial_{x_1} +\tilde{v}_2\partial_{x_2}) Q=0,\\
Q(L_0,x_2)=\epsilon B_{in}(\cdot)\in C^{4,\alpha}([-1,1]),\\
\end{cases}
\end{eqnarray}
Furthermore, due to $(\rho(\tilde{{\bf v}},\tilde{Q})(\bar{u}+\tilde{v}_1)(L_0,\cdot)\in C^{3,\alpha}([-1,1])$, there holds $\phi_{L_0}^{-1}(\cdot)\in
C^{4,\alpha}([\phi(L_0,-1), \phi(L_0,1)])$ and
\begin{align}\no
\|Q\|_{H^4(\Omega)}+\|Q\|_{C^{3,\alpha}(\overline{\Omega_{1/3}})}+\|Q\|_{C^{4,\alpha}(\overline{\Omega_{1/2}})}\leq m_*\epsilon.
\end{align}
Namely, for any $\tilde{Q}\in \Sigma_2$, we have constructed an operator $\mathcal{P}$: $\tilde{Q}\in \Sigma_2\mapsto Q\in \Sigma_2$ if one selects $\delta_1=\sqrt{\epsilon}$ and
$\sqrt{\epsilon}\leq \frac{1}{2m_*}$.

It remains to show that the mapping $\mathcal{P}$ is contractive in a low order norm for suitably small $\epsilon$. Let $Q^{(i)}=\mathcal{P}(\tilde{Q}^{(i)}) (i=1,2)$ for
any $\tilde{Q}^{(i)}\in \Sigma_2, (i=1,2)$ and denote $\tilde{Q}^{(1)}-\tilde{Q}^{(2)}$ by $\tilde{Q}_d$ and $Q^{(1)}-Q^{(2)}$ by $Q_d$, respectively. Then, it follows from \eqref{qQ} that
\begin{align}\no
Q^{(i)}(x_1,x_2)=\epsilon B_{in}\circ(\phi^{(i)}_{L_0})^{-1}\circ \phi^{(i)}(x_1,x_2),
\end{align}
where
\begin{align}\no
\phi^{(i)}(x_1,x_2)=\int_{-1}^{x_2}a(L_0)(\rho(\tilde{{\bf v}}^{(i)},\tilde{Q}^{(i)})(\bar{u}+\tilde{v}_1^{(i)}))(L_0,\tau)d\tau-\int_{L_0}^{x_1}a(\tau)(\rho(\tilde{{\bf
v}}^{(i)},\tilde{Q}^{(i)})\tilde{v}^{(i)}_2)(\tau,x_2)d\tau,
\end{align}
$(\phi_{L_0}^{(i)})^{-1}$: $t\in [\phi^{(i)}(L_0,-1), \phi^{(i)}(L_0,1)]\mapsto x_2\in [-1,1]$ is the inverse function of $\phi^{(i)}(L_0,\cdot)$: $x_2\in [-1,1]\mapsto t\in [\phi^{(i)}(L_0,-1), \phi^{(i)}(L_0,1)]$ and
$\tilde{{\bf v}}^{(i)}$ is the unique fixed point of $\mathcal{T}^{(\tilde{Q}^{(i)})}$ for $i=1,2$.
Thus,
\begin{align}\no
|Q_d|=|Q^{(1)}-Q^{(2)}|\leq\epsilon \|B'_{in}\|_{L^\infty([-1,1])}|\beta^{(1)}(x_1,x_2)-\beta^{(2)}(x_1,x_2)|
\end{align}
where $\beta^{(i)}(x_1,x_2)=(\phi^{(i)}_{L_0})^{-1}\circ \phi^{(i)}(x_1,x_2)\in[-1,1]$. It follows from the definitions that
\begin{align*}
&\int_{\beta^{(2)}(x_1,x_2)}^{\beta^{(1)}(x_1,x_2)}a(L_0)\rho(\tilde{{\bf v}}^{(1)},\tilde{Q}^{(1)})(\bar{u}+\tilde{v}_1^{(1)}))(L_0,\tau)d\tau
=\phi^{(1)}(x_1,x_2)-\phi^{(2)}(x_1,x_2)\\
&\quad\quad\quad-\int_{-1}^{\beta^{(2)}(x_1,x_2)}a(L_0)\{\rho(\tilde{{\bf v}}^{(1)},\tilde{Q}^{(1)})(\bar{u}+\tilde{v}_1^{(1)})-
\rho(\tilde{{\bf v}}^{(2)},\tilde{Q}^{(2)})(\bar{u}+\tilde{v}_1^{(2)})\}(L_0,\tau)d\tau,
\end{align*}
which implies
\begin{align*}
&\underline{m}^{(1)}|\beta^{(1)}(x_1,x_2)-\beta^{(2)}(x_1,x_2)|\\
\leq& |\phi^{(1)}(x_1,x_2)-\phi^{(2)}(x_1,x_2)|+\int_{-1}^{1}a(L_0)|\rho(\tilde{{\bf v}}^{(1)},\tilde{Q}^{(1)})(\bar{u}+\tilde{v}_1^{(1)}))-
\rho(\tilde{{\bf v}}^{(2)},\tilde{Q}^{(2)})(\bar{u}+\tilde{v}_1^{(2)}))|(L_0,\tau)d\tau
\end{align*}
with $\underline{m}^{(i)}:=\displaystyle\min_{x_2\in[-1,1]}a(L_0)(\rho(\tilde{{\bf v}}^{(i)},\tilde{Q}^{(i)})(\bar{u}+\tilde{v}_1^{(i)}))(L_0,x_2)>0$. Noting that
$Q_d(L_0,x_2)\equiv 0$, one has
\begin{align}\no
\|Q_d\|_{L^2(\Omega)}\leq m_*\epsilon\bigg(\|(\tilde{{\bf v}}^{(1)}-\tilde{{\bf v}}^{(2)},\tilde{Q}_d)\|_{L^2(\Omega)}+\|(\tilde{{\bf v}}^{(1)}-\tilde{{\bf
v}}^{(2)})(L_0,\cdot)\|_{L^2([-1,1])}\bigg).
\end{align}
Also there holds
\begin{align*}
|\p_{x_1} Q_d|=&\epsilon|B_{in}'(\beta^{(1)}(x_1,x_2))\p_{x_1} \beta^{(1)}(x_1,x_2)-B_{in}'(\beta^{(2)}(x_1,x_2))\p_{x_1} \beta^{(2)}(x_1,x_2)|\\\no
=&\epsilon|\big(B_{in}'(\beta^{(1)}(x_1,x_2))-B_{in}'(\beta^{(2)}(x_1,x_2))\big)\p_{x_1} \beta^{(1)}+ B_{in}'(\beta^{(2)}(x_1,x_2))(\p_{x_1} \beta^{(1)}-\p_{x_1}
\beta^{(2)})|\\\no
\leq&\epsilon \|B''_{in}\|_{L^\infty([-1,1])}|\beta^{(1)}(x_1,x_2)-\beta^{(2)}(x_1,x_2)|\frac{1}{\underline{m}^{(1)}}\|\nabla
\phi^{(1)}(x_1,x_2)\|_{L^\infty(\Omega)}\\\no
&+\epsilon \|B'_{in}\|_{L^\infty([-1,1])}\frac{\|\nabla
\phi^{(1)}(x_1,x_2)\|_{L^\infty(\Omega)}}{\underline{m}^{(1)}\underline{m}^{(2)}}\bigg|(\rho(\tilde{{\bf
v}}^{(1)},\tilde{Q}^{(1)})(\bar{u}+\tilde{v}_1^{(1)}))(L_0,\beta^{(1)})\\
&\quad-(\rho(\tilde{{\bf v}}^{(2)},\tilde{Q}^{(2)})(\bar{u}+\tilde{v}_1^{(2)}))(L_0,\beta^{(2)})\bigg|\\
&+\epsilon \|B'_{in}\|_{L^\infty([-1,1])}\frac{1}{\underline{m}^{(2)}} \bigg|\nabla \phi^{(1)}(x_1,x_2)-\nabla \phi^{(2)}(x_1,x_2)\bigg|,
\end{align*}
and similar computations are valid for $\p_{x_2} Q_d$. Then one has
\begin{align}\no
\|\nabla Q_d\|_{L^2(\Omega)}\leq m_*\epsilon\bigg(\|(\tilde{{\bf v}}^{(1)}-\tilde{{\bf v}}^{(2)}\|_{L^2(\Omega)}+\|\tilde{Q}_d)\|_{H^1(\Omega)}+\|(\tilde{{\bf v}}^{(1)}-\tilde{{\bf
v}}^{(2)})(L_0,\cdot)\|_{L^2([-1,1])}\bigg).
\end{align}

Therefore,
\begin{align}\label{qdd10}
\|Q_d\|_{H^1(\Omega)}\leq m_*\epsilon\bigg(\|(\tilde{{\bf v}}^{(1)}-\tilde{{\bf v}}^{(2)},\tilde{Q}_d)\|_{L^2(\Omega)}+\|(\tilde{{\bf v}}^{(1)}-\tilde{{\bf
v}}^{(2)})(L_0,\cdot)\|_{L^2([-1,1])}\bigg).
\end{align}

It remains to show
\begin{eqnarray}\label{qdd11}
\|(\tilde{{\bf v}}^{(1)}-\tilde{{\bf v}}^{(2)})\|_{L^2(\Omega)}+\|(\tilde{{\bf v}}^{(1)}-\tilde{{\bf v}}^{(2)})(L_0,\cdot)\|_{L^2([-1,1])}\leq
m_*\|\tilde{Q}_d\|_{H^1(\Omega)}.
\end{eqnarray}

Indeed, set $\tilde{v}_j^{(1)}-\tilde{v}_j^{(2)}$ by $Y_{j}$ for $j=1,2$ respectively. It follows from \eqref{q72} that
\begin{eqnarray}\label{qcontract}
\begin{cases}
k_{11}(\tilde{{\bf v}}^{(1)},\tilde{Q}^{(1)})\p_{x_1} Y_{1}+\p_{x_2} Y_{2}
+k_{12}(\tilde{{\bf v}}^{(1)},\tilde{Q}^{(1)})(\p_{x_1} Y_{2}+\p_{x_2} Y_{1})+\bar{k}_1(x_1)Y_{1}=R_1,\\
\p_{x_1} Y_{2}-\p_{x_2} Y_1=R_2,\\
Y_{2}(L_0,x_2)=0,\\
Y_{2}(x_1,\pm 1)=0,
\end{cases}
\end{eqnarray}

where $R_1$ and $R_2$ are given by
\be\no
R_1&=& F_1(\tilde{{\bf v}}^{(1)},\tilde{Q}^{(1)})-F_1(\tilde{{\bf v}}^{(2)},\tilde{Q}^{(2)})-(k_{11}(\tilde{{\bf v}}^{(1)},\tilde{Q}^{(1)})-k_{11}(\tilde{{\bf
v}}^{(2)},\tilde{Q}^{(2)}))\p_{x_1} \tilde{v}_1^{(2)}\\\no
&\quad&\quad-(k_{12}(\tilde{{\bf v}}^{(1)},\tilde{Q}^{(1)})-k_{12}(\tilde{{\bf v}}^{(2)},\tilde{Q}^{(2)}))(\p_{x_1} \tilde{v}_2^{(2)}+\p_{x_2} \tilde{v}_1^{(2)}),\\\no
R_2&=& -\bigg(\frac{\p_{x_2} \tilde{Q}^{(1)}}{\bar{u}+\tilde{v}_1^{(1)}}-\frac{\p_{x_2} \tilde{Q}^{(2)}}{\bar{u}+\tilde{v}_1^{(2)}}\bigg).
\ee
There holds
\begin{align}\no
\begin{cases}
\|R_1\|_{L^2(\Omega)}\leq m_*\bigg(\|\tilde{Q}_d\|_{H^1(\Omega)}+\delta_0\|(\tilde{{\bf v}}^{(1)}-\tilde{{\bf v}}^{(2)})\|_{L^2(\Omega)}\bigg),\\
\|R_2\|_{L^2(\Omega)}\leq m_*\bigg(\|\tilde{Q}_d\|_{H^1(\Omega)}+\delta_1\|(\tilde{{\bf v}}^{(1)}-\tilde{{\bf v}}^{(2)})\|_{L^2(\Omega)}\bigg).
\end{cases}
\end{align}

Similar arguments as for \eqref{qqv1} yield
\be\no
\|(\tilde{{\bf v}}^{(1)}-\tilde{{\bf v}}^{(2)})\|_{L^2(\Omega)}\leq m_*\bigg(\|\tilde{Q}_d\|_{H^1(\Omega)}+(\delta_0+\delta_1)\|(\tilde{{\bf v}}^{(1)}-\tilde{{\bf
v}}^{(2)})\|_{L^2(\Omega)}\bigg).
\ee
Since \eqref{qcontract} is uniformly elliptic in $\overline{\Omega_{1/3}}$, the interior and boundary $H^1$ estimates for elliptic systems yield that
\be\no
\|(\tilde{{\bf v}}^{(1)}-\tilde{{\bf v}}^{(2)})\|_{H^1(\overline{\Omega_{1/2}})}\leq m_*\bigg(\|\tilde{Q}_d\|_{H^1(\Omega)}+(\delta_0+\delta_1)\|(\tilde{{\bf
v}}^{(1)}-\tilde{{\bf v}}^{(2)})\|_{L^2(\Omega)}\bigg),
\ee
which further implies, by the trace Theorem, that
\be\no
\|(\tilde{{\bf v}}^{(1)}-\tilde{{\bf v}}^{(2)})(L_0,\cdot)\|_{L^2([-1,1])}\leq m_*\bigg(\|(\tilde{Q}_d\|_{H^1(\Omega)}+(\delta_0+\delta_1)\|(\tilde{{\bf
v}}^{(1)}-\tilde{{\bf v}}^{(2)})\|_{L^2(\Omega)}\bigg).
\ee
Choosing $\delta_0+\delta_1=\sqrt{\epsilon+\delta_1}+\delta_1=\sqrt{\epsilon+\sqrt{\epsilon}}+\sqrt{\epsilon}$ small enough such that $m_*(\delta_0+\delta_1)<1/2$, one obtains
\eqref{qdd11}.

Combining \eqref{qdd10} and \eqref{qdd11}, we obtain finally that
\begin{align}\no
\|Q_d\|_{H^1(\Omega)}&\leq m_*\epsilon \|\tilde{Q}_d\|_{H^1(\Omega)}\leq \frac12 \|\tilde{Q}_d\|_{H^1(\Omega)},
\end{align}
provided that $0<\epsilon \leq \frac{1}{2m_*}$. Hence $\mathcal{P}$ is a contractive mapping in $H^1(\Omega)$-norm and there exists a unique fixed point $Q\in \Sigma_2$.
Denote the fixed point of the mapping $\mathcal{T}^{Q}$ in $\Sigma_1$ by ${\bf v}$. Then $({\bf v}, Q)$ is a solution to the quasi 2-D steady Euler flow model
\eqref{q2deuler} with boundary conditions \eqref{qbcs}, which also satisfies the estimate \eqref{q2d10}. The uniqueness can be proved by a similar argument as for the
contraction of the two mappings $\mathcal{T}^{Q}$ and $\mathcal{P}$. The properties of the sonic curve can be proved as in Theorem \ref{q-irro}. The
proof of Theorem \ref{2dmain} is completed.

{\bf Acknowledgement.} Weng is supported in part by National Natural Science Foundation of China 11971307, 12071359, 12221001. Xin is supported in part by the Zheng Ge Ru Foundation, Hong Kong RGC Earmarked Research Grants CUHK-14301421, CUHK-14300917, CUHK-14302819 and CUHK-14300819, the key project of NSFC (Grant No.12131010) and by Guangdong Basic and Applied Basic Research Foundation 2020B1515310002.

\end{document}